\newcommand{\numberseries}{\mdseries}   %Fontseries used for numbering
\newlength{\thmtopspace}                %Space above theorem
\newlength{\thmbotspace}                %Space below theorem
\newlength{\thmheadspace}               %Space after theorem label
\newlength{\thmindent}                  %For indenting
\renewcommand{\subparagraph}{\vspace*{\thmbotspace}}
\newtheoremstyle{bfupright head,slanted body}
                {\thmtopspace}{\thmbotspace}
                {\slshape}{\thmindent}{\bfseries}{.}{\thmheadspace}
                {{\numberseries \thmnumber{(#2) }}\thmnote{#3}}
\newtheoremstyle{bfupright head,upright body}
                {\thmtopspace}{\thmbotspace}
                {\upshape}{\thmindent}{\bfseries}{.}{\thmheadspace}
                {{\numberseries \thmnumber{(#2) }}\thmnote{#3}}
\newtheoremstyle{bfit head,upright body}
                {\thmtopspace}{\thmbotspace}
                {\upshape}{\thmindent}{\upshape}{.}{\thmheadspace}
                {{\numberseries\thmnumber{(#2) }}
                {\bfseries\itshape\thmnote{\negthickspace#3}}}
\newtheoremstyle{it head,upright body}
                {\thmtopspace}{\thmbotspace}
                {\upshape}{\thmindent}{\upshape}{.}{\thmheadspace}
                {{\numberseries\thmnumber{(#2) }}
                {\itshape\thmnote{\negthickspace#3}}}
\newtheoremstyle{fixed bf head,slanted body}
                {\thmtopspace}{\thmbotspace}{\slshape}
                {\thmindent}{\bfseries}{.}{\thmheadspace}
                {{\numberseries \thmnumber{(#2) }}\thmname{#1}\thmnote{ (#3)}}
\newtheoremstyle{fixed bf head,upright body}
                {\thmtopspace}{\thmbotspace}{\upshape}
                {\thmindent}{\bfseries}{.}{\thmheadspace}
                {{\numberseries \thmnumber{(#2) }}\thmname{#1}\thmnote{ (#3)}}
\newtheoremstyle{fixed bfit head,upright body}
                {\thmtopspace}{\thmbotspace}{\upshape}
                {\thmindent}{\bfseries\itshape}{.}{\thmheadspace}
                {{\numberseries \thmnumber{(#2) }}\thmname{#1}\thmnote{ (#3)}}
\newtheoremstyle{sc head,small body}
                {\thmtopspace}{\thmbotspace}
                {\small\upshape}{\thmindent}{\scshape}{.}{\thmheadspace}
                {\thmname{#1}}
\newtheoremstyle{numbered paragraph}
                {\thmtopspace}{\thmbotspace}{\upshape}
                {\thmindent}{\upshape}{}{0pt}
                {{\numberseries \thmnumber{(#2) }}}
\newtheoremstyle{unnumbered paragraph}
                {\thmtopspace}{\thmbotspace}{\upshape}
                {\parindent}{\upshape}{}{0pt}
\theoremstyle{bfupright head,slanted body}
\newtheorem{res}{}[section]             \newtheorem*{res*}{}
\theoremstyle{bfit head,upright body}
                 \newtheorem*{com*}{}
\theoremstyle{bfupright head,upright body}
\newtheorem{bfhpg}[res]{}               \newtheorem*{bfhpg*}{}
\theoremstyle{it head,upright body}
               \newtheorem*{ithpg*}{}
\theoremstyle{sc head,small body}
\theoremstyle{fixed bf head,slanted body}
\newtheorem{thm}[res]{Theorem}          \newtheorem*{thm*}{Theorem}
\newtheorem{prp}[res]{Proposition}      \newtheorem*{prp*}{Proposition}
\newtheorem{cor}[res]{Corollary}        \newtheorem*{cor*}{Corollary}
\newtheorem{lem}[res]{Lemma}            \newtheorem*{lem*}{Lemma}
\theoremstyle{fixed bf head,upright body}
\newtheorem{dfn}[res]{Definition}       \newtheorem*{dfn*}{Definition}
\newtheorem{con}[res]{Construction}     \newtheorem*{con*}{Construction}
\newtheorem{obs}[res]{Observation}      \newtheorem*{obs*}{Observation}
\newtheorem{rmk}[res]{Remark}           \newtheorem*{rmk*}{Remark}
\newtheorem{exa}[res]{Example}          \newtheorem*{exa*}{Example}
         \newtheorem*{exe*}{Exercise}
\newtheorem{stp}[res]{Setup}            \newtheorem{stp*}{Setup}
         \newtheorem*{qst*}{Question}
\theoremstyle{numbered paragraph}
\newtheorem{ipg}[res]{}
\theoremstyle{unnumbered paragraph}
\newtheorem{ipg*}{}
\newlength{\thmlistleft}        %leftmargin
\newlength{\thmlistright}       %rightmargin
\newlength{\thmlistpartopsep}   %partopsep
\newlength{\thmlisttopsep}      %topsep
\newlength{\thmlistparsep}      %parsep
\newlength{\thmlistitemsep}     %itemsep
\newcounter{eqc} 
  {\end{list}}%
\newcounter{prt}
\newenvironment{prt}{\begin{list}{\upshape (\alph{prt})}%
    {\usecounter{prt}%
      \setlength{\leftmargin}{\thmlistleft}%
      \setlength{\labelwidth}{\thmlistleft}%
      \setlength{\rightmargin}{\thmlistright}%
      \setlength{\partopsep}{\thmlistpartopsep}%
      \setlength{\topsep}{\thmlisttopsep}%
      \setlength{\parsep}{\thmlistparsep}%
      \setlength{\itemsep}{\thmlistitemsep}}}%
  {\end{list}}%
\newcounter{rqm}
\newenvironment{rqm}{\begin{list}{\upshape (\arabic{rqm})}%
    {\usecounter{rqm}%
      \setlength{\leftmargin}{\thmlistleft}%
      \setlength{\labelwidth}{\thmlistleft}%
      \setlength{\rightmargin}{\thmlistright}%
      \setlength{\partopsep}{\thmlistpartopsep}%
      \setlength{\topsep}{\thmlisttopsep}%
      \setlength{\parsep}{\thmlistparsep}%
      \setlength{\itemsep}{\thmlistitemsep}}}%
  {\end{list}}%
\newenvironment{itemlist}{\nopagebreak \begin{list}{$\bullet$}%
    {\setlength{\leftmargin}{\thmlistleft}%
      \setlength{\labelwidth}{\thmlistleft}%
      \setlength{\rightmargin}{\thmlistright}%
      \setlength{\partopsep}{\thmlistpartopsep}%
      \setlength{\topsep}{\thmlisttopsep}%
      \setlength{\parsep}{\thmlistparsep}%
      \setlength{\itemsep}{\thmlistitemsep}}}%
  {\end{list}}%
\newenvironment{prf*}[1][Proof]{%
  \begin{proof}[\bf #1]
    \setcounter{equation}{0}
    \renewcommand{\theequation}{\arabic{equation}}}
  {\end{proof}
}
  \newcommand{\proofof}[2][:]{``#2''#1}
\newcommand{\pgref}[1]{(\ref{#1})}
\newcommand{\resref}[2][]{#1\pgref{res:#2}}
\newcommand{\stpref}[2][Setup~]{#1\pgref{stp:#2}}
\newcommand{\thmref}[2][Theorem~]{#1\pgref{thm:#2}}
\newcommand{\corref}[2][Corollary~]{#1\pgref{cor:#2}}
\newcommand{\prpref}[2][Proposition~]{#1\pgref{prp:#2}}
\newcommand{\lemref}[2][Lemma~]{#1\pgref{lem:#2}}
\newcommand{\obsref}[2][Observation~]{#1\pgref{obs:#2}}
\newcommand{\conref}[2][Construction~]{#1\pgref{con:#2}}
\newcommand{\dfnref}[2][Definition~]{#1\pgref{dfn:#2}}
\newcommand{\exaref}[2][Example~]{#1\pgref{exa:#2}}
\newcommand{\rmkref}[2][Remark~]{#1\pgref{rmk:#2}}
\newcommand{\secref}[2][Section~]{#1\ref{sec:#2}}
\renewcommand{\eqref}[1]{\pgref{eq:#1}}
\newcommand{\lemcite}[2][?]{\cite[lem.~#1]{#2}}
\def\@nobreak@#1{\mathchoice%
  {\nobreakdef@\displaystyle\f@size{#1}}%
  {\nobreakdef@\nobreakstyle\tf@size{\firstchoice@false #1}}%
  {\nobreakdef@\nobreakstyle\sf@size{\firstchoice@false #1}}%
  {\nobreakdef@\nobreakstyle\ssf@size{\firstchoice@false #1}}%
  \check@mathfonts}%
\def\nobreakdef@#1#2#3{\hbox{{%
                    \everymath{#1}%
                    \let\f@size#2\selectfont%
                    #3}}}%
\numberwithin{equation}{res}
\newcommand{\cA}{\mathcal{A}}
\newcommand{\cB}{\mathcal{B}}
\newcommand{\cC}{\mathcal{C}}
\newcommand{\cM}{\mathcal{M}}
\newcommand{\m}{\mathfrak{m}}
\newcommand{\B}{\mathrm{B}}
\newcommand{\G}{\mathrm{G}}
\newcommand{\K}{\mathrm{K}}
\newcommand{\M}{\mathrm{M}}
\newcommand{\Q}{\mathrm{Q}}
\newcommand{\add}{\mathsf{add}}
\newcommand{\Ab}{\mathsf{Ab}}
\newcommand{\MCM}{\mathsf{MCM}}
\newcommand{\proj}{\mathsf{proj}}
\newcommand{\Mod}{\mathsf{Mod}}
\newcommand{\Aut}{\operatorname{Aut}}
\newcommand{\GL}{\mathrm{GL}}
\newcommand{\SL}{\mathrm{SL}}
\newcommand{\End}{\operatorname{End}}
\newcommand{\Hom}{\operatorname{Hom}}
\newcommand{\Ker}{\operatorname{Ker}}
\newcommand{\Coker}{\operatorname{Coker}}
\newcommand{\J}{\mathrm{J}}
\renewcommand{\Im}{\operatorname{Im}}
\renewcommand{\mod}{\mathsf{mod}}
\newcommand{\new}[1]{#1}
\def\widebardisplay#1{%
  \setbox0=\hbox{$\displaystyle #1$}
  \dimen0=\wd0%
  \advance\dimen0 by -4pt% Antallet af punkter, linjen skal v�re
                         % kortere.
  \vbox{%
    \nointerlineskip%
    \moveright 2pt % Antallet af punkter, linjen forrykkes mod h�jre.
    \vbox{\hrule width \dimen0}%
    \nointerlineskip%
    \kern 1pt% Den lodrette afstand mellem linjen og symbolerne.
    \box0%
    }%
  }
\def\widebartext#1{%
  \setbox0=\hbox{$#1$}
  \dimen0=\wd0%
  \advance\dimen0 by -4pt% Antallet af punkter, linjen skal v�re
                         % kortere.
  \vbox{%
    \nointerlineskip%
    \moveright 2pt % Antallet af punkter, linjen forrykkes mod h�jre.
    \vbox{\hrule width \dimen0}%
    \nointerlineskip%
    \kern 1pt% Den lodrette afstand mellem linjen og symbolerne.
    \box0%
    }%
  }
\def\widebarscript#1{%
  \setbox0=\hbox{$\scriptstyle #1$}
  \dimen0=\wd0%
  \advance\dimen0 by -3pt% Antallet af punkter, linjen skal v�re
                         % kortere.
  \vbox{%
    \nointerlineskip%
    \moveright 1.5pt % Antallet af punkter, linjen forrykkes mod h�jre.
    \vbox{\hrule width \dimen0}%
    \nointerlineskip%
    \kern .8pt% Den lodrette afstand mellem linjen og symbolerne.
    \box0%
    }%
  }
\def\widebarscriptscript#1{%
  \setbox0=\hbox{$\scriptscriptstyle #1$}
  \dimen0=\wd0%
  \advance\dimen0 by -2pt% Antallet af punkter, linjen skal v�re
                         % kortere.
  \vbox{%
    \nointerlineskip%
    \moveright 1pt % Antallet af punkter, linjen forrykkes mod h�jre.
    \vbox{\hrule width \dimen0}%
    \nointerlineskip%
    \kern .6pt% Den lodrette afstand mellem linjen og symbolerne.
    \box0%
    }%
  }
\begin{document}

\title{K-groups for rings of finite Cohen--Macaulay type}

\author{Henrik Holm}

\address{%
  Department of Mathematical Sciences\\
  Faculty of Science\\
  University of Copenhagen\\  
  Universitetsparken 5\\
  2100 Copenhagen {\O}\\
  Denmark}

\email{holm@math.ku.dk}
\urladdr{http://www.math.ku.dk/\~{}holm/} 

\keywords{Auslander--Reiten sequence; Bass' universal determinant
  group; finite Cohen--Macaulay type; maximal Cohen--Macaulay module;
  Quillen's $\K$-theory}

\subjclass[2010]{13C14, 13D15, 19B28}
%13C14   Cohen-Macaulay modules  
%13D15   Grothendieck groups, $K$-theory 
%19B28   $K_1$ of group rings and orders  

\begin{abstract}
  For a local Cohen--Macaulay ring $R$ of finite CM-type, Yoshino has
  applied methods of Auslander and Reiten to compute the
  Gro\-then\-dieck group $\K_0$ of the category $\mod\,R$ of finitely
  generated $R$-modules. For the same type of rings, we compute in
  this paper the first Quillen $\K$-group $\K_1(\mod\,R)$.  We also
  describe the group homomorphism \mbox{$R^* \to \K_1(\mod\,R)$}
  induced by the inclusion functor \mbox{$\proj\,R \to \mod\,R$} and
  illustrate our results with concrete examples.
\end{abstract}

\maketitle

\section{Introduction}
\label{sec:Introduction}

Throughout this introduction, $R$ denotes a commutative noetherian
local Cohen--Macaulay ring.  The lower $\K$-groups of $R$ are known:
\mbox{$\K_0(R) \cong \mathbb{Z}$} and \mbox{$\K_1(R) \cong R^*$}. For
\mbox{$n \in \{0,1\}$} the classical $\K$-group $\K_n(R)$ of the ring
coincides with Quillen's $\K$-group $\K_n(\proj\,R)$ of the exact
category of finitely generated projective $R$-modules; and if $R$ is
regular, then Quillen's resolution theorem shows that the inclusion
functor $\proj\,R \to \mod\,R$ induces an isomorphism $\K_n(\proj\,R)
\cong \K_n(\mod\,R)$. If $R$ is non-regular, then these groups are
usually not isomorphic. The groups $\K_n(\mod\,R)$ are often denoted
$\G_n(R)$ and they are classical objects of study called the
$\G$-theory of $R$.  A celebrated result of Quillen is that
$\G$-theory is well-behaved under (Laurent) polynomial extensions:
$\G_n(R[t]) \cong \G_n(R)$ and $\G_n(R[t,t^{-1}]) \cong \G_n(R) \oplus
\G_{n-1}(R)$.

Auslander and Reiten \cite{MR816889} and Butler \cite{Butler} computed
$\K_0(\mod\,\Lambda)$ for an Artin algebra $\Lambda$ of finite
representation type.  Using similar techniques, Yoshino \cite{yos}
computed $\K_0(\mod\,R)$ in the case where $R$ has finite (as opposed
to tame or wild) CM-type:

\begin{thm*}[{Yoshino \cite[\bf thm.~(13.7)]{yos}}]
  \label{thm:0}
  Assume that $R$ is henselian and that it has a dualizing module. If
  $R$ has finite CM-type, then there is a group isomorphism,
  \begin{displaymath}
    \K_0(\mod\,R) \cong \Coker \Upsilon\;,  
  \end{displaymath}
  where \mbox{$\Upsilon \colon \mathbb{Z}^t \to \mathbb{Z}^{t+1}$} is
  the Auslander--Reiten homomorphism from \dfnref[]{AR-matrix}.
\end{thm*}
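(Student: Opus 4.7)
The plan is to adapt the Auslander--Reiten--Butler strategy used for Artin algebras of finite representation type, as Yoshino has done for CM local rings. The argument proceeds in three stages.

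\emph{Stage 1 (generators).} Since $R$ is Cohen--Macaulay with a dualizing module, Auslander--Buchweitz theory supplies for every finitely generated $R$-module $M$ a short exact sequence $0 \to Y \to X \to M \to 0$ with $X \in \MCM\,R$ and $\mathrm{pd}_R Y < \infty$. This gives $[M] = [X] - [Y]$ in $\K_0(\mod\,R)$, and a finite free resolution of $Y$ expresses $[Y]$ as an alternating sum of classes of free modules. Hence $\K_0(\mod\,R)$ is generated by classes of MCM modules. Henselianness yields Krull--Schmidt on $\MCM\,R$, and the finite CM-type hypothesis provides exactly $t+1$ indecomposables $R = M_0, M_1, \ldots, M_t$, producing a surjection $\pi \colon \mathbb{Z}^{t+1} \twoheadrightarrow \K_0(\mod\,R)$ sending $e_i \mapsto [M_i]$.

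\emph{Stage 2 ($\Im\Upsilon \subseteq \Ker\pi$).} For each $1 \le i \le t$ there is an Auslander--Reiten sequence $0 \to \tau M_i \to E_i \to M_i \to 0$ in $\MCM\,R$, giving a relation $[\tau M_i] - [E_i] + [M_i] = 0$ in $\K_0(\mod\,R)$ whose coordinate vector with respect to $[M_0], \ldots, [M_t]$ is the $i$-th column of $\Upsilon$. Thus $\pi$ factors through $\Coker\Upsilon$, yielding a surjection $\Coker\Upsilon \twoheadrightarrow \K_0(\mod\,R)$.

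\emph{Stage 3 ($\Ker\pi \subseteq \Im\Upsilon$).} This is the main obstacle. After using Stage~1 to rewrite relations in terms of MCM modules, it suffices to show that every short exact sequence $0 \to A \to B \to C \to 0$ in $\MCM\,R$ contributes a class lying in $\Im\Upsilon$. By Krull--Schmidt I would reduce to $C$ indecomposable; if $C$ is free or the sequence splits, the relation is trivial. Otherwise, the defining lifting property of the AR-sequence $0 \to \tau C \to E_C \to C \to 0$ gives a morphism $B \to E_C$ covering $B \to C$. Forming the pullback $B \times_C E_C$ and using this lift to split the projection onto $B$, one finds $B \times_C E_C \cong B \oplus \tau C$, and the other projection yields an exact sequence $0 \to A \to B \oplus \tau C \to E_C \to 0$. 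Modulo the AR-relation for $C$, the original relation equals this new one, which now ends at $E_C$; choosing a well-founded ordering on indecomposable MCMs in which every summand of $E_C$ precedes $C$ (available in the finite CM-type setting from the structure of the AR-quiver), induction finishes. The principal difficulty is precisely this well-foundedness, whose verification requires delicate bookkeeping on the AR-quiver of $\MCM\,R$; a further subtlety already present in Stage~1 is that $\MCM\,R$ is not closed under kernels of surjections in $\mod\,R$, so Quillen's resolution theorem does not yield $\K_0(\MCM\,R) \cong \K_0(\mod\,R)$ directly, and one must work at the level of generators and relations of $\K_0$ rather than through a categorical equivalence.
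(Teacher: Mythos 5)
Your Stages 1 and 2 are fine, and a Butler/Auslander--Reiten-style generators-and-relations argument is a legitimate route here, but Stage 3 has a genuine gap, and Stage 1 contains a misstatement. On the latter: $\MCM\,R$ \emph{is} closed under kernels of epimorphisms between its objects (the depth lemma; the paper invokes exactly this when checking the hypotheses of Bass' resolution theorem in Section 8), so Quillen's resolution theorem does apply and gives $\K_0(\MCM\,R)\cong\K_0(\mod\,R)$ directly --- a simplification, not a further subtlety.

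The real problem is that the well-founded order you posit in Stage 3, in which every indecomposable summand of the middle term of the AR sequence ending in $C$ strictly precedes $C$, does not exist: Auslander--Reiten quivers of CM-finite rings contain $2$-cycles. In the paper's example $R=\mathbb{C}[[X,Y,Z]]/(X^3+Y^4+Z^2)$ the AR sequence ending in $M_1$ has middle term $M_2$, while the one ending in $M_2$ has $M_1$ as a summand of its middle term $M_1\oplus M_3$, forcing both $M_2<M_1$ and $M_1<M_2$ (the pair $M_4,M_5$ gives another $2$-cycle). Your induction therefore has no measure to descend on. The correct replacement --- which is what underlies Yoshino's cited proof and the machinery the paper assembles in Sections 4 and 7 --- is to pass to the functor category $\mathcal{Y}$ of finitely presented functors on $\MCM\,R$ vanishing on $\proj\,R$. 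A short exact sequence $0\to A\to B\to C\to 0$ in $\MCM\,R$ gives a functor $F=\Coker\bigl(\Hom_R(-,B)|_{\MCM\,R}\to\Hom_R(-,C)|_{\MCM\,R}\bigr)$ lying in $\mathcal{Y}$; Yoshino's results recorded in Section 7 show that $\mathcal{Y}$ is a length category whose simple objects are exactly the functors $F_j$ attached to the AR sequences, so a composition series of $F$ exhibits $[A]-[B]+[C]$ as an integer combination of the AR relations, i.e.\ as an element of $\Im\Upsilon$. The finiteness your argument needs is the length of $F$ in $\mathcal{Y}$, not an ordering of the indecomposable MCM modules. Equivalently, one applies $\K_0$ and Quillen's localization theorem to the Gabriel sequence $\mathcal{Y}\to\mod(\MCM\,R)\to\mod(\proj\,R)$ and uses the paper's identification of $\K_0(i)$ with $\Upsilon$.
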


We mention that Yoshino's result is as much a contribution to algebraic $\K$-theory as
it is to the representation theory of the category $\MCM\,R$ of
maximal Cohen--Macaulay $R$-modules. Indeed, the inclusion functor \mbox{$\MCM\,R \to \mod\,R$} induces an
isomorphism $\K_n(\MCM\,R) \cong \K_n(\mod\,R)$ for every $n$. The theory of maximal Cohen--Macaulay modules, which originates from algebraic geometry and integral representations of finite groups, is a highly active area of research.

In this paper, we build upon results and techniques of Auslander and
Reiten \cite{MR816889}, Bass \cite{MR0249491}, Lam \cite{MR1838439},
Leuschke \cite{GJL07}, Quillen \cite{MR0338129}, Vaserstein
\cite{MR0267009, MR2111217}, and Yoshino \cite{yos} to compute the
group $\K_1(\mod\,R)$ when $R$ has finite CM-type.  Our main result is
\thmref{1}; it asserts that there is an isomorphism,
\begin{displaymath}
  \K_1(\mod\,R) \cong \Aut_R(M)_\mathrm{ab}/\Xi\;,
\end{displaymath}
where $M$ is any representation generator of the category of maximal
Cohen--Macau\-lay $R$-modules and $\Aut_R(M)_\mathrm{ab}$ is the
abelianization of its automorphism group. The subgroup $\Xi$ is more
complicated to describe; it is determined by the Auslander--Reiten
sequences and defined in \dfnref[]{Xi}. Observe that in contrast to $\K_0(\mod\,R)$, the group $\K_1(\mod\,R)$ is usually not finitely generated.

We also prove that if one
writes \mbox{$M = R \oplus M'$}, then the group homomorphism
$R^* \cong \K_1(\proj\,R) \to \K_1(\mod\,R)$ induced by the inclusion
functor \mbox{$\proj\,R \to \mod\,R$} can be identified with the map
\begin{displaymath}
  \lambda \colon R^* \longrightarrow \Aut_R(M)_{\mathrm{ab}}/\Xi
    \qquad \text{given by} \qquad
    r \longmapsto
    \begin{pmatrix}
      r1_R & 0 \\
      0 & 1_{M'}
    \end{pmatrix}.
\end{displaymath}

The paper is organized as follows: In \secref{main} we formulate our
main result, \thmref{1}. This theorem is not proved until
\secref{proof}, and the intermediate \secref[Sections~]{K} (on the Gersten--Sherman transformation), \secref[]{coherent} (on
Auslander's and Reiten's theory for coherent pairs),
\secref[]{semilocal} (on Vaserstein's result for semilocal rings),
\secref[]{functors} (on certain equivalences of categories), and
\secref[]{Y} (on Yoshino's results for the abelian category
$\mathcal{Y}$) prepare the ground.

In \secref[Sections~]{Aut-ab} and \secref[]{examples} we apply our
main theorem to compute the group $\K_1(\mod\,R)$ and the homomorphism
$\lambda \colon R^* \to \K_1(\mod\,R)$ in some concrete examples.
E.g.~for the simple curve singularity \mbox{$R =
  k[\hspace*{-1.4pt}[T^2,T^3]\hspace*{-1.4pt}]$} we obtain
\mbox{$\K_1(\mod\,R) \cong k[\hspace*{-1.4pt}[T]\hspace*{-1.4pt}]^*$}
and show that the homomorphism \mbox{$\lambda \colon
  k[\hspace*{-1.4pt}[T^2,T^3]\hspace*{-1.4pt}]^* \to
  k[\hspace*{-1.4pt}[T]\hspace*{-1.4pt}]^*$} is the inclusion.  It is
well-known that if $R$ is artinian with residue field $k$, then one
has \mbox{$\K_1(\mod\,R)\cong k^*$}. We apply \thmref{1} to confirm
this isomorphism for the ring \mbox{$R=k[X]/(X^2)$} of dual numbers
and to show that the homomorphism $\lambda \colon R^* \to k^*$ is
given by $a+bX \mapsto a^2$.

\new{We end this introduction by mentioning a related preprint \cite{Navkal} of Navkal. Although the present work and the paper of Navkal have been written completely independently (this fact is also pointet out in the latest version of \cite{Navkal}), there is a significant overlap between the two manuscripts: Navkal's main result \cite[thm.~1.2]{Navkal} is the existence of a long exact sequence involving the $\G$-theory of the rings $R$ and $\End_R(M)^\mathrm{op}$ (where $M$ is a particular representation generator of the category of maximal Cohen--Macaulay $R$-modules) and the $\K$-theory of certain division rings. In Section 5 in \emph{loc.\,cit.}, Navkal applies his main result to give some description of the group $\K_1(\mod\,R)$ for the ring \mbox{$R =  k[\hspace*{-1.4pt}[T^2,T^{2n+1}]\hspace*{-1.4pt}]$} where $n\geqslant 1$. We point out that the techniques used in this paper and in Navkal's work are quite different.}

\section{Formulation of the Main Theorem}
\label{sec:main}

Let $R$ be a commutative noetherian local Cohen--Macaulay ring.  By
$\mod\,R$~we denote the abelian category of finitely generated
$R$-modules. The exact categories of finitely generated projective
modules and of maximal Cohen--Macaulay mo\-du\-les over $R$ are
written $\proj\,R$ and $\MCM\,R$, respectively.  The goal of this
section is to state our main \thmref{1}; its proof is postponed to
\secref{proof}.

\begin{stp}
  \label{stp:assumptions}
  Throughout this paper, $(R,\m,k)$ is a commutative noetherian local
  Cohen--Macaulay ring satisfying the following assumptions.
  \begin{rqm}
  \item $R$ is henselian.
  \item $R$ admits a dualizing module.
  \item $R$ has \emph{finite CM-type}, that is, up to isomorphism,
    there are only finitely many non-isomorphic indecomposable maximal
    Cohen--Macaulay $R$-modules.
  \end{rqm}

  Note that (1) and (2) hold if $R$ is $\m$-adically complete.  Since
  $R$ is henselian, the ca\-te\-go\-ry $\mod\,R$ is Krull--Schmidt by
  \cite[prop.~(1.18)]{yos}; this fact will be important a number of
  times in this paper.

  Set \mbox{$M_0=R$} and let \mbox{$M_1,\ldots,M_t$} be a set of
  representatives for the isomorphism classes of non-free
  indecomposable maximal Cohen--Macaulay $R$-modules.  
Let $M$ be any
  \emph{representation generator} of $\MCM\,R$, that is, a finitely
  generated $R$-module such that $\add_RM = \MCM\,R$ \new{(where $\add_RM$ denotes the category of $R$-modules that are isomorphic to a direct summand of some finite direct sum of copies of $M$)}. For example, $M$ could be the square-free module
  \begin{equation}
    \label{eq:M}
    M = M_0 \oplus M_1 \oplus \cdots \oplus M_t\;.
  \end{equation}
  We denote by \mbox{$E = \End_R(M)$} the endomorphism ring of $M$.

  It follows from \cite[thm.~(4.22)]{yos} that $R$ is an isolated
  singularity, and hence by \emph{loc.\,cit.} thm.~(3.2) the category
  $\MCM\,R$ admits Auslan\-der--Reiten sequences. Let
  \begin{equation}
    \label{eq:AR}
    0 \longrightarrow \tau(M_j) \longrightarrow X_j 
    \longrightarrow M_j \longrightarrow 0
    \qquad (1 \leqslant j \leqslant t)
  \end{equation}
  be the Auslander--Reiten sequence in $\MCM\,R$ ending in $M_j$,
  where $\tau$ is the Aus\-lan\-der--Reiten translation.
\end{stp}

\begin{rmk}
  The one-dimensional Cohen--Macaulay rings of finite CM-type are classified by Cimen \cite{Cimen1,Cimen2}, Drozd and Ro{\u\i}ter \cite{DrozdRoiter}, Green and Reiner~\cite{GreenReiner}, and Wiegand \cite{Wiegand1,Wiegand2}. The two-dimensional complete Cohen--Macaulay rings of finite CM-type that contains the  complex numbers are classified by Auslander~\cite{MR816307}, Esnault \cite{Esnault}, and Herzog \cite{Herzog}.
 They are the invariant rings \mbox{$R=\mathbb{C}[\hspace*{-1.4pt}[X,Y]\hspace*{-1.4pt}]^G$} where $G$ is a non-trivial finite subgroup of $\GL_2(\mathbb{C})$. In this case, \mbox{$M=\mathbb{C}[\hspace*{-1.4pt}[X,Y]\hspace*{-1.4pt}]$} is a re\-pre\-sentation generator for $\MCM\,R$ which, unlike the one in \eqref{M}, need not be square-free.
\end{rmk}

\begin{dfn}
  \label{dfn:AR-matrix}
  For each Auslander--Reiten sequence \eqref{AR} we have
  \begin{displaymath}
    X_j \cong M_0^{n_{0j}} \oplus M_1^{n_{1j}} \oplus \cdots
    \oplus M_t^{n_{tj}}
  \end{displaymath}
  for uniquely determined $n_{0j},n_{1j},\ldots,n_{tj} \geqslant 0$.
  Consider the element,
  \begin{displaymath}
    \tau(M_j) + M_j - n_{0j}M_0 - n_{1j}M_1 - \cdots - n_{tj}M_t\;,
  \end{displaymath}
  in the free abelian group $\mathbb{Z}M_0 \oplus \mathbb{Z}M_1 \oplus
  \cdots \oplus \mathbb{Z}M_t$, and write this element as,
  \begin{displaymath}
    y_{0j}M_0 + y_{1j}M_1 + \cdots + y_{tj}M_t\;,
  \end{displaymath}
  where $y_{0j},y_{1j},\ldots,y_{tj} \in \mathbb{Z}$. Define the
  \emph{Auslander--Reiten matrix} $\Upsilon$ as the $(t+1) \times t$
  matrix with entries in $\mathbb{Z}$ whose $j$'th column is
  $(y_{0j},y_{1j},\ldots,y_{tj})$. When $\Upsilon$ is viewed as a
  homomorphism of abelian groups $\Upsilon \colon \mathbb{Z}^t \to
  \mathbb{Z}^{t+1}$ (elements in $\mathbb{Z}^t$ and $\mathbb{Z}^{t+1}$
  are viewed as column vectors), we refer to it as the
  \emph{Auslander--Reiten homomorphism}.
\end{dfn}

\begin{exa}
  \label{exa:7x6}
  Let
  \mbox{$R=\mathbb{C}[\hspace*{-1.4pt}[X,Y,Z]\hspace*{-1.4pt}]/(X^3+Y^4+Z^2)$}.
  Besides \mbox{$M_0=R$} there are exactly \mbox{$t=6$} non-isomorphic
  indecomposable maximal Cohen--Macaulay modules, and the
  Aus\-lan\-der--Reiten sequences have the following form,
  \begin{eqnarray*}
    0 \longrightarrow M_1 \longrightarrow M_2 \longrightarrow M_1 \longrightarrow 0\;\phantom{.} \\
    0 \longrightarrow M_2 \longrightarrow M_1 \oplus M_3 \longrightarrow M_2 \longrightarrow 0\;\phantom{.} \\    
    0 \longrightarrow M_3 \longrightarrow M_2 \oplus M_4 \oplus M_6 \longrightarrow M_3 \longrightarrow 0\;\phantom{.} \\    
    0 \longrightarrow M_4 \longrightarrow M_3 \oplus M_5 \longrightarrow M_4 \longrightarrow 0\;\phantom{.} \\    
    0 \longrightarrow M_5 \longrightarrow M_4 \longrightarrow M_5 \longrightarrow 0\;\phantom{.} \\    
    0 \longrightarrow M_6 \longrightarrow M_0 \oplus M_3 \longrightarrow M_6 \longrightarrow 0\;;
  \end{eqnarray*}
  see \cite[(13.9)]{yos}.  The $7\!\times\!6$ Auslander--Reiten matrix
  $\Upsilon$ is therefore given by
  \begin{displaymath}
    \Upsilon = 
    \begin{pmatrix}
       0 &  0 &  0 &  0 &  0 & -1 \\
       2 & -1 &  0 &  0 &  0 &  0 \\
      -1 &  2 & -1 &  0 &  0 &  0 \\
       0 & -1 &  2 & -1 &  0 & -1 \\
       0 &  0 & -1 &  2 & -1 &  0 \\
       0 &  0 &  0 & -1 &  2 &  0 \\
       0 &  0 & -1 &  0 &  0 &  2
    \end{pmatrix}.
  \end{displaymath}
  In this case, the Auslander--Reiten homomorphism $\Upsilon \colon
  \mathbb{Z}^6 \to \mathbb{Z}^7$ is clearly injective.
\end{exa}

One hypothesis in our main result, \thmref{1} below, is that the
Auslander--Reiten homomorpism $\Upsilon$ over the ring $R$ in question
is injective. We are not aware of an example where $\Upsilon$ is not
injective. The following lemma covers the situation of the rational
double points, that is, the invariant rings
\mbox{$R=k[\hspace*{-1.4pt}[X,Y]\hspace*{-1.4pt}]^G$}, where $k$ is an
algebraically closed field of characteristic $0$ and $G$ is a
non-trivial finite subgroup of $\SL_2(k)$; see \cite{AR}.

\begin{lem}
  Assume that $R$ is complete, integrally closed, non-regular,
  Gorenstein, of Krull dimension $2$, and that the residue field $k$
  is algebraically closed.  Then the Auslander--Reiten homomorphism
  $\Upsilon$ is injective.
\end{lem}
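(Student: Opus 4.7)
The plan is to identify $\Upsilon$, up to the deletion of one column, with the generalized Cartan matrix $\tilde{C}$ of an affine Dynkin diagram of type $\tilde{A}$, $\tilde{D}$, or $\tilde{E}$, and then to invoke the standard description of $\Ker \tilde{C}$.

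First, I would invoke the classification cited in \cite{AR}: under the hypotheses of the lemma together with the standing assumption of finite CM-type from \stpref{assumptions}, $R$ is a rational double point. Consequently, the AR quiver of $\MCM\,R$ is an extended Dynkin diagram of type $\tilde{A}$, $\tilde{D}$, or $\tilde{E}$ whose vertices are $M_0 = R, M_1, \ldots, M_t$, and $n_{ij}$ coincides with the number of edges between $M_i$ and $M_j$.

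Second, since $R$ is a two-dimensional Gorenstein isolated singularity, the AR translation on $\underline{\MCM}\,R$ is given by $\Omega^{d-2} = \mathrm{id}$, so $\tau(M_j) \cong M_j$ in $\MCM\,R$ for each $1 \leqslant j \leqslant t$ (both sides being indecomposable non-free maximal Cohen--Macaulay modules). The definition of $\Upsilon$ then yields $y_{ij} = 2\delta_{ij} - n_{ij}$, and since the AR quiver is simply laced one has the symmetry $n_{ij} = n_{ji}$. It follows that $\Upsilon$ is obtained from the symmetric $(t+1) \times (t+1)$ affine Cartan matrix $\tilde{C}$ of type $\tilde{A}\tilde{D}\tilde{E}$ by deleting the column indexed by $M_0$.

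The injectivity statement is then a standard fact about affine Kac--Moody Cartan matrices: $\tilde{C}$ has rank $t$, and $\Ker \tilde{C}$ is spanned by the strictly positive vector $\delta = (d_0, d_1, \ldots, d_t)$ of marks of the extended diagram, with $d_0 = 1$ in particular. Given $a \in \Ker \Upsilon$, the vector $(0, a_1, \ldots, a_t)^{\top}$ lies in $\Ker \tilde{C}$; being a scalar multiple of $\delta$ and having zero $0$-th coordinate, it must vanish. Hence $a = 0$, and $\Upsilon$ is injective.

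The main obstacle is the second step: justifying that the combinatorial data encoded in $\Upsilon$ really reproduces the affine Cartan matrix with one column deleted. This draws on both the classification of $R$ as an ADE surface singularity and the dimension-two Gorenstein identity $\tau \cong \mathrm{id}$ on $\underline{\MCM}\,R$. Once these identifications are in place, the final linear-algebra step is essentially formal.
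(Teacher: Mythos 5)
Your proposal is correct, and the core identifications are the same as in the paper: both arguments use that the AR quiver collapses to an extended Dynkin diagram \smash{$\tilde{\Delta}$} (so that $n_{ij}$ is the edge-count) and that $\tau = \mathrm{id}$ on indecomposables, giving $y_{ij} = 2\delta_{ij} - n_{ij}$. Where the two arguments part ways is the final linear-algebra step. The paper deletes the \emph{row} indexed by $M_0$ from $\Upsilon$, observes that what remains is the (finite-type) Cartan matrix of the Dynkin graph $\Delta$ obtained by removing the affine vertex, and then cites positive-definiteness/invertibility of that matrix. You instead realize $\Upsilon$ as the affine Cartan matrix $\tilde{C}$ with the \emph{column} indexed by $M_0$ deleted, and argue via the one-dimensional kernel of $\tilde{C}$ spanned by the strictly positive null root $\delta$ with $d_0 = 1$. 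Both are standard facts about \smash{$ADE/\tilde{A}\tilde{D}\tilde{E}$} Cartan matrices and either closes the argument cleanly; the paper's row-deletion route is arguably slightly more self-contained (it only needs nonsingularity of a finite Cartan matrix), while your column-deletion route trades that for the explicit description of $\Ker \tilde{C}$. One minor remark: your justification of $\tau(M_j) \cong M_j$ via $\tau \cong \Omega^{d-2}$ on $\underline{\MCM}\,R$ for $d = 2$ Gorenstein is fine, whereas the paper simply cites the proof of \cite[thm.~1]{AR}; these are equivalent.
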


\begin{proof}
  Let \mbox{$1 \leqslant j \leqslant t$} be given and consider the
  expression
  \begin{displaymath}
    \tau(M_j) + M_j - n_{0j}M_0 - n_{1j}M_1 - \cdots - n_{tj}M_t
    \,=\,
    y_{0j}M_0 + y_{1j}M_1 + \cdots + y_{tj}M_t
  \end{displaymath}
  in the free abelian group \mbox{$\mathbb{Z}M_0 \oplus \mathbb{Z}M_1
    \oplus \cdots \oplus \mathbb{Z}M_t$}, see \dfnref{AR-matrix}.  Let
  $\Gamma$ be the Auslander--Reiten quiver of $\MCM\,R$.  We recall from
  \cite[thm.~1]{AR} that the arrows in $\Gamma$ occur in pairs
  \mbox{\smash{$\!\xymatrix@R=-0.5pc@C=4ex{\circ \ar@<0.5ex>[r] & \circ
      \ar@<0.5ex>[l]}$}}\!, and that collapsing each pair to an
  undirected edge gives an extended Dynkin diagram
  \smash{$\tilde{\Delta}$}.  Moreover, removing the vertex
  corresponding to $M_0=R$ and any incident edges gives a Dynkin graph
  $\Delta$.

  Now, $X_j$ has a direct summand $M_k$ if and only if there is an
  arrow $M_k \rightarrow M_j$ in $\Gamma$.  Also, the
  Auslander--Reiten translation $\tau$ satisfies $\tau(M_j) = M_j$ by
  \cite[proof of thm.\ 1]{AR}.  Combined with the structure of the
  Auslander--Reiten quiver, this means~that
  \begin{displaymath}
    y_{kj} =
    \left\{
      \begin{array}{cl}
        2  & \mbox{ if $k = j$\;,} \\
        -1 & \mbox{ if there is an edge 
          \smash{$\!\xymatrix@C=1.2pc{M_k \ar@{-}[r] & M_j}\!$} in $\tilde{\Delta}$\;,} \\
        0  & \mbox{ otherwise\;.}
      \end{array}
    \right.    
  \end{displaymath}
  Hence the \mbox{$t \times t$} matrix $\Upsilon_0$ with $(y_{1j},
  \ldots, y_{tj})$ as $j$'th column, where \mbox{$1 \leqslant j
    \leqslant t$}, is the Cartan matrix of the Dynkin graph $\Delta$;
  cf.\ \cite[def.\ 4.5.3]{Benson}.  This matrix is invertible by
  \cite[exer.\ (21.18)]{FuHa}.  Deleting the first row
  $(y_{01},\ldots,y_{0t})$ in the Auslander--Reiten matrix $\Upsilon$,
  we get the invertible matrix $\Upsilon_0$, and consequently,
  \mbox{$\Upsilon \colon \mathbb{Z}^t \to \mathbb{Z}^{t+1}$}
  determines an injective homomorphism.
\end{proof}

For a group $G$ we denote by $G_\mathrm{ab}$ its
\emph{abelianization}, i.e.~\mbox{$G_\mathrm{ab} = G/[G,G]$}, where
$[G,G]$ is the commutator subgroup of $G$. 

We refer to the following as the \emph{tilde construction}. It
associates to every automorphism $\alpha \colon X \to X$ of a maximal
Cohen--Macaulay module $X$ an automorphism $\tilde{\alpha} \colon M^q
\to M^q$ of the smallest power $q$ of the representation generator $M$
such that $X$ is a direct summand of $M^q$.

\begin{bfhpg}[Construction]
  \label{res:tilde}
  The chosen representation generator $M$ for $\MCM\,R$ has the form
  \mbox{$M = M_0^{m_0} \oplus \cdots \oplus M_t^{m_t}$} for uniquely
  determined integers \mbox{$m_0,\ldots,m_t>0$}.
  For any module \mbox{$X = M_0^{n_0} \oplus \cdots \oplus
    M_t^{n_t}$} in $\MCM\,R$, we define natural numbers,
  \begin{align*}
    q &= q(X) = \min\{p \in \mathbb{N} \,|\, pm_j \geqslant n_j
    \text{ for all } 0 \leqslant j \leqslant t \}\;, \ \text{and} \\
    v_j &= v_j(X) = qm_j-n_j \geqslant 0\;,
  \end{align*}
  and a module \mbox{$Y = M_0^{v_0} \oplus \cdots \oplus M_t^{v_t}$}
  in $\MCM\,R$.  Let \mbox{$\psi \colon X \oplus Y
    \stackrel{\cong}{\longrightarrow} M^q$} be the $R$-iso\-mor\-phism
  that maps an element
  \begin{align*}
    ((\underline{x}_0,\ldots,\underline{x}_t),
    (\underline{y}_0,\ldots,\underline{y}_t)) 
    \in X \oplus Y = 
    (M_0^{n_0} \oplus \cdots \oplus M_t^{n_t}) \oplus 
    (M_0^{v_0} \oplus \cdots \oplus M_t^{v_t})\;,
  \end{align*}
  where $\underline{x}_j \in M_j^{n_j}$ and $\underline{y}_j \in
  M_j^{v_j}$, to the element
  \begin{align*}
    ((\underline{z}_{01},\ldots,\underline{z}_{t1}), \ldots,
  (\underline{z}_{0q},\ldots,\underline{z}_{tq})) \in M^q = 
    (M_0^{m_0} \oplus \cdots \oplus M_t^{m_t})^q\;, 
  \end{align*}
  where $\underline{z}_{j1},\ldots,\underline{z}_{jq} \in M_j^{m_j}$
  are given by $(\underline{z}_{j1},\ldots,\underline{z}_{jq}) =
  (\underline{x}_j,\underline{y}_j) \in M_j^{qm_j} = M_j^{n_j+v_j}$.

  Now, given $\alpha$ in $\Aut_R(X)$, we define $\tilde{\alpha}$ to be
  the uniquely determined element in $\Aut_R(M^q)$ that makes the
  following diagram commutative,
  \begin{displaymath}
    \xymatrix{
      X \oplus Y
      \ar[r]^-{\psi}_-{\cong} \ar[d]_-{\alpha \oplus 1_Y}^-{\cong} & 
      M^q\,\phantom{.} \ar@<-2pt>[d]^-{\tilde{\alpha}}_-{\cong} \\
      X \oplus Y
      \ar[r]_-{\psi}^-{\cong} & M^q\,.
    }
  \end{displaymath}
  The automorphism $\tilde{\alpha}$ of $M^q$ has the form
  $\tilde{\alpha}=(\tilde{\alpha}_{ij})$ for uniquely determined
  endomorphisms $\tilde{\alpha}_{ij}$ of $M$, that is,
  $\tilde{\alpha}_{ij} \in E = \End_R(M)$. Hence
  $\tilde{\alpha}=(\tilde{\alpha}_{ij})$ can naturally be viewed as an
  invertible $q \times q$ matrix with entries in $E$.
\end{bfhpg}

\begin{exa}
  \label{exa:diag}
  Let $M=M_0 \oplus \cdots \oplus M_t$ and $X = M_j$. Then $q=1$ and
  \begin{displaymath}
    Y = M_0 \oplus \cdots \oplus M_{j-1} \oplus M_{j+1} 
    \oplus \cdots \oplus M_t\;.
  \end{displaymath}
  The isomorphism \mbox{$\psi \colon X \oplus Y \to M$} maps
  $(x_j,(x_0,\ldots,x_{j-1},x_{j+1},\ldots,x_t))$ in \mbox{$X \oplus
    Y$} to $(x_0,\ldots,x_{j-1},x_j,x_{j+1},\ldots,x_t)$ in $M$.
  Therefore, for \mbox{$\alpha \in \Aut_R(X) = \Aut_R(M_j)$},
  Construction \resref{tilde} yields the following automorphism of
  $M$,
  \begin{displaymath}
    \tilde{\alpha} = \psi(\alpha \oplus
    1_Y)\psi^{-1} = 1_{M_0} \oplus \cdots \oplus 1_{M_{j-1}} \oplus \alpha \oplus 1_{M_{j+1}} \oplus \cdots \oplus 1_{M_t}\;,
  \end{displaymath}
%  \begin{displaymath}
%    \tilde{\alpha} = \psi(\alpha \oplus
%    1_Y)\psi^{-1} = 
%    \left(
%    {\renewcommand{\arraystretch}{0.1}
%     \setlength\arraycolsep{0.2pc} 
%    \begin{array}{ccccccc}
%      1_{M_0} & {} & {} & {} & {} & {} & {} \\
%      {} & \ddots & {} & {} & {} & {} & {} \\
%      {} & {} & 1_{M_{j-1}} & {} & {} & {} & {} \\
%      {} & {} & {} & \alpha & {} & {} & {} \\
%      {} & {} & {} & {} & 1_{M_{j+1}} & {} & {} \\
%      {} & {} & {} & {} & {} & \ddots & {} \\
%      {} & {} & {} & {} & {} & {} & 1_{M_t}
%    \end{array}
%    }
%    \right),
%  \end{displaymath}
  which is an invertible \mbox{$1 \times 1$} matrix with entry
  in $E=\End_R(M)$.
\end{exa}

The following result on Auslander--Reiten sequences is quite standard.
We provide a few proof details along with the appropriate references.

\begin{prp}
  \label{prp:AR}
  Let there be given Auslander--Reiten
  sequences in $\MCM\,R$,
  \begin{align*}
    0 \longrightarrow \tau(M) \longrightarrow X \longrightarrow M \longrightarrow 0
    \quad \text{ and } \quad
    0 \longrightarrow \tau(M') \longrightarrow X' \longrightarrow M' \longrightarrow 0\;.
  \end{align*}
  If \mbox{$\alpha \colon M \to M'$} is a homomorphism, then there
  exist homomorphisms $\beta$ and $\gamma$ that make the following
  diagram commutative,
  \begin{displaymath}
    \xymatrix{
      0 \ar[r] & \tau(M) \ar@{-->}[d]^-{\gamma} \ar[r] & X \ar@{-->}[d]^-{\beta} \ar[r] & M \ar[d]^-{\alpha} \ar[r] & 0\,\phantom{.} \\
      0 \ar[r] & \tau(M') \ar[r] & X' \ar[r] & M' \ar[r] & 0\,.
    }
  \end{displaymath}
  Furthermore, if $\alpha$ is an isomorphism, then so are $\beta$ and
  $\gamma$.
\end{prp}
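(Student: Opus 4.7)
The plan is to exploit the defining \emph{right almost split} property of the AR sequence ending in $M'$: every morphism $Y \to M'$ that fails to be a split epimorphism factors through $X' \to M'$. A preliminary observation I would use is that since $M$ and $M'$ are indecomposable (being end terms of AR sequences), a morphism $\alpha \colon M \to M'$ is either an isomorphism or fails to be a split epimorphism --- a split epi onto the indecomposable $M'$ would exhibit $M'$ as a direct summand of the indecomposable $M$, forcing $\alpha$ to be an isomorphism by Krull--Schmidt. This dichotomy lets me split the argument into two cases.

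In the first case, $\alpha$ is not an isomorphism, hence not a split epi, and consequently neither is the composite $X \to M \to M'$ (otherwise, post-composing with a section would yield a section of $\alpha$). The almost split property then supplies $\beta \colon X \to X'$ making the middle square commute. To produce $\gamma$, I would restrict $\beta$ to $\tau(M) \subset X$ and observe that its post-composition with $X' \to M'$ coincides with $\tau(M) \to X \to M \to M'$, which vanishes by exactness of the top row. Hence the restriction factors uniquely through the kernel $\tau(M')$ of $X' \to M'$, yielding $\gamma$ and completing the diagram.

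The second case, $\alpha$ an isomorphism, is also where the ``furthermore'' claim acquires content. Here I would push the top sequence forward along $\alpha$, obtaining an almost split sequence $0 \to \tau(M) \to X \to M' \to 0$ ending in $M'$, and then invoke the uniqueness of AR sequences ending in a given indecomposable up to isomorphism of short exact sequences --- a standard fact recorded in Yoshino's monograph. The resulting isomorphism of short exact sequences provides $\beta$ and $\gamma$ simultaneously, both automatically isomorphisms.

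I do not anticipate a genuine obstacle beyond the split-epi dichotomy noted at the outset; once that is in hand, the right almost split property and the uniqueness of AR sequences carry out the two halves of the statement respectively.
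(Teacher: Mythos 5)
Your argument is correct, but it takes a slightly different path from the paper's, which is worth noting because the paper avoids the case split entirely. Both proofs reduce the existence of $\beta$ to verifying that the composite $\alpha\rho$, with $\rho \colon X \to M$ the right map of the top sequence, is not a split epimorphism, and then invoke the right almost split property of $X' \to M'$ (Yoshino's Lemma (2.9)). Where you diverge is in how the case when $\alpha$ is an isomorphism gets handled: you separate it out and appeal to the uniqueness of Auslander--Reiten sequences ending in a given indecomposable. The paper instead proves in one stroke that $\alpha\rho$ is never a split epimorphism: if some $\tau$ satisfied $\alpha\rho\tau = 1_{M'}$, then $\alpha$ would be a split epi onto the indecomposable $M'$, hence an isomorphism, and then $\rho\tau\alpha = \alpha^{-1}(\alpha\rho\tau)\alpha = 1_M$ would make $\rho$ a split epi, contradicting the Auslander--Reiten property of the top sequence. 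This single contradiction argument subsumes your two cases, and it produces $\beta$ (hence $\gamma$ by the kernel argument you also describe) without treating the isomorphism case separately. The ``Furthermore'' clause is in both versions delegated to Yoshino's Lemma (2.4); your invocation of uniqueness of AR sequences is morally the same result. In short: your reasoning is sound, the paper's proof is simply more economical because the contradiction at the end --- ``$\alpha$ split epi $\Rightarrow$ $\alpha$ iso $\Rightarrow$ $\rho$ split epi'' --- makes the dichotomy you set up at the outset unnecessary for the existence claim.

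One small terminological point: what you call ``pushing the top sequence forward along $\alpha$'' is not a pushout (those are taken along maps out of the left or middle term); you are simply replacing the right map $\rho$ by $\alpha\rho$, exploiting that isomorphisms preserve the AR property. The construction is right even if the phrasing could mislead.
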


\begin{proof}
  Write \mbox{$\rho \colon X \to M$} and \mbox{$\rho' \colon X' \to
    M'$}. It suffices to prove the existence of $\beta$ such that
  $\rho' \beta = \alpha \rho$, because then the existence of $\gamma$
  follows from diagram chasing.

  As \mbox{$0 \to \tau(M') \to X' \to M' \to 0$} is an
  Auslander--Reiten sequence, it suffices by \cite[lem.~(2.9)]{yos} to
  show that $\alpha\rho \colon X \to M'$ is not a split epimorphism.
  Suppose that there do exist \mbox{$\tau \colon M' \to X$} with
  \mbox{$\alpha\rho\tau = 1_{M'}$}. Hence $\alpha$ is a split
  epimorphism.  As $M$ is indecomposable, $\alpha$ must be an
  isomorphism. Thus \mbox{$\rho\tau\alpha =
    \alpha^{-1}(\alpha\rho\tau)\alpha = 1_M$}, which contradicts the
  fact that $\rho$ is not a split epimorphism.

  The fact that $\beta$ and $\gamma$ are isomorphisms if $\alpha$ is
  so follows from \cite[lem.~(2.4)]{yos}.
\end{proof}

The choice requested in the following construction is possible by
\prpref{AR}.

\begin{con}
  \label{con:j-alpha}
  Choose for each \mbox{$1 \leqslant j \leqslant t$} and every
  \mbox{$\alpha \in \Aut_R(M_j)$} elements $\beta_{j,\alpha} \in
  \Aut_R(X_j)$ and $\gamma_{j,\alpha} \in \Aut_R(\tau(M_j))$ that make
  the next diagram commute,
  \begin{equation}
    \label{eq:j-alpha-diagram}
    \begin{gathered}
    \xymatrix{
      0 \ar[r] & \tau(M_j) \ar[d]_-{\cong}^-{\gamma_{j,\alpha}}
      \ar[r] & X_j \ar[d]_-{\cong}^-{\beta_{j,\alpha}} \ar[r] &
      M_j \ar[d]_-{\cong}^-{\alpha} \ar[r] & 0\,\phantom{.} \\
      0 \ar[r] & \tau(M_j) \ar[r] & X_j \ar[r] & M_j \ar[r] & 0\,;
    }
    \end{gathered}
  \end{equation}
  here the row(s) is the $j$'th Auslander--Reiten sequence \eqref{AR}.
\end{con}

As shown in \lemref{End-semilocal}, the endomorphism ring
$E=\End_R(M)$ of the chosen representation generator $M$ is semilocal,
that is, $E/\J(E)$ is semisimple. Thus, if the ground ring $R$, and
hence also the endomorphism ring $E$, is an algebra over the residue
field $k$ and \mbox{$\operatorname{char}(k) \neq 2$}, then a result by
Vaserstein \cite[thm.~2]{MR2111217} yields that the canonical
homomorphism \mbox{$\theta_E \colon E^*_{\mathrm{ab}} \to
  \K_1^{\mathrm{C}}(E)$} is an isomorphism. Here
$\K_1^{\mathrm{C}}(E)$ is the classical $\K_1$-group of the ring $E$;
see~\resref{classical}.  Its inverse,
\begin{equation*}
  \theta_E^{-1} = \textstyle{\det_E} \colon \K_1^{\mathrm{C}}(E) \longrightarrow E^*_\mathrm{ab} = \Aut_R(M)_{\mathrm{ab}}\;,
\end{equation*}
is called the \emph{generalized determinant map}. The details are
discussed in \secref{semilocal}. We are now in a position to define
the subgroup $\Xi$ of $\Aut_R(M)_{\mathrm{ab}}$ that appears in our
main \thmref{1} below.

\begin{dfn}
  \label{dfn:Xi}
  Let $(R,\m,k)$ be a ring satisfying the hypotheses in
  \stpref{assumptions}.  Assume, in addition, that $R$ is an algebra
  over $k$ and that one has \mbox{$\operatorname{char}(k) \neq 2$}.
  Define a subgroup $\Xi$ of $\Aut_R(M)_{\mathrm{ab}}$ as follows.
  \begin{itemlist}
  \item[--] Choose for each \mbox{$1 \leqslant j \leqslant t$} and
    each \mbox{$\alpha \in \Aut_R(M_j)$} elements $\beta_{j,\alpha}
    \in \Aut_R(X_j)$ and $\gamma_{j,\alpha} \in \Aut_R(\tau(M_j))$ as
    in \conref{j-alpha}.

  \item[--] Let \smash{$\tilde{\alpha}$, $\tilde{\beta}_{j,\alpha}$,
      and $\tilde{\gamma}_{j,\alpha}$} be the invertible matrices with
    entries in $E$ obtained by applying the tilde construction
    \resref{tilde} to \smash{$\alpha$, $\beta_{j,\alpha}$, and
      $\gamma_{j,\alpha}$}.
  \end{itemlist}
  Let $\Xi$ be the subgroup of $\Aut_R(M)_{\mathrm{ab}}$
  generated by the elements
  \begin{displaymath}
    \textstyle{(\det_E\tilde{\alpha})(\det_E\tilde{\beta}_{j,\alpha})^{-1}(\det_E\tilde{\gamma}_{j,\alpha})}\;,
  \end{displaymath}
  where $j$ ranges over $\{1,\ldots,t\}$ and $\alpha$ over
  $\Aut_R(M_j)$.
\end{dfn}

A priori the definition of the group $\Xi$ involves certain choices. However, it follows from \prpref{sigma} that $\Xi$ is actually independent of the choices made.

\begin{rmk}
  \label{rmk:remark-to-definition-Xi}
  In specific examples it is convenient to consider the simplest
  possible representation generator \mbox{$M=M_0\oplus M_1 \oplus
    \cdots \oplus M_t$}. In this case, \exaref{diag} shows that
  $\tilde{\alpha}$ and $\tilde{\gamma}_{j,\alpha}$ are
  \mbox{$1\!\times\!1$} matrices with entries in $E$, that is,
  \mbox{$\tilde{\alpha}, \tilde{\gamma}_{j,\alpha} \in E^*$}, and
  consequently \mbox{$\det_E\tilde{\alpha} = \tilde{\alpha}$} and
  \mbox{$\det_E\tilde{\gamma}_{j,\alpha} = \tilde{\gamma}_{j,\alpha}$}
  as elements in $E^*_\mathrm{ab}$.
\end{rmk}

We are now in a position to state our main result.

\begin{thm}
  \label{thm:1}
  Let $(R,\m,k)$ be a ring satisfying the hypotheses in
  \stpref{assumptions}.  Assume that $R$ is an algebra over its
  residue field $k$ with \mbox{$\operatorname{char}(k) \neq 2$}, and
  that the Auslander--Reiten homomorphism $\Upsilon \colon
  \mathbb{Z}^t \to \mathbb{Z}^{t+1}$ from \dfnref{AR-matrix} is
  injective.

  Let $M$ be any representation generator of \mbox{$\MCM\,R$}. There
  is an isomorphism,
  \begin{displaymath}
    \K_1(\mod\,R) \cong \Aut_R(M)_{\mathrm{ab}}/\Xi\;,
  \end{displaymath}
  where $\Xi$ is the subgroup of $\Aut_R(M)_{\mathrm{ab}}$ given in
  \dfnref{Xi}.  

  Furthermore, if \mbox{$\,\mathrm{inc} \colon \proj\,R
    \to \mod\,R$} is the inclusion functor and \mbox{$M=R
    \oplus M'$}, then \mbox{$\K_1(\mathrm{inc}) \colon \K_1(\proj\,R)
    \to \K_1(\mod\,R)$} may be identified with the homomorphism,
  \begin{displaymath}
    \lambda \colon R^* \longrightarrow \Aut_R(M)_{\mathrm{ab}}/\Xi
    \qquad \text{given by} \qquad
    r \longmapsto
    \begin{pmatrix}
      r1_R & 0 \\
      0 & 1_{M'}
    \end{pmatrix}.
  \end{displaymath}
\end{thm}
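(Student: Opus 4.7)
The plan is to establish the isomorphism via a chain of four identifications, and to simultaneously track the image of the inclusion functor $\mathrm{inc}\colon \proj\,R \to \mod\,R$. First, because $R$ admits a dualizing module and is Cohen--Macaulay, every finitely generated $R$-module has a finite resolution by maximal Cohen--Macaulay modules, so Quillen's resolution theorem yields $\K_1(\MCM\,R) \cong \K_1(\mod\,R)$. Second, since $M$ is a representation generator, the functor $\Hom_R(M,-)$ implements a Morita-type equivalence from $\add_R M = \MCM\,R$, endowed with the \emph{split} exact structure, onto $\proj\,E^{\mathrm{op}}$; hence $\K_1(\add_R M,\mathrm{split}) \cong \K_1(\proj\,E^{\mathrm{op}})$.

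The third identification passes from Quillen's group to the classical one. The Gersten--Sherman transformation of \secref{K} produces a natural comparison map $\K_1(\proj\,E^{\mathrm{op}}) \to \K_1^{\mathrm{C}}(E)$; since $E$ is semilocal by \lemref{End-semilocal}, $R$ is a $k$-algebra, and $\operatorname{char}(k) \ne 2$, Vaserstein's theorem (\secref{semilocal}) renders this an isomorphism whose inverse is the generalized determinant $\det_E \colon \K_1^{\mathrm{C}}(E) \to E^*_{\mathrm{ab}} = \Aut_R(M)_{\mathrm{ab}}$. Under the composite of these three identifications, the class of an automorphism $\alpha \in \Aut_R(X)$ of an MCM module $X$ is sent to $\det_E \tilde\alpha$, where $\tilde\alpha$ is the invertible matrix produced by the tilde construction \resref{tilde}.

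The fourth and most intricate step reconciles the split exact structure on $\add_R M$ with the genuine exact structure on $\MCM\,R$. Combining the properties of the abelian category $\mathcal{Y}$ from \secref{Y} with the categorical equivalences developed in \secref{functors}, one obtains a five-term exact sequence of the shape
\begin{equation*}
\K_1(\mathcal{Y}) \xrightarrow{\;\partial\;} \K_1(\add_R M,\mathrm{split}) \longrightarrow \K_1(\MCM\,R) \longrightarrow \K_0(\mathcal{Y}) \xrightarrow{\;\Upsilon\;} \K_0(\add_R M,\mathrm{split}),
\end{equation*}
in which the $\K_0$-level connecting map is precisely the Auslander--Reiten homomorphism $\Upsilon$. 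The injectivity hypothesis on $\Upsilon$ forces the arrow $\K_1(\MCM\,R) \to \K_0(\mathcal{Y})$ to vanish, giving $\K_1(\MCM\,R) \cong \K_1(\add_R M,\mathrm{split})/\Im \partial$. Each Auslander--Reiten sequence \eqref{AR}, together with the completion data $(\alpha,\beta_{j,\alpha},\gamma_{j,\alpha})$ of \conref{j-alpha}, contributes a generator of $\Im \partial$ of alternating-sum form $[\alpha] - [\beta_{j,\alpha}] + [\gamma_{j,\alpha}]$; after transporting through $\det_E$ and the tilde construction, these become exactly the generators of $\Xi$ in \dfnref{Xi}. Concatenating the four identifications delivers $\K_1(\mod\,R) \cong \Aut_R(M)_{\mathrm{ab}}/\Xi$.

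The second assertion is then a matter of naturality. An element $r \in R^* \cong \K_1(\proj\,R)$ represents the automorphism $r \cdot 1_R$ of $R$. Writing $M = R \oplus M'$, the tilde construction applied to $r \cdot 1_R \in \Aut_R(R)$ manifestly yields the block automorphism $\mathrm{diag}(r \cdot 1_R, 1_{M'})$ of $M$; its class in $\Aut_R(M)_{\mathrm{ab}}/\Xi$ equals $\lambda(r)$, and naturality of the four preceding identifications shows that this coincides with $\K_1(\mathrm{inc})(r)$. The main obstacle is unquestionably the fourth step: setting up the five-term exact sequence, verifying that its $\K_0$-connecting map is literally $\Upsilon$, and matching $\Im \partial$ with $\Xi$ element-by-element. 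This is where the Quillen-style machinery of \secref{Y} must be combined with the Gersten--Sherman transformation to do the heavy lifting.
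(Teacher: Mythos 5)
Your proposal follows essentially the same route the paper takes: resolve $\mod\,R$ by $\MCM\,R$, projectivize to $\proj(E^{\mathrm{op}})$, reach $E^*_{\mathrm{ab}}$ via Vaserstein's theorem, and use the Gabriel localization sequence for the coherent pair $(\MCM\,R,\proj\,R)$ together with Yoshino's devissage on $\mathcal{Y}$ to identify the kernel with $\Xi$. Two imprecisions are worth fixing. The Gersten--Sherman transformation $\zeta$ is a natural map $\K_1^{\mathrm{B}}\to\K_1^{\mathrm{Q}}$, not a map from Quillen's $\K_1(\proj(E^{\mathrm{op}}))$ to the classical $\K_1^{\mathrm{C}}(E)$ as you wrote; that identification and the passage to $E^*_{\mathrm{ab}}$ via Vaserstein are separate facts. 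More importantly, the alternating-sum relation $[M_j,\alpha]-[X_j,\beta_{j,\alpha}]+[\tau(M_j),\gamma_{j,\alpha}]$ that you want to feed into $\Im\partial$ is a relation in Bass' $\K_1^{\mathrm{B}}$, where it comes from short exact sequences in the loop category; Quillen's $\K_1^{\mathrm{Q}}$ has no built-in generators-and-relations description. The paper therefore runs the entire computation in $\K_1^{\mathrm{B}}$ --- devissage gives explicit generators $[F_j,\varphi]$ of $\K_1^{\mathrm{B}}(\mathcal{Y})$, and a projective-cover argument (via the Yoneda embedding) matches them bijectively with the tilde-determinant elements generating $\Xi$, cf.\ \prpref{sigma} --- and only afterwards descends to $\K_1^{\mathrm{Q}}$: \prpref{K1Q-is-K1B} proves $\zeta_{\mod\,R}$ is an isomorphism by showing $\zeta_{\mathcal{Y}}$ and $\zeta_{\mod(\MCM\,R)}$ are isomorphisms and then invoking the five lemma, where the injectivity of $\Upsilon$ is used a second time. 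This last step is not automatic, since $\zeta_{\mod\,A}$ already fails to be surjective for $A=\mathbb{Z}C_2$. So your ``heavy lifting'' remark is right in spirit, but the mechanism is that $\zeta$ transports the $\K_1^{\mathrm{B}}$-computation to $\K_1^{\mathrm{Q}}$, not a direct comparison map landing in $\K_1^{\mathrm{C}}$.
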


As mentioned in the introduction, the proof of \thmref{1} spans
\secref[Sections~]{K} to \secref[]{proof}. Applications and
examples are presented in \secref[Sections~]{Aut-ab} and
\secref[]{examples}. The interested reader could go ahead and read \secref[Sections~]{Aut-ab}--\secref[]{examples} right away, since these sections are practically independent of \secref[]{K}--\secref[]{proof}.

\section{The Gersten--Sherman Transformation}
\label{sec:K}

To prove \thmref{1}, we need to compare and/or identify various $\K$-groups. The relevant definitions and properties of these $\K$-groups are recalled below. The (so-called) Gersten--Sherman transformation is our most valuable tool for comparing $\K$-groups, and the main part of this section is devoted to this natural transformation. Readers who are familiar with $\K$-theory may skip this section altogether.

In the following, the Grothendieck group functor is denoted by $\G$.

\begin{ipg}
  \label{res:classical}
  Let $A$ be a unital ring. 

  The \emph{classical $\K_0$-group} of $A$ is defined as
  \mbox{$\K_0^{\mathrm{C}}(A)=\G(\proj\,A)$}, that is, the
  Gro\-then\-dieck group of the category of finitely generated
  projective $A$-modules.

  The \emph{classical $\K_1$-group} of $A$ is defined as
  $\K_1^{\mathrm{C}}(A)=\GL(A)_{\mathrm{ab}}$, that is, the
  abelianization of the infinite (or stable) general linear group; see
  e.g.~Bass \cite[chap.~V]{MR0249491}.
\end{ipg}

\begin{ipg}
  \label{res:loop-category}
  Let $\cC$ be any category. Its \emph{loop category} $\Omega\cC$ is
  the category whose objects are pairs $(C,\alpha)$ with $C \in \cC$
  and $\alpha \in \Aut_{\cC}(C)$. A morphism $(C,\alpha) \to
  (C',\alpha')$ in $\Omega\cC$ is a commutative diagram in $\cC$,
  \begin{displaymath}
    \xymatrix{
      C \ar[d]_-{\alpha}^-{\cong} \ar[r]^-{\psi} & 
      C'\phantom{.} \ar[d]^-{\alpha'}_-{\cong}\\
      C \ar[r]^-{\psi} & C'.
    }
  \end{displaymath}
\end{ipg}

\begin{ipg}
  \label{res:K1B}
  Let $\cC$ be a skeletally small exact category. Its loop category
  $\Omega\cC$ is also skeletally small, and it inherits a natural
  exact structure from $\cC$.  \emph{Bass' $\K_1$-group} (also called
  \emph{Bass' universal determinant group}) of $\cC$, which we denote
  by $\K_1^{\mathrm{B}}(\cC)$, is the Grothendieck group of
  $\Omega\cC$, that is $\G(\Omega\cC)$, modulo the subgroup generated
  by all elements of the form
  \begin{displaymath}
    (C,\alpha) + (C,\beta) - (C,\alpha\beta)\;,
  \end{displaymath}
  where \mbox{$C \in \cC$} and \mbox{$\alpha,\beta \in
    \Aut_{\cC}(C)$}; see the book of Bass \cite[chap.~VIII\S1]{MR0249491} or
  Rosenberg \cite[def.~3.1.6]{MR1282290}.  For $(C,\alpha)$ in
  $\Omega\cC$ we denote by $[C,\alpha]$ its image in
  $\K_1^{\mathrm{B}}(\cC)$.
\end{ipg}

\begin{ipg}
  \label{res:neutral}
  For every $C$ in $\cC$ one has $[C,1_C]+[C,1_C] = [C,1_C1_C] =
  [C,1_C]$ in $\K_1^{\mathrm{B}}(\cC)$. Consequently, $[C,1_C]$ is the
  neutral element in $\K_1^{\mathrm{B}}(\cC)$.
\end{ipg}

\begin{ipg}
  \label{res:free}
  For a unital ring $A$ there is by \cite[thm.~3.1.7]{MR1282290} a
  natural isomorphism,
  \begin{displaymath}
    \smash{\eta_A \colon \K_1^{\mathrm{C}}(A) \stackrel{\cong}{\longrightarrow}
    \K_1^{\mathrm{B}}(\proj\,A)\;.}
  \end{displaymath}

  The isomorphism $\eta_A$ maps \mbox{$\xi \in \GL_n(A)$}, to the
  class \mbox{$[A^n,\xi] \in \K_1^{\mathrm{B}}(\proj\,A)$}. Here $\xi$
  is viewed as an automorphism of the row space $A^n$ (a free left
  $A$-module), that is, $\xi$ acts by multiplication from the right.

  The inverse map $\eta_A^{-1}$ acts as follows.  Let $[P,\alpha]$ be
  in $\K_1^{\mathrm{B}}(\proj\,A)$. Choose any $Q$ in $\proj\,A$ and
  any isomorphism \smash{$\psi \colon P \oplus Q \to A^n$} with $n \in
  \mathbb{N}$. In $\K_1^{\mathrm{B}}(\proj\,A)$ one has
  \begin{displaymath}
    [P,\alpha] = [P,\alpha] + [Q,1_Q] = [P\oplus Q,\alpha \oplus 1_Q] 
    = [A^n,\psi(\alpha \oplus 1_Q)\psi^{-1}]\;.
  \end{displaymath}
  The automorphism $\psi(\alpha \oplus 1_Q)\psi^{-1}$ of (the row
  space) $A^n$ can be identified with a matrix in $\beta \in
  \GL_n(A)$. The action of \smash{$\eta_A^{-1}$} on $[P,\alpha]$ is
  now $\beta$'s image in \smash{$\K_1^{\mathrm{C}}(A)$}.
\end{ipg}

\begin{ipg}
  \label{res:Quillen}
  Quillen defines in \cite{MR0338129} functors $\K_n^{\mathrm{Q}}$
  from the category of skeletally small exact categories to the
  category of abelian groups.  More precisely,
  \mbox{$\K_n^{\mathrm{Q}}(\cC) = \pi_{n+1}(\B\Q\cC,0)$} where $\Q$ is
  Quillen's $\Q$-construction and $\B$ denotes the classifying space.

  The functor $\K_0^{\mathrm{Q}}$ is naturally isomorphic to the
  Grothendieck group functor $\G$; see \cite[\S2 thm.~1]{MR0338129}.
  For a ring $A$ there is a natural isomorphism
  \smash{$\K_1^{\mathrm{Q}}(\proj\,A) \cong \K_1^{\mathrm{C}}(A)$};
  see for example Srinivas \cite[cor.~(2.6) and
  thm.~(5.1)]{MR1382659}.
\end{ipg}

Gersten sketches in \cite[\S5]{MR0382398} the construction of a natural
transformation $\zeta \colon \K_1^{\mathrm{B}} \to \K_1^{\mathrm{Q}}$
of functors on the category of skeletally small exact categories.  The
details of this construction were later given by Sherman
\cite[\S3]{Sherman}, and for this reason we refer to $\zeta$ as the
\emph{Gersten--Sherman transformation}\footnote{\ In the papers by
  Gersten \cite{MR0382398} and Sherman \cite{Sherman}, the functor
  $\K_1^{\mathrm{B}}$ is denoted by $\K_1^{\mathrm{det}}$.}.  Examples
due to Gersten and Murthy \cite[prop.~5.1 and 5.2]{MR0382398} show
that for a general skeletally small exact category $\cC$, the
homomorphism \smash{$\zeta_{\cC} \colon \K_1^{\mathrm{B}}(\cC) \to
  \K_1^{\mathrm{Q}}(\cC)$} is neither injective nor surjective.  For
the exact category $\proj\,A$, where $A$ is a ring, it is known that
$\K_1^{\mathrm{B}}(\proj\,A)$ and $\K_1^{\mathrm{Q}}(\proj\,A)$ are
isomorphic, indeed, they are both isomorphic to the classical
$\K$-group $\K_1^{\mathrm{C}}(A)$; see~\resref{free} and
\resref{Quillen}. Therefore, a natural question arises: is
$\zeta_{\proj\,A}$ an isomorphism? Sherman answers this question
affirmatively in \cite[pp.~231--232]{Sherman}; in fact, in
\emph{loc.\,cit.}  Theorem~3.3 it is proved that $\zeta_{\cC}$ is an
isomorphism for every semisimple exact ca\-te\-gory, that is, an exact
category in which every short exact sequence splits. We note these
results of Gersten and Sherman for later use.

\begin{thm}
  \label{thm:Gersten-transformation-1}
  There exists a natural transformation \smash{$\zeta \colon
    \K_1^{\mathrm{B}} \to \K_1^{\mathrm{Q}}$}, which we call the
  \emph{\emph{Gersten--Sherman transformation}}, of functors on the
  category of skeletally small exact categories such that
  \smash{$\zeta_{\proj\,A} \colon \K_1^{\mathrm{B}}(\proj\,A) \to
    \K_1^{\mathrm{Q}}(\proj\,A)$} is an isomorphism for every ring
  $A$. \qed
\end{thm}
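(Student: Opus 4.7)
The plan is to adopt the construction of $\zeta$ sketched by Gersten in \cite[\S5]{MR0382398} and made rigorous by Sherman in \cite[\S3]{Sherman}, and then to invoke Sherman's criterion to obtain the second statement.

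First I would define $\zeta_\cC$ on a generator $[C,\alpha] \in \K_1^{\mathrm{B}}(\cC)$ as follows. An automorphism $\alpha \colon C \to C$ determines two parallel arrows in $\Q\cC$ between $0$ and $C$ (one built from $1_C$ and one from $\alpha$), and the resulting 2-cell in the classifying space $\B\Q\cC$ has trivial boundary, hence yields an element of $\pi_2(\B\Q\cC,0) = \K_1^{\mathrm{Q}}(\cC)$. The assignment $(C,\alpha) \mapsto \zeta_\cC[C,\alpha]$ must then be shown to be additive on admissible short exact sequences in $\Omega\cC$ (so that it descends from $\G(\Omega\cC)$ to $\K_1^{\mathrm{B}}(\cC)$) and to satisfy $[C,\alpha]+[C,\beta] = [C,\alpha\beta]$; both are verified in \cite[\S3]{Sherman} by careful analysis of 2-cells in $\B\Q\cC$. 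Naturality of $\zeta$ in $\cC$ is then automatic from the functoriality of $\Q$ and $\B$.

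For the second assertion, I would observe that the exact category $\proj\,A$ is \emph{semisimple} in the sense of Sherman: every admissible short exact sequence \mbox{$0 \to P' \to P \to P'' \to 0$} of finitely generated projective $A$-modules splits, because $P''$ is projective. Hence \cite[thm.~3.3]{Sherman} applies directly and yields that $\zeta_{\proj\,A}$ is an isomorphism. The main obstacle to executing the plan in full detail is the geometric bookkeeping behind the definition of $\zeta_\cC$ and behind the two compatibilities listed above; this is precisely the technical heart of \cite[\S3]{Sherman}, which I would cite rather than reproduce.
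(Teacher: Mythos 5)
Your proposal matches the paper's approach exactly: the theorem is stated without proof (note the \qed in the statement) because it is a direct citation of Gersten's sketch \cite[\S5]{MR0382398}, Sherman's detailed construction \cite[\S3]{Sherman}, and in particular Sherman's Theorem 3.3 on semisimple exact categories, applied to $\proj\,A$ since every short exact sequence of projectives splits. The sketch you give of the geometric definition of $\zeta_\cC$ and the verifications required is consistent with what those sources contain, and you correctly defer the technical bookkeeping to Sherman, as the paper itself does.
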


We will also need the next result on the Gersten--Sherman transformation.
Recall that a \emph{length category} is an abelian category in which
every object has finite length.

\begin{thm}
  \label{thm:Gersten-transformation-2}
  If $\cA$ is a skeletally small length category with only finitely
  many simple objects (up to isomorphism), then \smash{$\zeta_{\cA}
    \colon \K_1^{\mathrm{B}}(\cA) \to \K_1^{\mathrm{Q}}(\cA)$} is an
  isomorphism.
\end{thm}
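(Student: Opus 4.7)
The plan is to reduce to the semisimple case by exploiting the inclusion $i\colon\cA_{\mathrm{ss}}\hookrightarrow\cA$ of the full subcategory of semisimple objects. Being closed in $\cA$ under subobjects, quotients, and finite direct sums, $\cA_{\mathrm{ss}}$ is itself an abelian length category, and every short exact sequence in it splits. By \thmcite[3.3]{Sherman} (the result recalled in the paragraph preceding \thmref{Gersten-transformation-1} above), $\zeta_{\cA_{\mathrm{ss}}}\colon\K_1^{\mathrm{B}}(\cA_{\mathrm{ss}})\to\K_1^{\mathrm{Q}}(\cA_{\mathrm{ss}})$ is therefore an isomorphism.

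Because every object of $\cA$ has a composition series with simple (hence semisimple) subquotients, Quillen's d\'evissage theorem \cite{MR0338129} yields that $i_*\colon\K_n^{\mathrm{Q}}(\cA_{\mathrm{ss}})\to\K_n^{\mathrm{Q}}(\cA)$ is an isomorphism for every $n\geqslant 0$, in particular for $n=1$. I would next show that the companion map $i_*\colon\K_1^{\mathrm{B}}(\cA_{\mathrm{ss}})\to\K_1^{\mathrm{B}}(\cA)$ is \emph{surjective}, by induction on the length $\ell(A)$ of a class $[A,\alpha]\in\K_1^{\mathrm{B}}(\cA)$. If $A$ is semisimple there is nothing to do; otherwise $S=\operatorname{soc}(A)$ is a proper, nonzero, semisimple subobject, and $\alpha(S)=S$ because any automorphism carries simple subobjects to simple subobjects. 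The exact sequence $0\to(S,\alpha|_S)\to(A,\alpha)\to(A/S,\bar{\alpha})\to 0$ in $\Omega\cA$ yields, via the defining SES-relation recalled in \resref{K1B}, the identity $[A,\alpha]=[S,\alpha|_S]+[A/S,\bar{\alpha}]$ in $\K_1^{\mathrm{B}}(\cA)$; the first summand lies in the image of $i_*$ because $S\in\cA_{\mathrm{ss}}$, and the second by the induction hypothesis since $\ell(A/S)<\ell(A)$.

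Naturality of $\zeta$ now gives the commutative square
\[
\xymatrix@C=3em{
\K_1^{\mathrm{B}}(\cA_{\mathrm{ss}}) \ar[r]^-{i_*} \ar[d]_-{\zeta_{\cA_{\mathrm{ss}}}}^-{\cong} & \K_1^{\mathrm{B}}(\cA) \ar[d]^-{\zeta_\cA} \\
\K_1^{\mathrm{Q}}(\cA_{\mathrm{ss}}) \ar[r]_-{i_*}^-{\cong} & \K_1^{\mathrm{Q}}(\cA)\rlap{\,,}
}
\]
and a routine diagram chase then shows that $\zeta_\cA$ is bijective: any class in $\K_1^{\mathrm{Q}}(\cA)$ is hit by lifting through the bottom isomorphism, then through $\zeta_{\cA_{\mathrm{ss}}}$, and finally pushing across the top; conversely, any element of $\ker\zeta_\cA$ can, by surjectivity of the top map, be written as $i_*(x')$, whence $\zeta_{\cA_{\mathrm{ss}}}(x')=0$ by injectivity of the bottom isomorphism, forcing $x'=0$ and hence $i_*(x')=0$. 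The main obstacle in the whole argument is the socle-filtration step for $\K_1^{\mathrm{B}}$; everything else is either quoted from the literature or a direct diagram chase.
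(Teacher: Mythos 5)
Your proof is correct, and it differs from the paper's in two genuinely interesting ways.

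First, for the semisimple subcategory $\cA_{\mathrm{ss}}$ you invoke Sherman's Theorem~3.3 directly, which applies to any semisimple exact category. The paper instead only cites the special case $\zeta_{\proj A}$ (its \thmref{Gersten-transformation-1}), and then reduces $\cA_{\mathrm{ss}}$ to a finite product $(\add S_1)\times\cdots\times(\add S_n)\simeq\proj D_1\times\cdots\times\proj D_n$ with $D_i=\End_\cA(S_i)^{\mathrm{op}}$ a division ring, using a product-compatibility lemma for $\zeta$. That reduction is precisely where the hypothesis ``finitely many simple objects'' enters the paper's proof; your argument, relying on the stronger form of Sherman's theorem, appears not to use that hypothesis at all. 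Second, for the inclusion $i\colon\cA_{\mathrm{ss}}\hookrightarrow\cA$ the paper simply quotes \emph{both} Bass' and Quillen's d\'evissage theorems to get that $\K_1^{\mathrm{B}}(i)$ and $\K_1^{\mathrm{Q}}(i)$ are isomorphisms, then concludes by the two-out-of-three property in the commutative square. You instead observe that only \emph{surjectivity} of $\K_1^{\mathrm{B}}(i)$ is needed (given that the other three arrows are isomorphisms), and you supply a short hands-on proof of that surjectivity via the socle filtration, using that $\K_1^{\mathrm{B}}(\cA)$ is generated by the classes $[A,\alpha]$ and that $[A,\alpha]=[S,\alpha|_S]+[A/S,\bar\alpha]$ for $S=\operatorname{soc}(A)$. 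This is more elementary than Bass' d\'evissage, though it establishes less. One small bonus: your remark that $\cA_{\mathrm{ss}}$ is abelian ``being closed under subobjects, quotients, and finite direct sums'' is the correct justification; the paper's claim that $\cA_{\mathrm{ss}}$ is a \emph{Serre} subcategory is not quite right, since $\cA_{\mathrm{ss}}$ is generally not closed under extensions, although the conclusion (abelian) still holds.
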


\begin{proof}
  We begin with a general observation. Given skeletally small exact
  categories $\cC_1$ and $\cC_2$, there are exact projection functors
  \smash{$p_j \colon \cC_1 \times \cC_2 \to \cC_j$} (\mbox{$j =1,2$}).
  From the ``elementary properties'' of Quillen's $\K $-groups listed
  in \cite[\S 2]{MR0338129}, it follows that the homomorphism
  $(\K_1^{\mathrm{Q}}(p_1),\K_1^{\mathrm{Q}}(p_2)) \colon
  \K_1^{\mathrm{Q}}(\cC_1 \times \cC_2) \to \K_1^{\mathrm{Q}}(\cC_1)
  \oplus \K_1^{\mathrm{Q}}(\cC_2)$ is an isomorphism.  A similar
  argument shows that
  $(\K_1^{\mathrm{B}}(p_1),\K_1^{\mathrm{B}}(p_2))$ is an isomorphism.
  Since \smash{$\zeta \colon \K_1^{\mathrm{B}} \to \K_1^{\mathrm{Q}}$}
  is a natural transformation, it follows that $\zeta_{\cC_1 \times \cC_2}$ is an isomorphism if and
  only if $\zeta_{\cC_1}$ and $\zeta_{\cC_2}$ are isomorphisms.

  Denote by $\cA_\mathrm{ss}$ the full subcategory of $\cA$ consisting
  of all semisimple objects. Note that $\cA_\mathrm{ss}$ is a Serre
  subcategory of $\cA$, and hence $\cA_\mathrm{ss}$ is itself an
  abelian category. Let $i \colon \cA_\mathrm{ss} \hookrightarrow \cA$
  be the (exact) inclusion and consider the commutative diagram,
  \begin{displaymath}
    \xymatrix@C=10ex{
      \K_1^{\mathrm{B}}(\cA_\mathrm{ss}) 
      \ar[d]_-{\zeta_{\cA_\mathrm{ss}}}
      \ar[r]^-{\K_1^{\mathrm{B}}(i)}_-{\cong}
      & \K_1^{\mathrm{B}}(\cA)\,\phantom{.} 
      \ar@<-2pt>[d]^-{\zeta_{\cA}} \\
      \K_1^{\mathrm{Q}}(\cA_\mathrm{ss})
      \ar[r]^-{\K_1^{\mathrm{Q}}(i)}_-{\cong}
      & \K_1^{\mathrm{Q}}(\cA)\,.
    }
  \end{displaymath}
  Since $\cA$ is a length category, Bass' and Quillen's devissage
  theorems \cite[VIII\S3 thm. (3.4)(a)]{MR0249491} and \cite[\S5
  thm.~4]{MR0338129} show that $\K_1^{\mathrm{B}}(i)$ and
  $\K_1^{\mathrm{Q}}(i)$ are isomorphisms.  Hence, it suffices to
  argue that $\zeta_{\cA_\mathrm{ss}}$ is an isomorphism. By
  assumption there is a finite set $\{S_1,\ldots,S_n\}$ of
  representatives of the isomorphism classes of simple
  objects~in~$\cA$. Note that every object $A$ in $\cA_\mathrm{ss}$
  has unique decomposition $A=S_1^{a_1}\oplus\cdots\oplus S_n^{a_n}$
  where $a_1,\ldots,a_n \in \mathbb{N}_0$; we used here the assumption
  that $A$ has finite length to conclude that the cardinal numbers
  $a_i$ must be finite.  Since one has $\Hom_\cA(S_i,S_j)=0$ for $i
  \neq j$, it follows that there is an equivalence of abelian
  categories,
  \begin{displaymath}
    \cA_\mathrm{ss} 
    \simeq
    (\add\,S_1) \times \cdots \times (\add\,S_n)\;.
  \end{displaymath}
  Consider the ring \mbox{$D_i = \End_\cA(S_i)^{\mathrm{op}}$}.  As
  $S_i$ is simple, Schur's lemma gives that $D_i$ is a division ring.
  It easy to see that the functor \mbox{$\Hom_\cA(S_i,-) \colon \cA
    \to \Mod\,D_i$} induces an equivalence $\add\,S_i \simeq
  \proj\,D_i$.
  By \thmref{Gersten-transformation-1} the maps
  $\zeta_{\proj\,D_1},\ldots,\zeta_{\proj\,D_n}$ are isomorphisms, so
  it follows from the equivalence above, and the general observation
  in the beginning af the proof, that $\zeta_{\cA_\mathrm{ss}}$ is an
  isomorphism, as desired.
\end{proof}

Note that in this section, superscripts ``C'' (for classical), ``B''
(for Bass), and ``Q'' (for Quillen) have been used to distinguish between
various $\K$-groups. In the rest of the paper, $\K$-groups without
superscripts refer to Quillen's $\K$-groups.

\section{Coherent Pairs}
\label{sec:coherent}

We recall a few results and notions from the paper \cite{MR816889} by
Auslander and Reiten which are central in the proof of our main
\thmref{1}.  Throughout this section, $\cA$ denotes a skeletally small
additive category.

\begin{dfn}
  \label{dfn:weak}
  A \emph{pseudo} (or \emph{weak}) kernel of a morphism $g \colon A
  \to A'$ in $\cA$ is~a morphism $f \colon A'' \to A$ in $\cA$ such
  that $gf=0$, and which satisfies that every dia\-gram in $\cA$ as
  below can be completed (but not necessarily in a unique way).
  \begin{displaymath}
    \xymatrix{
      & B \ar[d]_-{h} \ar[dr]^-{0} \ar@{-->}[dl] & \\
      A'' \ar[r]_-{f} & A \ar[r]_-{g} & A'.
    }
  \end{displaymath}
  We say that $\cA$ \emph{has pseudo kernels} if every morphism in
  $\cA$ has a pseudo kernel.
\end{dfn}

\begin{obs}
  \label{obs:precover}
  Let $\cA$ be a full additive subcategory of an abelian category
  $\cM$. \new{An \emph{$\cA$-precover} of an object  $M \in \cM$ is a morphism $u \colon A \to M$ with $A \in \cA$ with the property that for every morphism $u' \colon A' \to M$ with $A' \in \cA$ there exists a (not necessarily unique) morphism $v \colon A' \to A$ such that $uv=u'$. Following \cite[def.~5.1.1]{rha} we
say that $\cA$ is precovering (or contravariantly finite) in $\cM$ if every object $M \in \cM$ has an $\cA$-precover. In this case, $\cA$ has pseudo kernels.} Indeed,
  if $i \colon K \to A$ is the kernel in $\cM$ of $g \colon A \to A'$
  in $\cA$, and if $f \colon A'' \to K$ is an $\cA$-precover of $K$,
  then $if \colon A'' \to A$ is a pseudo kernel of $g$.
\end{obs}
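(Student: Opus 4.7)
The plan is to exhibit a pseudo kernel by pairing the kernel taken in $\cM$ with a precover in $\cA$. Given a morphism $g \colon A \to A'$ in $\cA$, I would first take its kernel $i \colon K \to A$ in the ambient abelian category $\cM$, then invoke the precovering hypothesis to obtain an $\cA$-precover $f \colon A'' \to K$ of $K$, and propose the composite $if \colon A'' \to A$ as a pseudo kernel of $g$ in $\cA$. Note that $A'' \in \cA$, so this is a legitimate morphism in $\cA$, even though $i$ itself need not lie in $\cA$.

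To verify the two required properties, the identity $g(if) = (gi)f = 0$ is immediate from the fact that $i$ is a kernel of $g$ in $\cM$. For the lifting property, suppose $h \colon B \to A$ is any morphism in $\cA$ with $gh = 0$. Viewed in $\cM$, the universal property of the kernel $i$ produces a unique morphism $h' \colon B \to K$ with $ih' = h$. Applying the defining property of the $\cA$-precover $f$ to the object $B \in \cA$ and the test morphism $h'$ yields a morphism $v \colon B \to A''$ in $\cA$ with $fv = h'$, and then $(if)v = i(fv) = ih' = h$, which is exactly the factorization required in \dfnref{weak}.

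No step in this argument presents a real obstacle; it is a routine combination of the universal property of a kernel in $\cM$ with the defining property of precovers. The one subtlety worth recording is that the lifting $v$ is not required to be unique, since $f$ is only a precover rather than a cover; this is precisely the feature of the construction that produces a pseudo kernel rather than a genuine kernel internal to $\cA$.
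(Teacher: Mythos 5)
Your argument is correct and is exactly the construction the paper sketches in the observation itself: take the kernel $i$ in $\cM$, precover it by $f$ in $\cA$, and check that $if$ satisfies the two defining properties of a pseudo kernel via the universal property of the kernel followed by the lifting property of the precover. You have simply spelled out the routine verification that the paper leaves implicit.
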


\begin{dfn}
  \label{res:coherent}
  Let $\cB$ be a full additive subcategory of $\cA$. Auslander and
  Reiten \cite{MR816889} call $(\cA,\cB)$ a \emph{coherent pair} if
  $\cA$ has pseudo kernels in the sense of \dfnref{weak}, and $\cB$ is
  precovering in $\cA$.
\end{dfn}

If $(\cA,\cB)$ is a coherent pair then also $\cB$ has pseudo kernels by
\cite[prop.~1.4(a)]{MR816889}.

\begin{dfn}
  \label{dfn:functor}
  Write $\Mod\,\cA$ for the abelian category of additive contravariant
  functors \mbox{$\cA \to \Ab$}, where $\Ab$ is the category of
  abelian groups. Denote by $\mod\,\cA$ the full subcategory of
  $\Mod\,\cA$ consisting of finitely presented functors.
\end{dfn}

\begin{ipg}
  \label{res:localization}
  If the category $\cA$ has pseudo kernels then $\mod\,\cA$ is
  abelian, and the inclusion functor $\mod\,\cA \to \Mod\,\cA$ is
  exact, see \cite[prop.~1.3]{MR816889}.

  If $(\cA,\cB)$ is a coherent pair, see \resref{coherent}, then the
  exact restriction $\Mod\,\cA \to \Mod\,\cB$ maps $\mod\,\cA$ to
  $\mod\,\cB$ by \cite[prop.~1.4(b)]{MR816889}. In this case, there
  are functors,
  \begin{equation}
    \label{eq:ir}
    \Ker r \stackrel{i}{\longrightarrow} 
    \mod\,\cA \stackrel{r}{\longrightarrow} \mod\,\cB\;,
  \end{equation}
  where $r$ is the restriction and $i$ the inclusion functor. The
  kernel of $r$, that is,
  \begin{displaymath}
    \Ker r = \{ F \in \mod\,\cA \ | \ F(B)=0 \text{ for all } B \in \cB \,\}\;,
  \end{displaymath}
  is a Serre subcategory of the abelian category $\mod\,\cA$. The
  quotient $(\mod\,\cA)/(\Ker r)$, in the sense of
  Gabriel~\cite{PGb62}, is equivalent to the category $\mod\,\cB$, and
  the canonical functor $\mod\,\cA \to (\mod\,\cA)/(\Ker r)$ may be
  identified with $r$. These assertions are proved in
  \cite[prop.~1.5]{MR816889}. Therefore \eqref{ir} induces by
  Quillen's localization theorem \mbox{\cite[\S5 thm.~5]{MR0338129}} a
  long exact sequence of $\K$-groups,
  \begin{equation}
    \label{eq:long-exact}
    \begin{gathered}
    \xymatrix@R=2ex{ 
      \cdots \ar[r] & \K_n(\Ker r) \ar[r]^-{\K_n(i)} &
      \K_n(\mod\,\cA) \ar[r]^-{\K_n(r)} & 
      \K_n(\mod\,\cB) \ar[r] & \cdots \\
      \cdots \ar[r] & \K_0(\Ker r) \ar[r]^-{\K_0(i)} &
      \K_0(\mod\,\cA) \ar[r]^-{\K_0(r)} & 
      \K_0(\mod\,\cB) \ar[r] & 0\,.
    }
    \end{gathered}
  \end{equation}
\end{ipg}

\section{Semilocal Rings}
\label{sec:semilocal}

A ring $A$ is semilocal if $A/\J(A)$ is semisimple.  Here $\J(A)$ is the
Jacobson radical of $A$. If $A$ is commutative then this definition is
equivalent to $A$ having only finitely many maximal ideals; see Lam
\cite[prop.~(20.2)]{MR1838439}.

\begin{lem}
  \label{lem:End-semilocal}
  Let $R$ be a commutative noetherian semilocal ring, and let \mbox{$M
    \neq 0$} be a finitely generated $R$-module. Then the ring
  $\End_R(M)$ is semilocal.
\end{lem}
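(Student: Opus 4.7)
The plan is to reduce the problem modulo $\J(R)$: I want to show that $E := \End_R(M)$ modulo $\J(R)E$ is an \emph{artinian} ring, and that $\J(R)E \subseteq \J(E)$, so that $E/\J(E)$ arises as the semisimple quotient of an artinian ring.

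First I would show that $E$ is finitely generated as an $R$-module. Since $R$ is noetherian and $M$ is finitely generated, $M$ is finitely presented, so there is an exact sequence $R^m \to R^n \to M \to 0$. Applying $\Hom_R(-,M)$ yields an exact sequence $0 \to E \to M^n \to M^m$, exhibiting $E$ as an $R$-submodule of the finitely generated (hence noetherian) module $M^n$. Consequently $E/\J(R)E$ is finitely generated over $R/\J(R)$, which is a finite product of fields; in particular it is an artinian $R/\J(R)$-module, and since every two-sided ideal of the ring $E/\J(R)E$ is in particular an $R/\J(R)$-submodule, the ring $E/\J(R)E$ is artinian.

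The main step is to prove the containment $\J(R)E \subseteq \J(E)$. For this it suffices to show that $1 - rgf$ is invertible in $E$ for every $r \in \J(R)$ and every $f,g \in E$. Setting $h = rgf$, one has $h(M) \subseteq \J(R)M$, and therefore
\begin{equation*}
  M \;=\; (1-h)(M) + h(M) \;\subseteq\; (1-h)(M) + \J(R)M.
\end{equation*}
Since $M$ is finitely generated over $R$, Nakayama's lemma forces $(1-h)(M) = M$, so $1-h$ is surjective. Because $R$ is noetherian and $M$ is finitely generated, $M$ is a noetherian module, and a surjective endomorphism of a noetherian module is automatically injective (the kernel chain $\ker(1-h)^n$ stabilizes). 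Thus $1-h$ is an automorphism of $M$, i.e.\ a unit in $E$; this gives $\J(R)E \subseteq \J(E)$.

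To conclude, observe that any ideal $I \subseteq \J(E)$ satisfies $\J(E/I) = \J(E)/I$, because maximal left ideals of $E/I$ are in bijection with those of $E$ containing $I$, and every maximal left ideal of $E$ contains $\J(E) \supseteq I$. Applying this with $I = \J(R)E$ and using that $E/\J(R)E$ is artinian, we get
\begin{equation*}
  E/\J(E) \;=\; (E/\J(R)E)\big/\J(E/\J(R)E),
\end{equation*}
which is the quotient of an artinian ring by its Jacobson radical and hence semisimple. Therefore $E$ is semilocal, as claimed. The essential obstacle is the containment $\J(R)E \subseteq \J(E)$; once that is in place, the rest is a formal reduction to the well-known artinian case.
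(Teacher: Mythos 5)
Your proof is correct and follows the same reduction as the paper: establish that $\End_R(M)$ is a module-finite $R$-algebra, then invoke the fact that module-finite algebras over a commutative semilocal ring are semilocal --- the paper simply cites Lam, prop.\ (20.6) for this second step, while you reprove it directly, using the action on $M$ together with Nakayama's lemma to get $\J(R)E \subseteq \J(E)$. One small wording slip: to conclude that $E/\J(R)E$ is an artinian \emph{ring} you need DCC on one-sided (not merely two-sided) ideals, but your observation applies verbatim there since every left ideal of $E/\J(R)E$ is also an $R/\J(R)$-submodule.
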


\begin{proof}
  As $R$ is commutative and noetherian, $\End_R(M)$ is a module-finite
  $R$-algebra. Since $R$ is semilocal, the assertion now follows from
  \cite[prop.~(20.6)]{MR1838439}.
\end{proof}

\begin{ipg}
  \label{res:discussion}
  Denote by $A^*$ the group of units in a ring $A$, and let
  \mbox{$\vartheta_A \colon A^* \to \K_1^{\mathrm{C}}(A)$} be the
  com\-posite of the group homomorphisms,
  \begin{equation}
    \label{eq:vartheta}
    A^* \cong \GL_1(A)
    \hookrightarrow \GL(A) \twoheadrightarrow \GL(A)_{\mathrm{ab}} =
    \K_1^{\mathrm{C}}(A)\;.
  \end{equation}
  Some authors refer to $\vartheta_A$ as the Whitehead determinant.
  If $A$ is semilocal, then $\vartheta_A$ is surjective by Bass
  \cite[V\S9 thm.~(9.1)]{MR0249491}. As the group
  $\K_1^{\mathrm{C}}(A)$ is abelian one has $[A^*,A^*] \subseteq \Ker
  \vartheta_A$, and we write \mbox{$\theta_A \colon A^*_\mathrm{ab}
    \to \K_1^{\mathrm{C}}(A)$} for the induced homomorphism.

  Vaserstein \cite{MR0267009} showed that the inclusion $[A^*,A^*]
  \subseteq \Ker \vartheta_A$ is strict for the semi\-local ring
  $A=\M_2(\mathbb{F}_2)$ where $\mathbb{F}_2$ is the field with two
  elements. In \cite[thm.~3.6(a)]{MR0267009} it is shown that if $A$
  is semilocal, then $\Ker \vartheta_A$ is the subgroup of $A^*$
  generated by elements of the form $(1+ab)(1+ba)^{-1}$ where $a,b \in
  A$ and $1+ab \in A^*$.
\end{ipg}

If $A$ is semilocal, that is, $A/\J(A)$ is semisimple, then by the
Artin--Wedderburn theorem there is an isomorphism of rings,
\begin{displaymath}
  A/\J(A) \cong \M_{n_1}(D_1) \times \cdots \times \M_{n_t}(D_t)\;,
\end{displaymath}
where $D_1,\ldots,D_t$ are division rings, and $n_1,\ldots,n_t$ are
natural numbers all of which are uniquely determined by $A$.  The next
result is due to Vaserstein \cite[thm.~2]{MR2111217}.

\begin{thm}
  \label{thm:V2}
  Let $A$ be semilocal and write $A/\J(A) \cong \M_{n_1}(D_1) \times
  \cdots \times \M_{n_t}(D_t)$. If none of the $\M_{n_i}(D_i)$'s is
  $\M_2(\mathbb{F}_2)$, and at most one of the $\M_{n_i}(D_i)$'s is
  $\M_1(\mathbb{F}_2)=\mathbb{F}_2$ then one has $\Ker \vartheta_A =
  [A^*,A^*]$. In particular, $\vartheta_A$ induces an isomorphism,
  \begin{displaymath}
    \smash{\theta_A \colon
    A^*_{\mathrm{ab}} \stackrel{\cong}{\longrightarrow}
    \K_1^{\mathrm{C}}(A)\;.} \tag*{\qed}
  \end{displaymath}
\end{thm}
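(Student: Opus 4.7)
Since $\K_1^{\mathrm{C}}(A)=\GL(A)_\mathrm{ab}$ is abelian, the inclusion $[A^*,A^*] \subseteq \Ker \vartheta_A$ holds automatically, and because $\vartheta_A$ is surjective for semilocal $A$ by \resref{discussion}, the claimed isomorphism $\theta_A$ follows formally from the equality $\Ker \vartheta_A = [A^*,A^*]$. Thus the real content is the reverse inclusion $\Ker \vartheta_A \subseteq [A^*,A^*]$, and this is what I would aim to prove.

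The plan is to use the explicit generators for $\Ker \vartheta_A$ recorded in \resref{discussion}: the group is generated by the Whitehead-type elements $(1+ab)(1+ba)^{-1}$ with $a,b\in A$ and $1+ab\in A^*$. It therefore suffices to show that each such generator is a commutator in $A^*$. I would first reduce the problem to the semisimple quotient $\bar A = A/\J(A)$. Units lift along $A \twoheadrightarrow \bar A$, and, using the block structure of a non-trivial matrix factor of $\bar A$ (which exists under the hypotheses), every element of $1+\J(A)$ can be exhibited as a commutator, so $1+\J(A) \subseteq [A^*,A^*]$; combined with the fact that the distinguished set of generators of $\Ker\vartheta_A$ is stable under the quotient, this reduces the verification to $\bar A$.

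Next I would use the Artin--Wedderburn decomposition $\bar A = \M_{n_1}(D_1)\times\cdots\times \M_{n_t}(D_t)$. Units, commutator subgroups, and the kernels $\Ker\vartheta$ all distribute over such a product, so it is enough to treat a single simple factor $B=\M_n(D)$. For $B$, the problem reduces to the classical computation that $[\GL_n(D),\GL_n(D)]$ coincides with the kernel of the Dieudonné determinant $\det^{\mathrm{Dieu}}\colon \GL_n(D)\twoheadrightarrow D^*_\mathrm{ab}$, together with the observation that $(I+ab)(I+ba)^{-1}$ has trivial Dieudonné determinant and so lies in that kernel.

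The hard part will be this last step in the exceptional small cases: the identity $[\GL_n(D),\GL_n(D)] = \Ker(\det^{\mathrm{Dieu}})$ fails for $\M_2(\mathbb{F}_2)$, where $\GL_2(\mathbb{F}_2)\cong S_3$ has an abelianization strictly larger than $\mathbb{F}_2^*$, and additional care is needed when more than one $\mathbb{F}_2$ factor appears, because these factors have trivial unit group and provide no room for the conjugation tricks that normally realize a Whitehead element as a commutator. The hypotheses on the $\M_{n_i}(D_i)$'s in the statement are precisely those that exclude these pathologies. This is Vaserstein's theorem \cite[thm.~2]{MR2111217}, which we cite without further elaboration.
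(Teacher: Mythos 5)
The paper does not prove this result: the \qed immediately following the statement, together with the sentence preceding it, indicates that the theorem is being imported verbatim from Vaserstein \cite[thm.~2]{MR2111217}. Since you ultimately also cite Vaserstein, your bottom line matches the paper's. However, the proof sketch you interpolate along the way contains two concrete errors and would mislead a reader into thinking it outlines a working argument.

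First, the assertion that the hypotheses guarantee $\bar A=A/\J(A)$ has a non-trivial matrix factor is wrong: the hypotheses allow $\bar A$ to be a single division ring other than $\mathbb{F}_2$ (e.g.\ $A=\mathbb{F}_3$, or any commutative local ring with residue field $\neq\mathbb{F}_2$), where no matrix block is available. Second, and more seriously, the inclusion $1+\J(A)\subseteq[A^*,A^*]$ is false in general. For a commutative local ring $(A,\mathfrak m)$ with $\mathfrak m\neq 0$ the commutator subgroup $[A^*,A^*]$ is trivial while $1+\mathfrak m$ is not; this does not contradict the theorem, because for commutative $A$ every Whitehead generator $(1+ab)(1+ba)^{-1}$ equals $1$, but it does destroy the proposed reduction to the quotient $\bar A$. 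Vaserstein's argument does not pass through such a blanket inclusion; it works with the explicit generators $(1+ab)(1+ba)^{-1}$ directly in $A$, using matrix identities and a case analysis governed by the structure of the semisimple quotient, and the small exceptional cases $\M_2(\mathbb{F}_2)$ and multiple $\mathbb{F}_2$-factors enter precisely because those manipulations break down there.
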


\begin{rmk}
  \label{rmk:char-neq-2}
  Note that if $A$ is a semilocal ring which is an algebra over a
  field $k$ with cha\-rac\-te\-ris\-tic \mbox{$\neq 2$}, then the
  hypothesis in \thmref{V2} is satisfied.
\end{rmk}

If $A$ is a commutative semilocal ring, then $\Ker \vartheta_A$ and
the commutator subgroup
\mbox{$[A^*,A^*]\mspace{-2mu}=\mspace{-2mu}\{1\}$} are identical,
i.e.~the surjective homomorphism \mbox{$\vartheta_A
  \mspace{-2mu}=\mspace{-2mu} \theta_A \colon \mspace{-2mu} A^* \to
  \K_1^{\mathrm{C}}(A)$} is an isomorphism.  Indeed, the determinant
homomorphisms \mbox{$\det_n \colon \GL_n(A) \to A^*$} induce a
homomorphism \mbox{$\det_A \colon \K_1^{\mathrm{C}}(A) \to A^*$} that
evidently satisfies \mbox{$\det_A\theta_A = 1_{A^*}$}. Since
$\theta_A$ is surjective, it follows that $\theta_A$ is an isomorphism
with $\theta_A^{-1} = \det_A$.

\begin{dfn}
  \label{dfn:det}
  Let $A$ be a ring for which the homomorphism \mbox{$\theta_A \colon
    A^*_{\mathrm{ab}} \to \K_1^{\mathrm{C}}(A)$} from
  \resref{discussion} is an isomorphism; for example, $A$ could be a
  commutative semilocal ring or a noncommutative semilocal ring
  satisfying the assumptions in \thmref{V2}.  The inverse
  \smash{$\theta_A^{-1}$} is denoted by $\det_A$, and we call it the
  \emph{generalized determinant}.
\end{dfn}

\begin{rmk}
  \label{rmk:cdot}
  Let $\xi$ be an $m \times n$ and let $\chi$ be an $n \times p$ matrix with
  entries in a ring $A$.  Denote by ``$\cdot$'' the product $\M_{m
    \times n}(A^\mathrm{op}) \times \M_{n \times p}(A^\mathrm{op}) \to
  \M_{m \times p}(A^\mathrm{op})$. Then
  \begin{displaymath}
    (\xi \cdot \chi)^T = \chi^T\xi^T\;,
  \end{displaymath}
  where $\chi^T\xi^T$ is computed using the product $\M_{p \times
    n}(A) \times \M_{n \times m}(A) \to \M_{p \times m}(A)$.  Thus,
  transposition \mbox{$(-)^T \colon \GL_n(A^\mathrm{op}) \to
    \GL_n(A)$} is an anti-isomorphism (this is also noted in
  \cite[V\S7]{MR0249491}), which induces an isomorphism \mbox{$(-)^T
    \colon \K_1^{\mathrm{C}}(A^\mathrm{op}) \to
    \K_1^{\mathrm{C}}(A)$}.
\end{rmk}

\begin{lem}
  \label{lem:det-T}
  Let $A$ be a ring for which the generalized determinant $\det_A =
  \theta_A^{-1}$ exists; cf.~\dfnref{det}.  For every invertible
  matrix $\xi$ with entries in $A$ one has an equality
  $\det_{A^\mathrm{op}}(\xi^T) = \det_A(\xi)$ in the abelian group
  $(A^\mathrm{op})^*_\mathrm{ab} = A^*_\mathrm{ab}$.
\end{lem}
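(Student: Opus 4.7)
My plan is to reduce the claim to a compatibility statement for the maps $\theta_A$ and $\theta_{A^\mathrm{op}}$, and then verify that compatibility on the generating set $A^* \subseteq \GL(A)$.

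The key ingredient is the isomorphism $T \colon \K_1^{\mathrm{C}}(A^\mathrm{op}) \xrightarrow{\cong} \K_1^{\mathrm{C}}(A)$ from \rmkref{cdot}, induced by sending $\eta \in \GL_n(A^\mathrm{op})$ to $\eta^T \in \GL_n(A)$. Since by definition $\det_A = \theta_A^{-1}$ and $\det_{A^\mathrm{op}} = \theta_{A^\mathrm{op}}^{-1}$, establishing the asserted equality $\det_{A^\mathrm{op}}(\xi^T) = \det_A(\xi)$ in $A^*_\mathrm{ab} = (A^\mathrm{op})^*_\mathrm{ab}$ is equivalent to proving that the square
\begin{displaymath}
  \xymatrix@C=3em{
    A^*_\mathrm{ab} \ar[r]^-{\theta_A} \ar@{=}[d] &
    \K_1^{\mathrm{C}}(A)\,\phantom{.} \\
    (A^\mathrm{op})^*_\mathrm{ab} \ar[r]^-{\theta_{A^\mathrm{op}}} &
    \K_1^{\mathrm{C}}(A^\mathrm{op}) \ar[u]_-{T}
  }
\end{displaymath}
commutes, where the left vertical identification is the canonical one (note that the anti-isomorphism of unit groups $A^* \to (A^\mathrm{op})^*$ given by the identity on underlying sets becomes an honest isomorphism upon abelianization).

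To check commutativity I would chase an element $a \in A^*$ around the diagram. The map $\theta_A$ sends $a$ to the class of the $1\!\times\!1$ matrix $(a) \in \GL_1(A)$, while $\theta_{A^\mathrm{op}}$ sends $a$, regarded as an element of $(A^\mathrm{op})^* = A^*$, to the class of $(a) \in \GL_1(A^\mathrm{op})$. Since the transpose of a $1\!\times\!1$ matrix is the same matrix, $T$ maps the second class to the first; thus the two images in $\K_1^{\mathrm{C}}(A)$ coincide. Because $\theta_A$ arises from the composite $A^* \cong \GL_1(A) \hookrightarrow \GL(A) \twoheadrightarrow \GL(A)_\mathrm{ab} = \K_1^{\mathrm{C}}(A)$, it is generated, as a group homomorphism, by its restriction to $A^*$; hence the diagram commutes on all of $A^*_\mathrm{ab}$, and inverting the horizontal arrows gives the claim for an arbitrary $\xi \in \GL_n(A)$.

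The argument is essentially bookkeeping; the only mildly delicate points are the identification $A^*_\mathrm{ab} = (A^\mathrm{op})^*_\mathrm{ab}$ and keeping straight which matrices belong to $\GL(A)$ versus $\GL(A^\mathrm{op})$. I do not foresee a substantive obstacle.
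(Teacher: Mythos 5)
Your proof is correct and takes essentially the same route as the paper: the paper draws the same commutative square (with the transpose map on the bottom rather than the side) and simply declares its commutativity to be clear, whereas you verify it explicitly by noting that both $\theta_A$ and $\theta_{A^\mathrm{op}}$ send $a \in A^*$ to the class of the $1\times 1$ matrix $(a)$, which is its own transpose. The added check is a welcome bit of rigor, but the substance is identical.
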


\begin{proof}
  Clearly, there is a commutative diagram,
  \begin{displaymath}
    \xymatrix{
      A^*_\mathrm{ab} \ar@{=}[r] \ar[d]_-{\theta_A}^-{\cong}
      & (A^\mathrm{op})^*_\mathrm{ab} 
      \ar[d]^-{\theta_{A^\mathrm{op}}}_-{\cong} \\
      \K_1^{\mathrm{C}}(A) \ar[r]^-{\cong}_-{(-)^T} 
      & \K_1^{\mathrm{C}}(A^\mathrm{op})\,,
    }
  \end{displaymath}
  It follows that one has \mbox{$\theta_{A^\mathrm{op}}^{-1} \circ
    (-)^T = \theta_A^{-1}$}, that is, $\det_{A^\mathrm{op}} \circ\,
  (-)^T = \det_A$.
\end{proof}

\section{Some Useful Functors}
\label{sec:functors}

Throughout this section, $A$ is a ring and $M$ is a fixed left
$A$-module.  We denote by \mbox{$E=\End_A(M)$} the endomorphism ring
of $M$.  Note that \mbox{$M={}_{A,E}M$} has a natural
left-$A$-left-$E$--bimodule structure.

\begin{ipg}
  \label{res:projectivization}
  There is a pair of adjoint functors,
  \begin{displaymath}
    \xymatrix@C=15ex{
      \Mod\,A \ar@<0.7ex>[r]^-{\Hom_A(M,-)} & \Mod(E^\mathrm{op})\;.
      \ar@<0.7ex>[l]^-{-\otimes_EM}
    }
  \end{displaymath}
  It is easily seen that they restrict to a pair of quasi-inverse
  equivalences,
  \begin{displaymath}
    \xymatrix@C=15ex{
      \add_AM \ar@<1ex>[r]^-{\Hom_A(M,-)}_-{\simeq} & \proj(E^\mathrm{op})\;.
      \ar@<1ex>[l]^-{-\otimes_EM}
    }
  \end{displaymath}
  Auslander referred to this phenomenon as \emph{projectivization};
  see \cite[I\S2]{rta}.
\end{ipg}

Let \mbox{$F \in \Mod(\add_AM)$}, that is, \mbox{$F \colon \add_AM \to
  \Ab$} is a contravariant additive functor, see \dfnref{functor}. The
compatible $E$-module structure on the given $A$-module $M$ induces an
$E^\mathrm{op}$-module structure on the abelian group $FM$ which is
given by $z\alpha = (F\alpha)(z)$ for $\alpha \in E$ and $z \in FM$.

\begin{prp}
  \label{prp:e-f}
  There are quasi-inverse equivalences of abelian categories,
  \begin{displaymath}
    \xymatrix@C=15ex{
      \Mod(\add_AM) \ar@<1ex>[r]^-{e_M}_-{\simeq} & \Mod(E^\mathrm{op})\;,
      \ar@<1ex>[l]^-{f_M}
    }
  \end{displaymath}
  where $e_M$ (evaluation) and $f_M$ (functorfication) are defined as
  follows,
  \begin{align*}
    e_M(F) = FM \qquad \textnormal{and} \qquad
    f_M(Z) = Z \otimes_E\Hom_A(-,M)|_{\add_AM}\;,
  \end{align*}
  for $F$ in $\Mod(\add_AM)$ and $Z$ in $\Mod(E^\mathrm{op})$.  They
  restrict to quasi-inverse equivalences between categories of
  finitely presented objects,
  \begin{displaymath}
    \xymatrix@C=15ex{
      \mod(\add_AM) \ar@<1ex>[r]^-{e_M}_-{\simeq} & \mod(E^\mathrm{op})\;.
      \ar@<1ex>[l]^-{f_M}
    }
  \end{displaymath}
\end{prp}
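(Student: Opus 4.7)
I would deduce the equivalence by chaining two well-established identifications. The projectivization of \resref{projectivization} yields $\add_A M \simeq \proj(E^\mathrm{op})$ via $\Hom_A(M,-)$ with quasi-inverse $-\otimes_E M$. Precomposition with $-\otimes_E M$ therefore induces an equivalence of contravariant functor categories $\Mod(\add_A M) \simeq \Mod(\proj(E^\mathrm{op}))$. Composing with the standard equivalence $\Mod(\proj(E^\mathrm{op})) \simeq \Mod(E^\mathrm{op})$ given by evaluation at the generator $E$ produces the asserted equivalence; tracing a functor $F$ through the composite yields $F(E \otimes_E M) = F(M) = e_M(F)$, with the right $E$-action on $F(M)$ coming from contravariance of $F$ along $\End_A(M) = E$.

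Alternatively --- and this is what I would likely write up, since it is self-contained --- one can verify the equivalence directly by producing the two natural isomorphisms. For $Z \in \Mod(E^\mathrm{op})$ one has immediately $e_M f_M(Z) = Z \otimes_E \Hom_A(M,M) = Z \otimes_E E \cong Z$. For $F \in \Mod(\add_A M)$ and $X \in \add_A M$, I would define $\mu_{F,X} \colon F(M) \otimes_E \Hom_A(X,M) \to F(X)$ by $z \otimes \phi \mapsto (F\phi)(z)$; the $E$-balancing condition is precisely the compatibility between the right $E$-action on $F(M)$ and composition in $\End_A(M)$. The map $\mu_{F,M}$ reduces to the canonical isomorphism $F(M) \otimes_E E \cong F(M)$, and since both sides are additive functors in $X$, the isomorphism extends to every direct summand of $M^n$, hence to all of $\add_A M$. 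Naturality in $F$ is straightforward from the definition.

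For the restriction to finitely presented objects, one can either invoke the categorical fact that equivalences of abelian categories preserve finite presentability, or argue concretely: the functor $e_M$ is exact (pointwise evaluation preserves exactness) and sends a representable $\Hom_A(-,X)|_{\add_A M}$ to $\Hom_A(M,X) \in \proj(E^\mathrm{op})$, so any presentation of $F$ by representables is mapped to a presentation of $e_M(F)$ by finitely generated projectives; the converse goes through $f_M$ together with the equivalence $\add_A M \simeq \proj(E^\mathrm{op})$ in \resref{projectivization}. The principal obstacle I foresee is nothing more than bookkeeping around left/right $E$-module conventions: one must keep careful track of the fact that contravariance of $F$ endows $F(M)$ with a \emph{right} $E$-module (equivalently, left $E^\mathrm{op}$-module) structure, and that $\Hom_A(-,M)|_{\add_A M}$ takes values in left $E$-modules, so that the tensor product $Z \otimes_E \Hom_A(X,M)$ appearing in the definition of $f_M$ is well-defined and the proposed $\mu_{F,X}$ descends to the tensor product.
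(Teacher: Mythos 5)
Your proposal is correct and your ``self-contained'' write-up is essentially the paper's own proof: the paper constructs exactly your natural transformation $\mu_{F,X}$ (denoted $\delta_X$ there), observes that $\delta_M$ reduces to the canonical isomorphism $FM \otimes_E E \cong FM$, and invokes additivity of both sides to extend to all of $\add_A M$, with the counit $Z \cong Z \otimes_E E = e_M f_M(Z)$ handled exactly as you have it. Your preliminary ``plan'' of chaining the projectivization $\add_A M \simeq \proj(E^{\mathrm{op}})$ with the evaluation equivalence $\Mod(\proj(E^{\mathrm{op}})) \simeq \Mod(E^{\mathrm{op}})$ is a valid and slightly more conceptual alternative route, but you correctly identify that the direct verification is just as short; the paper leaves the finitely presented restriction as a one-line ``straightforward'' remark, whereas your observation that $e_M$ carries representables to finitely generated projectives and is exact fills in that gap explicitly.
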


\begin{proof}
  For $Z$ in $\Mod(E^\mathrm{op})$ the canonical isomorphism
  \begin{displaymath}
    \smash{Z \stackrel{\cong}{\longrightarrow} Z \otimes_E E = 
    Z \otimes_E\Hom_A(M,M) = e_Mf_M(Z)}
  \end{displaymath}
  is natural in $Z$. Thus, the functors $\mathrm{id}_{\Mod(E^\mathrm{op})}$ and
  $e_Mf_M$ are naturally isomorphic. For $F$ in $\Mod(\add_AM)$ there
  is a natural transformation,
  \begin{equation}
    \label{eq:fe}
    \smash{f_Me_M(F)=FM \otimes_E\Hom_A(-,M)|_{\add_AM} \stackrel{\delta}{\longrightarrow} F}\;;
  \end{equation}
  for $X$ in $\add_AM$ the homomorphism \mbox{$\delta_X \colon FM
    \otimes_E\Hom_A(X,M) \to FX$} is given by \mbox{$z \otimes \psi
    \mapsto (F\psi)(z)$}.  Note that $\delta_M$ is an isomorphism as
  it may be identified with the canonical isomorphism \smash{$FM
    \otimes_E {}_EE \stackrel{\cong}{\longrightarrow} FM$} in $\Ab$.
  As the functors in \eqref{fe} are additive, it follows that
  $\delta_X$ is an isomorphism for every \mbox{$X \in \add_AM$}, that
  is, $\delta$ is a natural isomorphism. Since \eqref{fe} is natural
  in $F$, the functors $f_Me_M$ and $\mathrm{id}_{\Mod(\add_AM)}$ are
  naturally isomorphic.

  It is straightforward to verify that the functors $e_M$ and $f_M$
  map finitely presented objects to finitely presented objects.
\end{proof}

\begin{obs}
  \label{obs:M-is-A}
  In the case $M=A$ one has \mbox{$E=\End_A(M)=A^\mathrm{op}$}, and
  therefore \prpref{e-f} yields an equivalence \mbox{$f_A \colon
    \mod\,A \to \mod(\proj\,A)$} given by
  \begin{displaymath}
    X \longmapsto X \otimes_{A^\mathrm{op}}\Hom_A(-,A)|_{\proj\,A}\;.
  \end{displaymath}
  It is easily seen that the functor $f_A$ is naturally isomorphic to
  the functor given by 
  \begin{displaymath}
    X \longmapsto \Hom_A(-,X)|_{\proj\,A}\;.
  \end{displaymath}
  We will usually identify $f_A$ with this functor.
\end{obs}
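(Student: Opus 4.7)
The plan is to exhibit, for each $X$ in $\mod\,A$, a natural isomorphism (in $\mod(\proj\,A)$) between $f_A(X) = X \otimes_{A^\mathrm{op}} \Hom_A(-,A)|_{\proj\,A}$ and $\Hom_A(-,X)|_{\proj\,A}$, and then to note that this assignment is itself natural in $X$. For $X \in \mod\,A$ and $P \in \proj\,A$, the natural choice is to set
\begin{displaymath}
  \eta_{X,P} \colon X \otimes_{A^\mathrm{op}} \Hom_A(P,A) \longrightarrow \Hom_A(P,X), \qquad x \otimes \phi \longmapsto \bigl(\, p \mapsto \phi(p) \cdot x \,\bigr),
\end{displaymath}
where the dot denotes the given left $A$-action on $X$. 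The first step is to check that $\eta_{X,P}$ is well defined (i.e.\ $A^\mathrm{op}$-balanced in the tensor variables) and $A$-linear in $p$, and that $(X,P) \mapsto \eta_{X,P}$ is natural in both variables.

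The second step is the standard reduction to free modules. For $P = A$, the map $\eta_{X,A}$ is identified with the canonical isomorphism $X \otimes_{A^\mathrm{op}} A \cong X \cong \Hom_A(A,X)$ given by evaluation at $1_A$, so it is an isomorphism. Since both sides are additive functors of the variable $P$, this extends at once to $P = A^n$ for each $n$, and then to every direct summand of $A^n$. As every $P \in \proj\,A$ is such a summand, $\eta_{X,P}$ is an isomorphism for every $P \in \proj\,A$; naturality in $X$ is immediate from the formula. Together these yield the claimed natural isomorphism of functors $\mod\,A \to \mod(\proj\,A)$.

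I do not foresee a genuine obstacle. The only delicate point is keeping track of the sidedness conventions: $X$ is a right $A^\mathrm{op}$-module via its left $A$-structure, while $\Hom_A(P,A)$ acquires a left $A^\mathrm{op}$-structure from the right $A$-action on $A$. Once these are pinned down the verification is just the familiar tensor-evaluation isomorphism on finitely generated projectives.
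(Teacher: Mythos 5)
Your proof is correct, and since the paper gives no argument (it simply says ``easily seen''), your direct verification is the natural one to supply: the map $x\otimes\phi\mapsto(p\mapsto\phi(p)x)$ is the standard tensor--hom evaluation, it is an isomorphism for $P=A$, and additivity extends it to all of $\proj\,A$. One could also deduce the claim more abstractly from \prpref{e-f} itself, by noting $e_A\bigl(\Hom_A(-,X)|_{\proj\,A}\bigr)\cong X$ naturally and applying $f_A e_A\cong\mathrm{id}$, but your explicit computation is just as clean and has the advantage of making the sidedness bookkeeping transparent.
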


\begin{dfn}
  \label{dfn:2Y}
  The functor \mbox{$y_M \colon \add_AM \to \mod(\add_AM)$} which for
  $X \in \add_AM$ is given by $y_M(X) = \Hom_A(-,X)|_{\add_AM}$ is
  called the \emph{Yoneda functor}.
\end{dfn}

Let $\cA$ be a full additive subcategory of an abelian category $\cM$.
If $\cA$ is closed under extensions in $\cM$, then $\cA$ has a natural induced
exact structure. However, one can always equip $\cA$ with the
\emph{trivial exact structure}. In this structure, the ``exact
sequences'' (somtimes called \emph{conflations}) are only the split
exact ones.  When viewing $\cA$ as an exact category with the trivial
exact structure, we denote it $\cA_0$.

\begin{lem}
  \label{lem:Yoneda}
  Assume that $A$ is commutative and noetherian and let \mbox{$M \in
    \mod\,A$}. Set \mbox{$E=\End_A(M)$} and assume that
  $E^{\mathrm{op}}$ has finite global dimension. For the exact Yoneda
  functor \mbox{$y_M \colon (\add_AM)_0 \to \mod(\add_AM)$}, see
  \dfnref[]{2Y}, the homomorphisms $\K_n(y_M)$, where $n \geqslant 0$,
  and $\K_1^{\mathrm{B}}(y_M)$ are isomorphisms.
\end{lem}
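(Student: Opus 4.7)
The plan is to compose $y_M$ with the evaluation equivalence $e_M$ from \prpref{e-f}, recognize the resulting composite as the inclusion $\proj(E^{\mathrm{op}}) \hookrightarrow \mod(E^{\mathrm{op}})$ precomposed with the projectivization equivalence, and then invoke the resolution theorems of Quillen and Bass.

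Concretely, for $X \in \add_A M$ one computes directly from the definitions
\begin{displaymath}
  (e_M \circ y_M)(X) = y_M(X)(M) = \Hom_A(M,X),
\end{displaymath}
with its natural right $E$-module, i.e.~left $E^{\mathrm{op}}$-module, structure. By the projectivization equivalence \resref{projectivization}, the functor $\Hom_A(M,-)$ induces an equivalence of additive categories $\add_A M \simeq \proj(E^{\mathrm{op}})$. Thus $e_M \circ y_M$ factors as
\begin{displaymath}
  (\add_A M)_0 \xrightarrow{\ \simeq\ } (\proj(E^{\mathrm{op}}))_0 \xhookrightarrow{\ \iota\ } \mod(E^{\mathrm{op}}),
\end{displaymath}
where the first arrow is an equivalence of exact categories once one observes that the trivial exact structure on $\proj(E^{\mathrm{op}})$ coincides with the structure inherited from $\mod(E^{\mathrm{op}})$ (any short exact sequence of projectives splits), and $\iota$ is the natural inclusion.

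Next I would verify that $\iota$ satisfies the hypotheses of the resolution theorems. Since $A$ is commutative noetherian and $M$ is finitely generated, $E = \End_A(M)$ is a finitely generated $A$-module and therefore a left- and right-noetherian ring; hence $E^{\mathrm{op}}$ is noetherian too, and finitely presented $E^{\mathrm{op}}$-modules coincide with finitely generated ones. The assumption that $\mathrm{gl.dim}(E^{\mathrm{op}})$ is finite then guarantees that every object of $\mod(E^{\mathrm{op}})$ admits a finite resolution by finitely generated projectives. Consequently Quillen's resolution theorem \cite[\S4 cor.~1]{MR0338129} shows that $\K_n(\iota)$ is an isomorphism for every $n\geqslant 0$, and Bass' analogue for the universal determinant group \cite[VIII\S4]{MR0249491} shows that $\K_1^{\mathrm{B}}(\iota)$ is an isomorphism.

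Finally, equivalences of exact categories induce isomorphisms on both $\K_n$ and $\K_1^{\mathrm{B}}$, so the equivalences $e_M$ and $\Hom_A(M,-)$ contribute isomorphisms at either end of the factorization; composing the three maps gives the conclusion. The only point demanding any real care is the bookkeeping with exact structures in the factorization above: one must check that the split conflations in $(\add_A M)_0$ correspond exactly to the (automatically split) short exact sequences of projectives in $\proj(E^{\mathrm{op}})$, so that the resolution theorems truly bridge $(\add_A M)_0$ and $\mod(E^{\mathrm{op}})$. Once that compatibility is in place, the rest of the argument is formal.
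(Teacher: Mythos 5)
Your proposal is correct and matches the paper's proof: both read off the same commutative square, identifying $e_M\circ y_M$ with the inclusion $\proj(E^{\mathrm{op}})\hookrightarrow\mod(E^{\mathrm{op}})$ via projectivization, and then apply Quillen's and Bass' resolution theorems. Your extra remarks about noetherianity of $E^{\mathrm{op}}$ and the matching of trivial versus inherited exact structures are sound bookkeeping that the paper leaves implicit.
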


\begin{proof}
  By application of $\K_n$ to the commutative diagram,
  \begin{displaymath}
    \xymatrix@C=12ex{ (\add_AM)_0 \ar[d]_-{y_M}
      \ar[r]^-{\Hom_A(M,-)}_-{\simeq} &
      \proj(E^{\mathrm{op}}) \ar[d]^-{\mathrm{inc}} \\
      \mod(\add_AM) \ar[r]_-{e_M}^-{\simeq} & \mod(E^{\mathrm{op}})\;,
    }
  \end{displaymath}
  it follows that $\K_n(y_M)$ is an isomorphism if and only if
  $\K_n(\mathrm{inc})$ is an isomorphism. The latter holds by
  Quillen's resolution theorem \cite[\S4 thm.~3]{MR0338129}, since
  $E^{\mathrm{op}}$ has finite global dimension.
  A similar argument shows that $\K_1^{\mathrm{B}}(y_M)$ is an
  isomorphism. This time one needs to apply Bass' resolution theorem; see
  \cite[VIII\S4 thm.~(4.6)]{MR0249491}.
\end{proof}

Since $\K_0$ may be identified with the Grothendieck group functor,
cf.~\resref{Quillen}, the following result is well-known. In any case,
it is straightforward to verify.

\begin{lem}
  \label{lem:intuitive}
  Assume that $\mod\,A$ is Krull--Schmidt. Let \mbox{$N = N_1^{n_1}
    \oplus \cdots \oplus N_s^{n_s}$} be a finitely generated
  $A$-module, where $N_1,\ldots,N_s$ are non-isomorphic indecomposable
  $A$-modules  and \mbox{$n_1,\ldots,n_s>0$}. The homomorphism of
  abelian groups,
  \begin{displaymath}
    \psi_N \colon
    \mathbb{Z}N_1 \oplus \cdots \oplus \mathbb{Z}N_s 
    \longrightarrow \K_0((\add_AN)_0)\;,
  \end{displaymath}
  given by $N_j \mapsto [N_j]$, is an isomorphism. \qed
\end{lem}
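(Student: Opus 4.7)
The plan is to construct an explicit inverse to $\psi_N$ using the Krull--Schmidt property. First recall that in the trivial exact structure on $(\add_A N)_0$, every short exact sequence is split, so $\K_0((\add_A N)_0)$ is by definition the free abelian group on isomorphism classes $[X]$ of objects $X \in \add_A N$ modulo the relations $[X \oplus Y] = [X] + [Y]$.

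Since $\mod\,A$ is Krull--Schmidt, every $X \in \add_A N$ admits a decomposition
\begin{displaymath}
  X \;\cong\; N_1^{a_1(X)} \oplus \cdots \oplus N_s^{a_s(X)}
\end{displaymath}
with natural numbers $a_j(X) \in \mathbb{N}_0$ that are uniquely determined by $X$ (up to isomorphism). Using this, I would define a map
\begin{displaymath}
  \varphi \colon \K_0((\add_A N)_0) \longrightarrow
    \mathbb{Z}N_1 \oplus \cdots \oplus \mathbb{Z}N_s
  \qquad \text{by} \qquad
  [X] \longmapsto \sum_{j=1}^s a_j(X)\,N_j\;.
\end{displaymath}
Uniqueness of the Krull--Schmidt decomposition ensures that $\varphi$ is well-defined on isomorphism classes, and additivity on direct sums ($a_j(X \oplus Y) = a_j(X) + a_j(Y)$) guarantees that $\varphi$ respects the defining relations of $\K_0((\add_A N)_0)$.

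Finally, I would verify that $\varphi$ and $\psi_N$ are mutually inverse. The composite $\varphi \circ \psi_N$ sends $N_j$ to $N_j$ and so is the identity on generators. Conversely, for any $X \cong N_1^{a_1} \oplus \cdots \oplus N_s^{a_s}$ one has $[X] = \sum_j a_j[N_j]$ in $\K_0((\add_A N)_0)$ (using only split additivity), hence $\psi_N \circ \varphi = \mathrm{id}$. No real obstacle arises; the only substantive input is the Krull--Schmidt uniqueness, which is available by the standing assumption.
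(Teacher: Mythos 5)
Your proof is correct and is exactly the "straightforward verification" the paper alludes to before declaring the lemma \qed without details: identify $\K_0$ of the split exact category with the Grothendieck group, use Krull--Schmidt uniqueness of decomposition to define the inverse on generators, and check the two composites are the identity. No meaningful divergence from the paper's intended route.
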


\section{The Abelian Category $\mathcal{Y}$}
\label{sec:Y}

By the assumptions in \stpref{assumptions}, the ground ring $R$ has a
dualizing module. It follows from Auslander and Buchweitz
\cite[thm.~A]{MAsROB89} that $\MCM\,R$ is precovering in $\mod\,R$.
Actually, in our case $\MCM\,R$ equals $\add_RM$ for some finitely
generated $R$-module $M$ (a representation generator), and it is
easily seen that every category of this form is precovering in
$\mod\,R$. By \obsref{precover} we have a coherent pair
$(\MCM\,R,\proj\,R)$, which by \resref{localization} yields a Gabriel
localization sequence,
\begin{equation}
  \label{eq:MCM-proj-ir}
  \mathcal{Y}=\Ker r \stackrel{i}{\longrightarrow} 
  \mod(\MCM\,R) \stackrel{r}{\longrightarrow} \mod(\proj\,R)\;.
\end{equation}
Here $r$ is the restriction functor, $\mathcal{Y}=\Ker r$, and $i$ is
the inclusion. Since an additive functor vanishes on $\proj\,R$ if and
only if it vanishes on $R$, one has
\begin{displaymath}
  \mathcal{Y} = \{ F \in \mod(\MCM\,R) \,|\, F(R)=0 \}\;.  
\end{displaymath}
The following two results about the abelian category $\mathcal{Y}$ are
due to Yoshino. The first result is \cite[(13.7.4)]{yos}; the
second is (proofs of) \cite[lem.~(4.12) and prop.~(4.13)]{yos}.

\begin{thm}
  \label{thm:Yos1}
  Every object in $\mathcal{Y}$ has finite length, i.e.~$\mathcal{Y}$ is a length category.  \qed
\end{thm}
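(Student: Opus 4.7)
The plan is to bound the lengths of the evaluations $F(M_j)$ and then convert this into finite length of $F$ via a dévissage over the finite set of indecomposables of $\MCM\,R$.

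\emph{Step 1: each $F(M_j)$ has finite length.} I would pick a finite presentation $\Hom_R(-, X) \to \Hom_R(-, Y) \to F \to 0$ arising from a morphism $f \colon X \to Y$ in $\MCM\,R$. Evaluating at $R$, the condition $F(R) = 0$ amounts to surjectivity of $f$, so with $K = \Ker f$ we obtain a short exact sequence $0 \to K \to X \to Y \to 0$ of finitely generated $R$-modules. Applying $\Hom_R(M_j, -)$ and reading the long exact Ext-sequence identifies $F(M_j)$ with a submodule of $\operatorname{Ext}^1_R(M_j, K)$. The crucial use of the finite-CM-type hypothesis is that \cite[thm.~(4.22)]{yos} forces $R$ to be an isolated singularity, so $(M_j)_\mathfrak{p}$ is free over $R_\mathfrak{p}$ for every non-maximal prime $\mathfrak{p}$ and every $j \geqslant 1$. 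Hence $\operatorname{Ext}^1_R(M_j, K)_\mathfrak{p} = 0$ for $\mathfrak{p} \neq \m$, so this Ext-group is supported only at $\m$; together with finite generation over $R$ this forces finite length, and $F(M_j)$ inherits it.

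\emph{Step 2: $F$ itself has finite length.} By \resref{localization} the inclusion $\mod(\MCM\,R) \hookrightarrow \Mod(\MCM\,R)$ is exact, so evaluation at any object of $\MCM\,R$ is an exact functor on $\mod(\MCM\,R)$. Since $\MCM\,R$ is Krull--Schmidt with the finitely many indecomposables $R = M_0, M_1, \ldots, M_t$ and additive functors preserve direct sums, a subfunctor $G \subseteq F$ is determined by the tuple of subgroups $(G(M_0), G(M_1), \ldots, G(M_t))$. Since $G(M_0) \subseteq F(R) = 0$, and each $F(M_j)$ for $j \geqslant 1$ has finite length by Step 1, any ascending or descending chain of subfunctors of $F$ stabilises in each coordinate and hence globally. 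Therefore $F$ is both noetherian and artinian, and so of finite length.

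The main obstacle is the isolated-singularity step in Step 1, which is the only place where the finite-CM-type assumption is used essentially; the concluding dévissage is routine.
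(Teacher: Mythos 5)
The paper gives no proof of this statement; it cites Yoshino's book, \rescite[(13.7.4)]{yos}, directly. Your argument is correct and is, in substance, the standard one: the isolated-singularity property (a consequence of finite CM-type via \thmcite[(4.22)]{yos}) is precisely what makes the relevant $\operatorname{Ext}^1$-groups supported at $\m$ and hence of finite $R$-length, and the Krull--Schmidt structure of $\MCM\,R$ with finitely many indecomposables reduces finite length of $F$ to finite length of the finitely many evaluations $F(M_j)$.

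One point in Step 2 deserves tightening. You conclude from finite length of $F(M_j)$ that chains of ``subgroups'' $G_i(M_j) \subseteq F(M_j)$ stabilise, but the length computed in Step 1 is $R$-length, and finite $R$-length does not bound chains of arbitrary abelian subgroups. What saves the argument is that each $G_i(M_j)$ is in fact an $R$-submodule of $F(M_j)$: the $R$-action on $F(M_j)$ comes from the endomorphisms $r\cdot 1_{M_j}$ of $M_j$ in $\MCM\,R$, and naturality of the inclusion $G_i \hookrightarrow F$ over this $R$-linear category forces $G_i(M_j)$ to be stable under that action. With this observation made explicit, the noetherian/artinian d\'evissage over $M_0,\ldots,M_t$ goes through exactly as you describe, and the proof is complete.
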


\begin{thm}
  \label{thm:Yos2}
  Consider for \mbox{$1 \leqslant j \leqslant t$} the
  Auslander--Reiten sequence \eqref{AR} ending in $M_j$. The functor
  $F_j$, defined by the following exact sequence in $\mod(\MCM\,R)$,
  \begin{displaymath}
    0 \longrightarrow \Hom_R(-,\tau(M_j))
    \longrightarrow \Hom_R(-,X_j)
    \longrightarrow \Hom_R(-,M_j)
    \longrightarrow F_j
    \longrightarrow 0\;,
  \end{displaymath}
  is a simple object in $\mathcal{Y}$. Conversely, every simple
  functor in $\mathcal{Y}$ is naturally isomorphic to $F_j$ for some
  $1 \leqslant j \leqslant t$.  \qed
\end{thm}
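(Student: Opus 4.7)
The plan is to exploit the defining property of the right almost split morphism $\rho_j\colon X_j\to M_j$: a morphism $g\colon N\to M_j$ in $\MCM\,R$ factors through $\rho_j$ if and only if $g$ is not a split epimorphism. All other ingredients (additivity of functors on $\MCM\,R=\add_R M$, simplicity of $k_j=E_j/\J(E_j)$ as a module over the local ring $E_j=\End_R(M_j)$, and the Yoneda lemma) are routine consequences of the Krull--Schmidt hypothesis.

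For the first assertion, I would evaluate $F_j$ on each indecomposable $M_i$: from the defining sequence, $F_j(M_i)=\Hom_R(M_i,M_j)/\Im\rho_j^*$. When $i\neq j$, every morphism between the non-isomorphic indecomposables $M_i$ and $M_j$ (using $M_0=R\not\cong M_j$ since $M_j$ is non-free) fails to be a split epimorphism and hence factors through $\rho_j$; thus $F_j(M_i)=0$. When $i=j$, the image of $\rho_j^*\colon\Hom_R(M_j,X_j)\to E_j$ is precisely the set of non-units $\J(E_j)$, so $F_j(M_j)=k_j$. In particular $F_j(R)=0$, confirming $F_j\in\mathcal{Y}$. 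Any nonzero subfunctor $G\subseteq F_j$ must have $G(M_j)\neq 0$; but $G(M_j)$ is an $E_j$-submodule of the simple module $k_j$, so $G(M_j)=k_j$, and additivity of $G$ and $F_j$ on $\add_R M$ forces $G=F_j$.

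For the converse, let $F\in\mathcal{Y}$ be simple. Since $F\neq 0$ and $F(R)=0$, additivity produces some $1\leqslant j\leqslant t$ and $z\in F(M_j)\setminus\{0\}$. By Yoneda, $z$ yields a natural transformation $\pi_z\colon\Hom_R(-,M_j)\to F$ with $\pi_z(1_{M_j})=z$; its image is a nonzero subfunctor of the simple $F$, so $\pi_z$ is surjective. The crucial step, which I expect to be the main obstacle, is to establish $K:=\Ker\pi_z\subseteq\Im\rho_j^*$. Were some $g\in K(N)$ a split epimorphism with section $s\colon M_j\to N$, then naturality of $K$ in $s$ would place $g\circ s=1_{M_j}$ in $K(M_j)$, contradicting $\pi_z(1_{M_j})=z\neq 0$. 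Hence no $g\in K(N)$ is a split epimorphism, and the almost split property of $\rho_j$ forces each such $g$ to factor through $\rho_j$, which is the desired inclusion.

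This inclusion produces a natural surjection $F=\Hom_R(-,M_j)/K\twoheadrightarrow\Hom_R(-,M_j)/\Im\rho_j^*=F_j$. Since both $F$ and $F_j$ are simple (by hypothesis and by the first part) and the map is nonzero — as $F_j(M_j)=k_j\neq 0$ — it is an isomorphism, completing the proof.
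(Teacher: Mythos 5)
Your argument is correct and is essentially the same as the one the paper relies on: the paper gives no proof of its own but cites Yoshino's lem.~(4.12) and prop.~(4.13), whose proofs use exactly this standard mechanism — evaluating $F_j$ on indecomposables via the right almost split property of $X_j \to M_j$ to get $F_j(M_i)=0$ for $i\neq j$ and $F_j(M_j)=E_j/\J(E_j)$, and, conversely, lifting a nonzero element of a simple $F$ through Yoneda and showing the kernel consists of non--split epimorphisms, hence factors through $X_j \to M_j$. No gaps beyond routine points (subfunctors of additive functors are additive, and simplicity in $\mathcal{Y}$ agrees with simplicity in $\mod(\MCM\,R)$ since $\mathcal{Y}$ is closed under subobjects), which you implicitly and correctly use.
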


\begin{prp}
  \label{prp:K0i-Y}
  Let \mbox{$i \colon \mathcal{Y} \to \mod(\MCM\,R)$} be the inclusion
  functor from \eqref{MCM-proj-ir} and let $\Upsilon \colon
  \mathbb{Z}^t \to \mathbb{Z}^{t+1}$ be the Auslander--Reiten
  homomorphism; see \dfnref{AR-matrix}. The homomorphisms $\K_0(i)$ and
  $\Upsilon$ are isomorphic.
\end{prp}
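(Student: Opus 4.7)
\begin{prf}
The plan is to identify $\K_0(\mathcal{Y})$ with $\mathbb{Z}^t$ and $\K_0(\mod(\MCM\,R))$ with $\mathbb{Z}^{t+1}$ via natural bases of generators, and then to verify that $\K_0(i)$ becomes exactly $\Upsilon$ under these identifications.

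For the source, I would invoke Quillen's devissage theorem: by \thmref{Yos1} the category $\mathcal{Y}$ is a length category, and by \thmref{Yos2} its simple objects are, up to isomorphism, precisely the $t$ functors $F_1,\ldots,F_t$. Devissage then yields an isomorphism $\mathbb{Z}^t \cong \K_0(\mathcal{Y})$ sending the $j$th basis vector to $[F_j]$. For the target, I would choose the representation generator $M = M_0 \oplus M_1 \oplus \cdots \oplus M_t$ so that $\MCM\,R = \add_RM$. Since $R$ is an isolated singularity of finite CM-type (\stpref{assumptions}), Auslander's theorem for the endomorphism ring of a representation generator ensures that $\End_R(M)^{\mathrm{op}}$ has finite global dimension, so \lemref{Yoneda} provides an isomorphism $\K_0((\MCM\,R)_0) \cong \K_0(\mod(\MCM\,R))$ induced by $y_M$. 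Composed with the isomorphism from \lemref{intuitive} applied to $N=M$, this identifies $\K_0(\mod(\MCM\,R))$ with $\mathbb{Z}^{t+1}$ on the basis $\{[y_M(M_i)]\}_{i=0}^{t}$.

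Next I would evaluate $\K_0(i)[F_j]$ by applying the Euler characteristic to the four-term exact sequence in $\mod(\MCM\,R)$ from \thmref{Yos2} that defines $F_j$; this gives
\begin{displaymath}
  [F_j] \;=\; [y_M(\tau(M_j))] - [y_M(X_j)] + [y_M(M_j)]
\end{displaymath}
in $\K_0(\mod(\MCM\,R))$. Expanding $X_j = \bigoplus_i M_i^{n_{ij}}$ and decomposing $\tau(M_j)$ analogously, the coefficient of $[y_M(M_i)]$ on the right-hand side is precisely the integer $y_{ij}$ from \dfnref{AR-matrix}, i.e.\ the $(i,j)$-entry of $\Upsilon$. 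Hence under the chosen bases $\K_0(i)$ is given by the matrix $\Upsilon$, which is what we had to show.

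The only step requiring genuine care is the appeal to \lemref{Yoneda}, which rests on $\End_R(M)^{\mathrm{op}}$ having finite global dimension; this is Auslander's classical theorem for endomorphism rings of representation generators over isolated singularities of finite CM-type. Everything else is a formal consequence of devissage, the Yoneda/projectivization equivalence, and the additivity of the Euler characteristic on the defining resolution of $F_j$.
\end{prf}
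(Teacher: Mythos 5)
Your proposal is correct and follows essentially the same strategy as the paper: both identify $\K_0(\mathcal{Y})$ with $\mathbb{Z}^t$ via the simple objects $F_1,\ldots,F_t$, identify $\K_0(\mod(\MCM\,R))$ with $\mathbb{Z}^{t+1}$ via $\K_0(y_M)\circ\psi_M$ (using \lemref{Yoneda} and \lemref{intuitive}), and read off $\Upsilon$ from the Euler characteristic of the four-term resolution of $F_j$ in \thmref{Yos2}. One small correction: the finite global dimension of $\End_R(M)^{\mathrm{op}}$ in the Cohen--Macaulay setting is due to Leuschke \cite[thm.~6]{GJL07} rather than Auslander, whose classical theorem concerns Artin algebras of finite representation type.
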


\begin{proof}
  We claim that the following diagram of abelian groups is commutative,
  \begin{displaymath}
    \xymatrix@R=1.5pc{
      \mathbb{Z}M_1 \oplus \cdots \oplus \mathbb{Z}M_t \ar[r]^-{\Upsilon} 
      \ar[dd]_-{\varphi}^-{\cong} & 
      \mathbb{Z}M_0 \oplus \mathbb{Z}M_1 \oplus \cdots \oplus \mathbb{Z}M_t 
      \ar[d]^-{\psi_M}_-{\cong} \\
      {} & \K_0((\MCM\,R)_0) \ar[d]^-{\K_0(y_M)}_-{\cong} \\
      \K_0(\mathcal{Y}) \ar[r]^-{\K_0(i)} & \K_0(\mod(\MCM\,R))\;.
    }
  \end{displaymath}
  The homomorphism $\varphi$ is defined by \mbox{$M_j \mapsto [F_j]$}
  where $F_j \in \mathcal{Y}$ is described in~\thmref[]{Yos2}. From
  \thmref[Theorems~]{Yos1} and \thmref[]{Yos2} and the proof of
  Rosenberg \cite[thm.~3.1.8(1)]{MR1282290} \new{(or the proof of \thmref{Gersten-transformation-2})}, it follows that $\varphi$
  is an isomorphism.  The module $M$ is a representation generator for
  $\MCM\,R$, see \stpref[]{assumptions}, and $\psi_M$ is the
  isomorphism given in \lemref{intuitive}.  Finally, $y_M$ is the
  Yoneda functor from \dfnref{2Y}.  By Leuschke \cite[thm.~6]{GJL07}
  the ring $E^\mathrm{op}$, where $E=\End_R(M)$, has finite global
  dimension, and thus \lemref{Yoneda} implies that $\K_0(y_M)$ is an
  isomorphism.

  From the definitions of the relevant homomorphims, it is straightforward to see that the diagram is commutative; indeed, both $\K_0(i)\varphi$ and $\K_0(y_M)\psi_M\Upsilon$ map a generator $M_j$ to the element $[F_j] \in \K_0(\mod(\MCM\,R))$.
\end{proof}

\section{Proof of the Main Theorem}
\label{sec:proof}

Throughout this section, we fix the notation in \stpref{assumptions}. Thus, $R$ is a commutative noetherian local Cohen--Macaulay ring satisfying conditions \stpref[]{assumptions}(1)--(3), $M$ is any representation generator of $\MCM\,R$, and $E$ is its endomorphism ring.

We shall frequently make use of the Gabriel localization sequence \eqref{MCM-proj-ir}, and $i$ and $r$ always denote the inclusion and the restriction functor in this sequence.

\begin{rmk}
  \label{rmk:C0-C}
  Let $\cC$ be an exact category.  As in the paragraph preceding
  \lemref[Lem\-ma~]{Yoneda}, we denote by $\cC_0$ the category $\cC$
  equipped with the trivial exact structure. Note that the identity
  functor \mbox{$\mathrm{id}_\cC \colon \cC_0 \to \cC$} is exact and
  the induced homomorphism \mbox{$\K_1^{\mathrm{B}}(\mathrm{id}_\cC)
    \colon \K_1^{\mathrm{B}}(\cC_0) \to \K_1^{\mathrm{B}}(\cC)$} is
  surjective, indeed, one has
  $\K_1^{\mathrm{B}}(\mathrm{id}_\cC)\big([C,\alpha]\big) =
  [C,\alpha]$.
\end{rmk}

\begin{lem}
  \label{lem:K1Br-surjective}
  Consider the restriction functor \mbox{$r \colon \mod(\MCM\,R) \to
    \mod(\proj\,R)$} and identity functor \mbox{$\mathrm{id}_{\MCM\,R}
    \colon (\MCM\,R)_0 \to \MCM\,R$}. The homomorphisms
  $\K_1^{\mathrm{B}}(r)$ and
  $\K_1^{\mathrm{B}}(\mathrm{id}_{\MCM\,R})$ are isomorphic, in
  particular, $\K_1^{\mathrm{B}}(r)$ is surjective by \rmkref{C0-C}.
\end{lem}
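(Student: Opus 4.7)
The plan is to fit $\K_1^{\mathrm{B}}(r)$ and $\K_1^{\mathrm{B}}(\mathrm{id}_{\MCM\,R})$ into a commutative square of abelian groups whose vertical arrows are isomorphisms; the surjectivity claim will then follow from \rmkref{C0-C}.

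I would start from the commutative square of exact functors
\begin{displaymath}
\xymatrix{
(\MCM\,R)_0 \ar[r]^-{\mathrm{id}_{\MCM\,R}} \ar[d]_-{y_M} & \MCM\,R \ar[d]^-{f_R \circ \mathrm{inc}} \\
\mod(\MCM\,R) \ar[r]^-{r} & \mod(\proj\,R)\,,
}
\end{displaymath}
where $\mathrm{inc}\colon \MCM\,R \to \mod\,R$ is the inclusion of exact categories and $f_R$ is the equivalence of \obsref{M-is-A}. Commutativity holds on the nose because both composites send $X \in \MCM\,R$ to $\Hom_R(-,X)|_{\proj\,R}$, acting on morphisms by $\Hom_R(-,-)$. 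Applying $\K_1^{\mathrm{B}}$ produces a commutative square of abelian groups whose horizontal edges are precisely $\K_1^{\mathrm{B}}(\mathrm{id}_{\MCM\,R})$ and $\K_1^{\mathrm{B}}(r)$.

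I would then verify that the two vertical arrows become isomorphisms after applying $\K_1^{\mathrm{B}}$. The left vertical, $\K_1^{\mathrm{B}}(y_M)$, is an isomorphism by \lemref{Yoneda}, whose hypothesis is provided by Leuschke's theorem that $E^{\mathrm{op}}$ has finite global dimension (as already exploited in \prpref{K0i-Y}). The right vertical factors as $\K_1^{\mathrm{B}}(f_R) \circ \K_1^{\mathrm{B}}(\mathrm{inc})$, and the first factor is an isomorphism because $f_R$ is an equivalence of abelian categories.

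The step that requires genuine input, and the one I expect to be the principal obstacle, is showing that $\K_1^{\mathrm{B}}(\mathrm{inc})\colon \K_1^{\mathrm{B}}(\MCM\,R) \to \K_1^{\mathrm{B}}(\mod\,R)$ is an isomorphism. For this I would appeal to Bass' resolution theorem \cite[VIII\S4 thm.~(4.6)]{MR0249491}. Two conditions must be checked: that $\MCM\,R$ is a full exact subcategory of $\mod\,R$ closed under kernels of admissible epimorphisms (a direct application of the depth lemma, using that $R$ is Cohen--Macaulay), and that every finitely generated $R$-module admits a finite resolution by objects of $\MCM\,R$. The latter follows from the Auslander--Buchweitz theorem \cite[thm.~A]{MAsROB89}: each $X \in \mod\,R$ fits into a short exact sequence $0 \to Y \to N \to X \to 0$ with $N \in \MCM\,R$ and $\operatorname{pd}_R Y < \infty$, and splicing a finite projective resolution of $Y$ into this sequence yields the desired finite $\MCM\,R$-resolution of $X$ (using $\proj\,R \subseteq \MCM\,R$). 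With both vertical arrows then isomorphisms, $\K_1^{\mathrm{B}}(r)$ and $\K_1^{\mathrm{B}}(\mathrm{id}_{\MCM\,R})$ are isomorphic as homomorphisms of abelian groups, and the surjectivity of the former follows from that of the latter, which is exactly the content of \rmkref{C0-C}.
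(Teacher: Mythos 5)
Your overall architecture is the same as the paper's: you build the same commutative square of exact functors, observe that $\K_1^{\mathrm{B}}(y_M)$ is an isomorphism by \lemref{Yoneda} (using Leuschke's theorem that $E^{\mathrm{op}}$ has finite global dimension), factor the right vertical through the inclusion $j \colon \MCM\,R \to \mod\,R$ followed by the equivalence $f_R$, and reduce to showing $\K_1^{\mathrm{B}}(j)$ is an isomorphism via Bass' resolution theorem.

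There is, however, a genuine error in your verification of the finite-resolution hypothesis. The Auslander--Buchweitz theorem \cite[thm.~A]{MAsROB89}, for a Cohen--Macaulay local ring with a dualizing module, produces a short exact sequence \mbox{$0 \to Y \to N \to X \to 0$} with $N$ maximal Cohen--Macaulay and $Y$ of finite \emph{injective} dimension --- not finite projective dimension. These coincide only when $R$ is Gorenstein, which is not assumed here. Consequently, one cannot splice in a finite projective resolution of $Y$; and splicing in a finite $\MCM$-resolution of $Y$ would be circular. The correct (and simpler) argument is the one the paper cites, namely \cite[prop.~(1.4)]{yos}: over a $d$-dimensional Cohen--Macaulay local ring, the $d$-th syzygy in a projective resolution is maximal Cohen--Macaulay by the depth lemma, so truncating any projective resolution of $X$ after $d$ steps gives a finite resolution of $X$ by objects of $\MCM\,R$ (projective modules being MCM).

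A minor additional point: in Rosenberg's formulation \cite[thm.~3.1.14]{MR1282290} of Bass' resolution theorem, which the paper uses, there are three conditions to check, and the first is that $\MCM\,R$ is precovering (contravariantly finite) in $\mod\,R$. You only verify two of them. That missing condition is exactly where the Auslander--Buchweitz theorem is actually needed in the paper --- precoverability follows from the existence of MCM approximations --- rather than for the finite-resolution condition as you attempted.
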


\begin{proof}
  Consider the commutative diagram of exact categories and exact
  functors,
  \begin{displaymath}
    \xymatrix@R=1.5pc{
      (\MCM\,R)_0 \ar[r]^-{\mathrm{id}_{\MCM\,R}} 
      \ar[dd]_-{y_M} & \MCM\,R 
      \ar[d]^-{j} \\
      {} & \mod\,R \ar[d]_-{\simeq}^-{f_R} \\
      \mod(\MCM\,R) \ar[r]^-{r} & \mod(\proj\,R)\;,
    }
  \end{displaymath}
  where $y_M$ is the Yoneda functor from \dfnref{2Y}, $j$ is the
  inclusion, and $f_R$ is the equivalence from \obsref{M-is-A}. We
  will prove the lemma by arguing that the vertical functors induce
  isomorphisms on the level of $\K_1^{\mathrm{B}}$.

  The ring $E^\mathrm{op}$ has finite global dimension by Leuschke
  \cite[thm.~6]{GJL07}, and hence \lemref{Yoneda} gives that that
  $\K_1^{\mathrm{B}}(y_M)$ is an isomorphism. Since $f_R$ is an
  equivalence, $\K_1^{\mathrm{B}}(f_R)$ is obviously an isomorphism.
  To argue that $\K_1^{\mathrm{B}}(j)$ is an isomorphism, we apply
  Bass' resolution theorem \cite[thm.~3.1.14]{MR1282290}. We must
  check that the subcategory $\MCM\,R$ of $\mod\,R$ satisfies
  conditions (1)--(3) in \emph{loc.\,cit}. Condition (1) follows as
  $\MCM\,R$ is precovering in $\mod\,R$.  As $R$ is Cohen--Macaulay,
  every module in $\mod\,R$ has a resolution of finite length by
  modules in $\MCM\,R$, see \cite[prop.~(1.4)]{yos}; thus condition
  (2) holds. Condition (3) requires that $\MCM\,R$ is closed under
  kernels of epimorphisms; this is well-known from
  e.g.~\cite[prop.~(1.3)]{yos}.
\end{proof}

Next we show some results on the Gersten--Sherman transformation; see~\secref[Sect.~]{K}.

\begin{lem}
  \label{lem:zeta-iso}
  $\zeta_\cC \colon \K_1^{\mathrm{B}}(\cC) \to \K_1(\cC)$ is an
  isomorphism for $\cC=\mod(\MCM\,R)$.
\end{lem}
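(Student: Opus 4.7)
The plan is to transport the claim via an equivalence to the module category of $E^{\mathrm{op}}$ and then use the Quillen/Bass resolution theorems to reduce further to $\proj(E^{\mathrm{op}})$, where $\zeta$ is already known to be an isomorphism by \thmref{Gersten-transformation-1}.

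First, \prpref{e-f} supplies the exact equivalence of abelian categories $e_M \colon \mod(\MCM\,R) \to \mod(E^{\mathrm{op}})$. An equivalence of exact categories automatically induces isomorphisms on both $\K_1^{\mathrm{B}}$ and $\K_1$, so by naturality of the Gersten--Sherman transformation $\zeta$ it suffices to show that $\zeta_{\mod(E^{\mathrm{op}})}$ is an isomorphism.

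Next, Leuschke's theorem \cite[thm.~6]{GJL07}---already invoked in the proof of \lemref{K1Br-surjective}---gives that $E^{\mathrm{op}}$ has finite global dimension. Consequently, the inclusion functor $\mathrm{inc} \colon \proj(E^{\mathrm{op}}) \to \mod(E^{\mathrm{op}})$ satisfies the hypotheses of both Quillen's resolution theorem \cite[\S4 thm.~3]{MR0338129} and Bass's resolution theorem \cite[VIII\S4 thm.~(4.6)]{MR0249491}; in particular, both $\K_1(\mathrm{inc})$ and $\K_1^{\mathrm{B}}(\mathrm{inc})$ are isomorphisms.

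Finally, naturality of $\zeta$ applied to $\mathrm{inc}$ produces a commutative square whose two horizontal arrows are the isomorphisms just obtained, and whose left vertical arrow $\zeta_{\proj(E^{\mathrm{op}})}$ is an isomorphism by \thmref{Gersten-transformation-1}. Hence $\zeta_{\mod(E^{\mathrm{op}})}$, and consequently $\zeta_{\mod(\MCM\,R)}$, is an isomorphism. The argument is a purely formal combination of the tools assembled in \secref{K} and \secref{functors}; no substantive obstacle is expected, and the only point requiring attention is the routine verification of the hypotheses of Bass's resolution theorem, which is automatic under the finite global dimension assumption on $E^{\mathrm{op}}$.
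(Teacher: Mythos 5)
Your argument is correct and coincides with the paper's own proof: both reduce via the equivalence of \prpref{e-f} to $\mod(E^{\mathrm{op}})$, invoke Leuschke's theorem to get finite global dimension of $E^{\mathrm{op}}$, apply the Quillen and Bass resolution theorems to pass to $\proj(E^{\mathrm{op}})$, and finish with \thmref{Gersten-transformation-1}. The only cosmetic difference is that you use $e_M$ while the paper uses its quasi-inverse $f_M$.
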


\begin{proof}
  As $\zeta$ is a natural transformation, there is a commutative
  diagram,
  \begin{displaymath}
    \xymatrix@C10ex{
      \K_1^{\mathrm{B}}(\proj(E^{\mathrm{op}})) 
      \ar[d]_-{\zeta_{\proj(E^{\mathrm{op}})}}
      \ar[r]^-{\K_1^{\mathrm{B}}(\mathrm{inc})} &
      \K_1^{\mathrm{B}}(\mod(E^{\mathrm{op}})) 
      \ar[d]^-{\zeta_{\mod(E^{\mathrm{op}})}}
      \ar[r]^-{\K_1^{\mathrm{B}}(f_M)} & 
      \K_1^{\mathrm{B}}(\mod(\MCM\,R))\,\phantom{,} 
      \ar[d]^-{\zeta_{\mod(\MCM\,R)}} \\
      \K_1(\proj(E^{\mathrm{op}})) 
      \ar[r]^-{\K_1(\mathrm{inc})} &
      \K_1(\mod(E^{\mathrm{op}})) 
      \ar[r]^-{\K_1(f_M)} & 
      \K_1(\mod(\MCM\,R))\,,
    }
  \end{displaymath}
  where \mbox{$f_M \colon \mod(E^{\mathrm{op}}) \to \mod(\MCM\,R)$} is the
  equivalence from \prpref{e-f} and $\mathrm{inc}$ is the inclusion of
  $\proj(E^{\mathrm{op}})$ into $\mod(E^{\mathrm{op}})$.

  From Leuschke \cite[thm.~6]{GJL07}, the noetherian ring
  $E^{\mathrm{op}}$ has finite global dimension.  Hence Bass' and
  Quillen's resolution theorems, \cite[VIII\S4 thm.~(4.6)]{MR0249491}
  (see also Ro\-sen\-berg \cite[thm.~3.1.14]{MR1282290}) and \cite[\S4
  thm.~3]{MR0338129}, imply that $\K_1^{\mathrm{B}}(\mathrm{inc})$ and
  $\K_1(\mathrm{inc})$ are isomorphisms. Since $f_M$ is an
  equivalence, $\K_1^{\mathrm{B}}(f_M)$ and $\K_1(f_M)$ are
  isomorphisms as well. Consequently, $\zeta_{\mod(\MCM\,R)}$ is an
  isomorphism if and only if $\zeta_{\proj(E^{\mathrm{op}})}$ is an
  isomorphism, and the latter holds by
  \thmref{Gersten-transformation-1}.
\end{proof}

The goal is to compute Quillen's $\K$-group $\K_1(\mod\,R)$ for the ring $R$ in question. For our proof of \thmref{1}, it is crucial that this group can be naturally identified with Bass' $\K$-group $\K_1^{\mathrm{B}}(\mod\,R)$. To put \prpref{K1Q-is-K1B} in perspective, we remind the reader that the Gersten--Sherman transformation $\zeta_{\mod\,A}$ is not surjective for the ring $A=\mathbb{Z}C_2$; see~\cite[prop.~5.1]{MR0382398}.

\begin{prp}
  \label{prp:K1Q-is-K1B}
  If the Auslander--Reiten homomorphism from \dfnref{AR-matrix} is injective, then the following assertions hold:
  \begin{prt}
  \item The homomorphism $\zeta_{\mod\,R} \colon \K_1^{\mathrm{B}}(\mod\,R) \to \K_1(\mod\,R)$ is an isomorphism.
  \item There is an exact sequence,
     \begin{displaymath}  
       \xymatrix{\K_1^{\mathrm{B}}(\mathcal{Y}) \ar[r]^-{\K_1^{\mathrm{B}}(i)} &
       \K_1^{\mathrm{B}}(\mod(\MCM\,R)) \ar[r]^-{\K_1^{\mathrm{B}}(r)} &
       \K_1^{\mathrm{B}}(\mod(\proj\,R)) \ar[r] & 0}.
     \end{displaymath}  
  \end{prt}
\end{prp}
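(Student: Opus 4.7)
The plan is to derive both assertions simultaneously by comparing the Quillen localization long exact sequence attached to \eqref{MCM-proj-ir} with its Bass analogue via the Gersten--Sherman transformation $\zeta$.

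First, I would invoke the long exact sequence \eqref{long-exact} and consider the segment
\begin{displaymath}
\K_1(\mathcal{Y}) \longrightarrow \K_1(\mod(\MCM\,R)) \longrightarrow \K_1(\mod(\proj\,R)) \stackrel{\partial}{\longrightarrow} \K_0(\mathcal{Y}) \stackrel{\K_0(i)}{\longrightarrow} \K_0(\mod(\MCM\,R)).
\end{displaymath}
By \prpref{K0i-Y} the map $\K_0(i)$ is isomorphic to the Auslander--Reiten homomorphism $\Upsilon$, which is injective by hypothesis. Hence $\partial = 0$, and the Quillen three-term sequence $\K_1(\mathcal{Y}) \to \K_1(\mod(\MCM\,R)) \to \K_1(\mod(\proj\,R)) \to 0$ is exact.

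Next, I would align this row with the corresponding Bass sequence in a commutative ladder via the natural transformation $\zeta$. Two of the three vertical arrows are already known to be isomorphisms: $\zeta_{\mathcal{Y}}$ by \thmref{Gersten-transformation-2} (since \thmref{Yos1} says $\mathcal{Y}$ is a length category and \thmref{Yos2} says its simple objects are, up to isomorphism, the finite set $F_1,\ldots,F_t$), and $\zeta_{\mod(\MCM\,R)}$ by \lemref{zeta-iso}. The top Bass row is a complex since $r \circ i$ is the zero functor, and it terminates in $0$ on the right by \lemref{K1Br-surjective}. A diagram chase of four-lemma type then shows both that $\zeta_{\mod(\proj\,R)}$ is an isomorphism and that the top row is exact at its middle term, establishing \prtlbl{b}. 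For \prtlbl{a}, the equivalence $f_R \colon \mod\,R \to \mod(\proj\,R)$ from \obsref{M-is-A} together with the naturality of $\zeta$ transfers the isomorphism $\zeta_{\mod(\proj\,R)}$ to $\zeta_{\mod\,R}$.

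I expect the main subtlety to lie in the injectivity half of the diagram chase. To show $\zeta_{\mod(\proj\,R)}$ is injective, given $z$ in its kernel one would lift $z$ through the surjection $\K_1^{\mathrm{B}}(r)$ (available by \lemref{K1Br-surjective}) to some $w$, use commutativity with the isomorphism $\zeta_{\mod(\MCM\,R)}$ to place $\zeta_{\mod(\MCM\,R)}(w)$ into $\Im \K_1(i)$, and finally use exactness of the Quillen row together with the isomorphism $\zeta_{\mathcal{Y}}$ to rewrite $w$ as $\K_1^{\mathrm{B}}(i)$ of an element of $\K_1^{\mathrm{B}}(\mathcal{Y})$; then $z = \K_1^{\mathrm{B}}(r)\K_1^{\mathrm{B}}(i)(\cdot)=0$. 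This is the precise point at which the hypothesis that $\Upsilon$ is injective enters the argument, through its role in producing the terminating $0$ at the right-hand end of the Quillen row.
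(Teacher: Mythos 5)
Your proposal matches the paper's own proof essentially step for step: same localization sequence, same use of Proposition~\resref[]{K0i-Y} to kill the boundary map, same two established isomorphisms $\zeta_{\mathcal{Y}}$ and $\zeta_{\mod(\MCM\,R)}$, same appeal to Lemma~\resref[]{K1Br-surjective} for the terminal zero, and the same Five-Lemma-type diagram chase followed by transfer along the equivalence of Observation~\resref[]{M-is-A}. Your sketch of the injectivity half of the chase is also the correct argument, so nothing is missing.
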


\begin{proof}
  The Gabriel localization sequence \eqref{MCM-proj-ir} induces by
  \resref{localization} a long exact sequence of Quillen
  $\K$-groups,
  \begin{displaymath}
    \xymatrix@C=4.3ex{ 
      \cdots \ar[r] & \K_1(\mathcal{Y}) \ar[r]^-{\K_1(i)} &
      \K_1(\mod(\MCM\,R)) \ar[r]^-{\K_1(r)} & 
      \K_1(\mod(\proj\,R)) \ar[r] & \K_0(\mathcal{Y}) 
      \ar[r]^-{\K_0(i)} & \cdots
    }
  \end{displaymath}
  By \prpref{K0i-Y}, we may identify $\K_0(i)$ with the
  Auslan\-der--Reiten homomorphism, which is assumed to be injective.
  Therefore, the bottom row in the following commutative diagram of
  abelian groups is exact,
  \begin{displaymath}
    \begin{gathered}
    \xymatrix@C=8ex{ 
      \K_1^{\mathrm{B}}(\mathcal{Y}) \ar[r]^-{\K_1^{\mathrm{B}}(i)}
      \ar[d]^-{\zeta_{\mathcal{Y}}}_-{\cong} &
      \K_1^{\mathrm{B}}(\mod(\MCM\,R)) \ar[r]^-{\K_1^{\mathrm{B}}(r)} 
      \ar[d]^-{\zeta_{\mod(\MCM\,R)}}_-{\cong} & 
      \K_1^{\mathrm{B}}(\mod(\proj\,R)) 
      \ar[d]^-{\zeta_{\mod(\proj\,R)}} \ar[r] & 0\phantom{.} \\
      \K_1(\mathcal{Y}) \ar[r]^-{\K_1(i)} &
      \K_1(\mod(\MCM\,R)) \ar[r]^-{\K_1(r)} & 
      \K_1(\mod(\proj\,R)) \ar[r] & 0.
    }
    \end{gathered}
  \end{displaymath}
  The vertical homomorphisms are given by the Gersten--Sherman
  transformation; see \secref{K}.  It follows from
  \thmref[Theorems~]{Yos1} and \thmref[]{Yos2} that $\mathcal{Y}$ is a
  length category with only finitely many simple objects; thus
  $\zeta_{\mathcal{Y}}$ is an isomorphism by
  \thmref{Gersten-transformation-2}. And $\zeta_{\mod(\MCM\,R)}$ is an
  isomorphism by \lemref{zeta-iso}. Since \mbox{$ri=0$}, it follows
  that $\K_1^{\mathrm{B}}(r)\K_1^{\mathrm{B}}(i)=0$ holds, and a diagram
  chase now shows that $\Im \K_1^{\mathrm{B}}(i) = \Ker
  \K_1^{\mathrm{B}}(r)$.  Furthermore $\K_1^{\mathrm{B}}(r)$ is
  surjective by \lemref{K1Br-surjective}. This proves part (b).

  The Five Lemma now implies that
  $\zeta_{\mod(\proj\,R)}$ is an isomorphism. Since the category
  $\mod(\proj\,R)$ is equivalent to $\mod\,R$, see \obsref{M-is-A}, it
  follows that $\zeta_{\mod\,R}$ is an isomorphism as well. This proves (a).
\end{proof}  

We will also need the following classical notion.

\begin{dfn}
  \label{dfn:cover}
  Let $\cM$ be an abelian category, and let $M$ be an object in $\cM$.
  A \emph{projective cover} of $M$ is an epimorphism $\varepsilon
  \colon P \twoheadrightarrow M$ in $\cM$, where $P$ is projective,
  such that every endomorphism $\alpha \colon P \to P$ satisfying
  $\varepsilon \alpha = \varepsilon$ is an automorphism.
\end{dfn}

\begin{lem}
  \label{lem:cover}
  Let there be given a commutative diagram,
  \begin{displaymath}
    \xymatrix{
      P \ar[d]_-{\alpha} \ar@{->>}[r]^-{\varepsilon} & M \ar[d]^-{\varphi} \\
      P \ar@{->>}[r]^-{\varepsilon} & M
    }
  \end{displaymath}
  in an abelian category $\cM$, where \mbox{$\varepsilon \colon P
    \twoheadrightarrow M$} is a projective cover of $M$. If $\varphi$
  is an automorphism, then $\alpha$ is an automorphism.
\end{lem}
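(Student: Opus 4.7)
The plan is to reduce the question to the defining property of a projective cover, by constructing a partial inverse $\beta \colon P \to P$ to $\alpha$ using projectivity of $P$ and the fact that $\varphi$ is invertible.

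Specifically, I would first consider the morphism $\varphi^{-1}\varepsilon \colon P \to M$. Since $P$ is projective and $\varepsilon \colon P \twoheadrightarrow M$ is an epimorphism, there exists a morphism $\beta \colon P \to P$ with $\varepsilon\beta = \varphi^{-1}\varepsilon$. Then I would compute
\begin{displaymath}
  \varepsilon(\alpha\beta) \,=\, (\varepsilon\alpha)\beta \,=\, (\varphi\varepsilon)\beta \,=\, \varphi(\varepsilon\beta) \,=\, \varphi\varphi^{-1}\varepsilon \,=\, \varepsilon,
\end{displaymath}
and symmetrically $\varepsilon(\beta\alpha) = \varphi^{-1}\varepsilon\alpha = \varphi^{-1}\varphi\varepsilon = \varepsilon$. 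By the defining property of a projective cover (\dfnref{cover}), both $\alpha\beta$ and $\beta\alpha$ are automorphisms of $P$.

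From here the conclusion is purely formal: $\alpha\beta$ being an automorphism shows that $\alpha$ has the right inverse $\beta(\alpha\beta)^{-1}$, i.e.\ $\alpha$ is a split epimorphism, while $\beta\alpha$ being an automorphism shows that $\alpha$ has the left inverse $(\beta\alpha)^{-1}\beta$, i.e.\ $\alpha$ is a split monomorphism. A morphism in any category which is both a split monomorphism and a split epimorphism is an isomorphism, so $\alpha$ is an automorphism of $P$.

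I do not anticipate a substantive obstacle; the only point worth care is that one must produce $\beta$ on both sides (equivalently, verify that $\alpha\beta$ and $\beta\alpha$ are both automorphisms) rather than just one, since the projective-cover property gives invertibility only of endomorphisms lying over $\varepsilon$. This is automatic once $\beta$ is chosen as above.
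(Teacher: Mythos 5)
Your proof is correct and follows essentially the same route as the paper: lift $\varphi^{-1}\varepsilon$ through $\varepsilon$ to get $\beta$, check that $\varepsilon\alpha\beta = \varepsilon = \varepsilon\beta\alpha$, invoke the projective-cover property to get $\alpha\beta, \beta\alpha \in \Aut(P)$, and conclude. The only difference is that you spell out the final formal deduction (one-sided inverses from both sides give a two-sided inverse), which the paper leaves implicit.
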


\begin{proof}
  As $P$ is projective and $\varepsilon$ is an epimorphism, there
  exists \mbox{$\beta \colon P \to P$} such that \mbox{$\varepsilon
    \beta = \varphi^{-1} \varepsilon$}. By assumption one has
  \mbox{$\varepsilon \alpha = \varphi \varepsilon$}. Hence
  \mbox{$\varepsilon \alpha \beta = \varphi \varepsilon \beta =
    \varphi \varphi^{-1} \varepsilon = \varepsilon$}, and similarly,
  \mbox{$\varepsilon \beta \alpha = \varepsilon$}. As $\varepsilon$ is
  a projective cover, we conclude that $\alpha \beta$ and $\beta
  \alpha$ are automorphisms of $P$, and thus $\alpha$ must be an
  automorphism.
\end{proof}

The following lemma explains the point of the tilde construction
\resref{tilde}.

%\begin{lem}
%  \label{lem:eta}
%  Consider the isomorphism \smash{\mbox{$\eta_{E^\mathrm{op}} \colon
%    \K_1^{\mathrm{C}}(E^\mathrm{op}) \stackrel{\cong}{\longrightarrow}
%    \K_1^{\mathrm{B}}(\proj(E^\mathrm{op}))$}} given in \resref{free}.
%  For $X \in \MCM\,R$ and $\alpha \in \Aut_R(X)$ consider the
%  element
%  \begin{displaymath}
%    \xi_{X,\alpha} = \big[\Hom_R(M,X),\Hom_R(M,\alpha)\,\big] \in 
%    \K_1^{\mathrm{B}}(\proj(E^\mathrm{op}))\;.
%  \end{displaymath}
%  Let $\tilde{\alpha}$ be the invertible matrix with entries in $E$
%  obtained by applying Construction \resref{tilde} to $\alpha$.  Then
%  $\eta_{E^\mathrm{op}}^{-1}(\xi_{X,\alpha}) \in
%  \K_1^{\mathrm{C}}(E^\mathrm{op})$ is represented by the matrix
%  $\tilde{\alpha}^T$.
%\end{lem}

\begin{lem}
  \label{lem:eta}
  Consider the isomorphism \smash{\mbox{$\eta_{E^\mathrm{op}} \colon
    \K_1^{\mathrm{C}}(E^\mathrm{op}) \to
    \K_1^{\mathrm{B}}(\proj(E^\mathrm{op}))$}} in \resref{free}. Let $X \in \MCM\,R$ and $\alpha \in \Aut_R(X)$ be given, and $\tilde{\alpha}$ be the invertible matrix with entries in $E$ obtained by applying Construction \resref{tilde} to $\alpha$. There is an equality,
  \begin{displaymath}
    \eta_{E^\mathrm{op}}(\tilde{\alpha}^T) = \big[\Hom_R(M,X),\Hom_R(M,\alpha)\,\big]\;.
  \end{displaymath}
\end{lem}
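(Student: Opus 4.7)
The plan is to rewrite the right-hand side $[\Hom_R(M,X), \Hom_R(M,\alpha)]$ using the standard relations in Bass' $\K_1$-group until it becomes the class of a pair on the free $E^\mathrm{op}$-module $E^q$, and then to match this with the explicit description of $\eta_{E^\mathrm{op}}$ recalled in \resref{free}.

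First I would invoke Construction \resref{tilde}, which produces $Y \in \MCM\,R$ and an isomorphism $\psi \colon X \oplus Y \to M^q$ satisfying $\tilde{\alpha} = \psi(\alpha \oplus 1_Y)\psi^{-1}$. Applying the additive functor $\Hom_R(M,-)$ yields an isomorphism $\Phi = \Hom_R(M,\psi)$ of left $E^\mathrm{op}$-modules between $\Hom_R(M,X) \oplus \Hom_R(M,Y)$ and $\Hom_R(M,M^q) = E^q$, under which $\Hom_R(M,\alpha) \oplus 1_{\Hom_R(M,Y)}$ is conjugated to $\Hom_R(M,\tilde{\alpha})$. Combining this with the fact that $[N, 1_N]$ is the neutral element of $\K_1^\mathrm{B}(\proj(E^\mathrm{op}))$ (see \resref{neutral}), the additivity of $\K_1^\mathrm{B}$ on split short exact sequences, and the invariance of the class under isomorphism in $\Omega\proj(E^\mathrm{op})$, one obtains the equality $[\Hom_R(M,X),\Hom_R(M,\alpha)] = [E^q, \Hom_R(M,\tilde{\alpha})]$ in $\K_1^\mathrm{B}(\proj(E^\mathrm{op}))$.

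Next I would identify this class with $\eta_{E^\mathrm{op}}(\tilde{\alpha}^T) = [(E^\mathrm{op})^q, \tilde{\alpha}^T]$, where the latter acts on the row space by right multiplication in $E^\mathrm{op}$ (see \resref{free}). The left $E^\mathrm{op}$-module $E^q = \Hom_R(M,M^q)$, with right $E$-action by pre-composition, is free of rank $q$ with basis given by the coordinate inclusions $\iota_1,\ldots,\iota_q \colon M \to M^q$, hence is naturally isomorphic to the row space $(E^\mathrm{op})^q$. A short matrix calculation then shows that the two automorphisms coincide under this identification: writing $\tilde{\alpha} = (\tilde{\alpha}_{ij})$ and $f = (f_1,\ldots,f_q) \in E^q$, the $i$-th component of $\tilde{\alpha} \circ f$ is $\sum_j \tilde{\alpha}_{ij} \circ f_j$, while the $i$-th entry of the row-vector product $(f_1,\ldots,f_q) \cdot \tilde{\alpha}^T$ computed in $E^\mathrm{op}$ is $\sum_j f_j \cdot_{E^\mathrm{op}} \tilde{\alpha}_{ij} = \sum_j \tilde{\alpha}_{ij} \circ f_j$. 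These agree, yielding the desired equality.

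The only real obstacle is bookkeeping of left versus right module structures: the appearance of the transpose $\tilde{\alpha}^T$ on the left-hand side is forced because composition on the left by a matrix with entries in $E$ and right multiplication of a row vector by a matrix with entries in $E^\mathrm{op}$ differ precisely by transposition, a point already recorded in \rmkref{cdot}.
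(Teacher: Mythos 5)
Your proof is correct and follows essentially the same route as the paper: apply $\Hom_R(M,-)$ to the tilde-construction data to reduce $[\Hom_R(M,X),\Hom_R(M,\alpha)]$ to $[E^q,\Hom_R(M,\tilde\alpha)]$, then identify the post-composition action of $\tilde\alpha$ on $\Hom_R(M,M^q)$ with right multiplication by $\tilde\alpha^T$ on the row space $(E^\mathrm{op})^q$ via the transposition remark. The entrywise computation you carry out at the end is precisely the content of the paper's appeal to \rmkref[]{cdot}.
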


\begin{proof}
  Write $(M,-)$ for $\Hom_R(M,-)$, and let \mbox{$\psi \colon X \oplus
    Y \stackrel{\cong}{\longrightarrow} M^q$} be as in Construction
  \resref{tilde}.  The $R$-module isomorphism $\psi$ induces an
  isomorphism of $E^\mathrm{op}$-modules,
  \begin{displaymath}
    \xymatrix@C=8ex{
      (M,X) \oplus (M,Y) = (M,X \oplus Y) \ar[r]^-{(M,\psi)}_-{\cong} & 
      (M,M^q) \cong E^q\,.
    }
  \end{displaymath}
  Consider the automorphism of the free $E^\mathrm{op}$-module $E^q$
  given by
  \begin{displaymath}
    (M,\psi) \big( (M,\alpha) \oplus 1_{(M,Y)} \big) (M,\psi)^{-1} 
    = (M,\psi(\alpha \oplus 1_Y)\psi^{-1}) 
    = (M,\tilde{\alpha})\;.
  \end{displaymath}
  We view elements in the $R$-module $M^q$ as columns and elements in
  $E^q$ as rows. The isomorphism \mbox{$E^q \cong (M,M^q)$} identifies
  a row vector \mbox{$\beta = (\beta_1,\ldots,\beta_q) \in E^q$} with
  the $R$-linear map \mbox{$\beta^T \colon M \to M^q$} whose
  coordinate functions are $\beta_1,\ldots,\beta_q$. The coordinate
  functions of \mbox{$(M,\tilde{\alpha})(\beta^T) =
    \tilde{\alpha}\circ \beta^T$} are the entries in the column
  $\tilde{\alpha}\beta^T$, where the matrix product used is
  \mbox{$\M_{q\times q}(E) \times \M_{q\times 1}(E) \to \M_{q\times
      1}(E)$}. Thus, the action of $(M,\tilde{\alpha})$ on a row
  \mbox{$\beta \in E^q$} is the row \mbox{$(\tilde{\alpha}\beta^T)^T
    \in E^q$}. In view of \rmkref{cdot} one has
  $(\tilde{\alpha}\beta^T)^T = \beta \cdot \tilde{\alpha}^T$, where
  ``$\cdot$'' is the product \mbox{$\M_{1\times q}(E^\mathrm{op})
    \times \M_{q\times q}(E^\mathrm{op}) \to \M_{1\times
      q}(E^\mathrm{op})$}. Consequently, over the ring
  $E^\mathrm{op}$, the automorphism $(M,\tilde{\alpha})$ of the
  $E^\mathrm{op}$-module $E^q$ acts on row vectors by multiplication
  with $\tilde{\alpha}^T$ from the right. These arguments show that 
$\eta_{E^\mathrm{op}}^{-1}$ applied to $[(M,X),(M,\alpha)]$ is $\tilde{\alpha}^T$; see \resref{free}.
\end{proof}

\begin{prp}
  \label{prp:sigma}
  Suppose, in addition to the blanket assumptions for this section, that $R$ is an algebra over its residue field $k$ and that \mbox{$\operatorname{char}(k) \neq 2$}. Then there is a group isomorphism,
  \begin{displaymath}
    \sigma \colon \Aut_R(M)_{\mathrm{ab}} \stackrel{\cong}{\longrightarrow} \K_1^{\mathrm{B}}(\mod(\MCM\,R))\;,
  \end{displaymath}
  given by
  \begin{displaymath}
    \alpha \longmapsto \big[\Hom_R(-,M)|_{\MCM\,R}\,,\, \Hom_R(-,\alpha)|_{\MCM\,R}\big]\;.
  \end{displaymath}
  Furthermore, there is an equality,
  \begin{displaymath}
    \sigma(\Xi) = \Im\K_1^{\mathrm{B}}(i)\;.
  \end{displaymath}
  Here $\Xi$ is the subgroup of $\Aut_R(M)_{\mathrm{ab}}$ given in \dfnref[]{Xi}, and $i \colon \mathcal{Y} \to \mod(\MCM\,R)$ is the inclusion functor from the Gabriel localization sequence \eqref{MCM-proj-ir}.
\end{prp}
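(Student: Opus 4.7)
My plan is to construct $\sigma$ as a composition of four known isomorphisms and then verify both the closed formula and the equality $\sigma(\Xi)=\Im\K_1^{\mathrm{B}}(i)$ by tracing representatives. Set $E=\End_R(M)$; this ring is semilocal by \lemref{End-semilocal}, is a $k$-algebra with $\operatorname{char}(k)\neq 2$, and $E^\mathrm{op}$ has finite global dimension by the theorem of Leuschke already used in \lemref{zeta-iso}. The candidate for $\sigma$ is the composite
$$
\Aut_R(M)_{\mathrm{ab}}=(E^\mathrm{op})^{*}_{\mathrm{ab}}
\xrightarrow{\theta_{E^\mathrm{op}}}\K_1^{\mathrm{C}}(E^\mathrm{op})
\xrightarrow{\eta_{E^\mathrm{op}}}\K_1^{\mathrm{B}}(\proj(E^\mathrm{op}))
\xrightarrow{\K_1^{\mathrm{B}}(\mathrm{inc})}\K_1^{\mathrm{B}}(\mod(E^\mathrm{op}))
\xrightarrow{\K_1^{\mathrm{B}}(f_M)}\K_1^{\mathrm{B}}(\mod(\MCM\,R)),
$$
whose four factors are isomorphisms by \thmref{V2} (together with \rmkref{char-neq-2}), \resref{free}, Bass' resolution theorem \cite[thm.~3.1.14]{MR1282290}, and \prpref{e-f}, respectively. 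To check the displayed formula for $\sigma$ I trace: \lemref{det-T} gives $\theta_{E^\mathrm{op}}(\det_E\tilde\alpha)=[\tilde\alpha^T]$, then \lemref{eta} gives $\eta_{E^\mathrm{op}}[\tilde\alpha^T]=[\Hom_R(M,X),\Hom_R(M,\alpha)]$, and the equivalence $f_M$ carries this to $[\Hom_R(-,X)|_{\MCM\,R},\Hom_R(-,\alpha)|_{\MCM\,R}]$ since $e_M$ sends $\Hom_R(-,X)|_{\MCM\,R}$ to $\Hom_R(M,X)$. Specializing to $X=M$ (so that $\tilde\alpha=\alpha$ by \exaref{diag}) recovers the stated formula, while the general identity
$$
\sigma(\det_E\tilde\alpha)=[\Hom_R(-,X)|_{\MCM\,R},\Hom_R(-,\alpha)|_{\MCM\,R}]\qquad(X\in\MCM\,R,\ \alpha\in\Aut_R(X))
$$
is the workhorse for handling the subgroup $\Xi$.

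For $\sigma(\Xi)\subseteq\Im\K_1^{\mathrm{B}}(i)$, fix a generator $(\det_E\tilde\alpha)(\det_E\tilde\beta_{j,\alpha})^{-1}(\det_E\tilde\gamma_{j,\alpha})$. Applying $\Hom_R(-,?)|_{\MCM\,R}$ to diagram \eqref{j-alpha-diagram} of \conref{j-alpha} and passing to the cokernel $F_j$, one obtains an automorphism of the four-term exact sequence of \thmref{Yos2}; write $\varphi_{j,\alpha}\in\Aut_{\mathcal{Y}}(F_j)$ for its rightmost component. Splitting this sequence into two short exact sequences at the image of the middle map, then using additivity of the symbols $[-,-]$ across short exact sequences in $\Omega\,\mod(\MCM\,R)$ together with \resref{neutral} to absorb the image term, yields
$$
[F_j,\varphi_{j,\alpha}]=[\Hom_R(-,M_j),\Hom_R(-,\alpha)]-[\Hom_R(-,X_j),\Hom_R(-,\beta_{j,\alpha})]+[\Hom_R(-,\tau(M_j)),\Hom_R(-,\gamma_{j,\alpha})]
$$
in $\K_1^{\mathrm{B}}(\mod(\MCM\,R))$. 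By the general formula for $\sigma$ above the right-hand side equals $\sigma$ of the chosen generator of $\Xi$, and since the left-hand side visibly lies in $\Im\K_1^{\mathrm{B}}(i)$, the inclusion follows.

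The reverse inclusion is the step I expect to be the main obstacle. Since $\mathcal{Y}$ is a length category with simple objects $F_1,\ldots,F_t$ by \thmref[Theorems~]{Yos1} and \thmref[]{Yos2}, Bass' devissage theorem together with the decomposition $\mathcal{Y}_{\mathrm{ss}}\simeq\prod_j\add F_j$ and the equivalences $\add F_j\simeq\proj D_j$ used in the proof of \thmref{Gersten-transformation-2} — where $D_j$ is a division ring — identifies $\K_1^{\mathrm{B}}(\mathcal{Y})$ with $\bigoplus_j\K_1^{\mathrm{C}}(D_j)$, which via the Dieudonn\'e determinant is generated by classes $[F_j,\varphi]$ with $\varphi\in\Aut_{\mathcal{Y}}(F_j)$. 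It therefore suffices to realize each such $[F_j,\varphi]$ as $\sigma$ of a generator of $\Xi$. For this I argue that $\Hom_R(-,M_j)|_{\MCM\,R}$ is the projective cover of $F_j$ in $\mod(\MCM\,R)$: under the equivalence $e_M$ it corresponds to the indecomposable projective $E^{\mathrm{op}}$-module $\Hom_R(M,M_j)$, and semiperfectness of $E^{\mathrm{op}}$ — which holds because $E$ is a module-finite algebra over the henselian ring $R$ — forces this to be the projective cover of the simple $E^{\mathrm{op}}$-module attached to $F_j$. Now \lemref{cover} lifts $\varphi$ to an automorphism of $\Hom_R(-,M_j)|_{\MCM\,R}$, which by Yoneda has the form $\Hom_R(-,\alpha)$ for some $\alpha\in\Aut_R(M_j)$; using the chosen $\beta_{j,\alpha},\gamma_{j,\alpha}$, commutativity of the lifted diagram forces the induced rightmost automorphism of $F_j$ to be precisely $\varphi$, and the additivity identity from the previous paragraph then expresses $\K_1^{\mathrm{B}}(i)([F_j,\varphi])$ as $\sigma$ applied to the corresponding generator of $\Xi$. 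The technical heart of the argument is this projective-cover claim; once it is in hand the proof closes routinely.
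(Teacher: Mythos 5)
Your proposal is correct and follows essentially the same route as the paper: defining $\sigma$ as the composite $\K_1^{\mathrm{B}}(f_M)\circ\K_1^{\mathrm{B}}(\jmath)\circ\eta_{E^{\mathrm{op}}}\circ\theta_{E^{\mathrm{op}}}$, verifying the closed formula through \lemref{det-T} and \lemref{eta}, getting the forward inclusion by applying Yoneda to the commutative diagram \eqref{j-alpha-diagram} and using additivity on the induced four-term exact sequence, and getting the reverse inclusion by generating $\K_1^{\mathrm{B}}(\mathcal{Y})$ with the classes $[F_j,\varphi]$ and lifting $\varphi$ along the projective cover $\Hom_R(-,M_j)\twoheadrightarrow F_j$ via \lemref{cover} and the Yoneda lemma. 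The one place you diverge is the justification of the projective-cover claim: you transport along $e_M$ and invoke semiperfectness of $E^{\mathrm{op}}$ (valid, since $E$ is module-finite over the henselian $R$, and $F_j$ is simple even in $\mod(\MCM\,R)$ because $\mathcal{Y}$ is a Serre subcategory), whereas the paper argues directly in $\mod(\MCM\,R)$ by appealing to Auslander's results on Krull--Schmidt varieties. The two arguments are interchangeable and of comparable length, so this is a genuine but small alternative; everything else matches the paper's proof.
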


\begin{proof}
  We define $\sigma$ to be the composite of the following isomorphisms,
  \begin{equation}
    \label{eq:sigma}
    \begin{gathered}
    \setlength\arraycolsep{0.1pt} 
    \begin{array}{ccl}
      \Aut_R(M)_{\mathrm{ab}} = E^*_{\mathrm{ab}} =
      (E^{\mathrm{op}})^*_{\mathrm{ab}}
      & \xymatrix@C=13ex{ \ar[r]_-{\cong}^-{\theta_{E^\mathrm{op}}} &} 
      & \K_1^{\mathrm{C}}(E^\mathrm{op}) \vspace*{-0.4ex} \\
      & \xymatrix@C=13ex{ \ar[r]_-{\cong}^-{\eta_{E^{\mathrm{op}}}} &} 
      & \K_1^{\mathrm{B}}(\proj(E^{\mathrm{op}})) \vspace*{-0.4ex} \\
      & \xymatrix@C=13ex{ \ar[r]_-{\cong}^-{\K_1^{\mathrm{B}}(\jmath)} &} 
      & \K_1^{\mathrm{B}}(\mod(E^{\mathrm{op}})) \vspace*{-0.4ex} \\
      & \xymatrix@C=13ex{ \ar[r]_-{\cong}^-{\K_1^{\mathrm{B}}(f_M)} &} 
      & \K_1^{\mathrm{B}}(\mod(\MCM\,R))\;.
    \end{array}
    \end{gathered}
  \end{equation}

  The ring $E$, and hence also its opposite ring $E^\mathrm{op}$, is
  semilocal by \lemref{End-semilocal}. By assumption, $R$ is a
  $k$-algebra, and hence so is $E^\mathrm{op}$.  Thus, in view of
  \rmkref{char-neq-2} and the assumption \mbox{$\operatorname{char}(k)
    \neq 2$}, we get the isomorphism \smash{$\theta_{E^\mathrm{op}}$}
  from \thmref{V2}. It maps \mbox{$\alpha \in
    \Aut_R(M)_{\mathrm{ab}}$} to the image of the
  \mbox{$1\!\times\!1$} matrix $(\alpha) \in \GL(E^{\mathrm{op}})$ in
  $\K_1^{\mathrm{C}}(E^\mathrm{op})$.

  The isomorphism $\eta_{E^{\mathrm{op}}}$ is described in
  \resref{free}; it maps \mbox{$\xi \in \GL_n(E^{\mathrm{op}})$} to
  the class \mbox{$[(E_E)^n,\xi] \in
    \K_1^{\mathrm{B}}(\proj(E^{\mathrm{op}}))$}.

  The third map in \eqref{sigma} is induced by the
  inclusion \mbox{$\jmath \colon \proj(E^{\mathrm{op}}) \to
    \mod(E^{\mathrm{op}})$}. By Leuschke \cite[thm.~6]{GJL07} the
  noetherian ring $E^{\mathrm{op}}$ has finite global dimension and
  hence Bass' resolution theorem \cite[VIII\S4 thm.~(4.6)]{MR0249491},
  or Rosenberg \cite[thm.~3.1.14]{MR1282290}, implies that
  $\K_1^{\mathrm{B}}(\jmath)$ is an isomorphism. It~maps an element
  $[P,\alpha] \in \K_1^{\mathrm{B}}(\proj(E^{\mathrm{op}}))$ to
  $[P,\alpha] \in \K_1^{\mathrm{B}}(\mod(E^{\mathrm{op}}))$.

  The fourth and last isomorphism $\K_1^{\mathrm{B}}(f_M)$ in \eqref{sigma} is induced by the equivalence $f_M \colon
  \mod(E^{\mathrm{op}}) \to \mod(\MCM\,R)$ from \prpref{e-f}.

  Thus, $\sigma$ is an isomorphism that
  maps an element \mbox{$\alpha \in \Aut_R(M)_{\mathrm{ab}}$} to the
  class
  \begin{displaymath}
    \big[E_E \otimes_E \Hom_R(-,M)|_{\MCM\,R}\,,\, (\alpha\,\cdot) \otimes_E \Hom_R(-,M)|_{\MCM\,R}\big]\;,
  \end{displaymath}
  which is evidently the same as the class
  \begin{displaymath}
    \big[\Hom_R(-,M)|_{\MCM\,R}\,,\, \Hom_R(-,\alpha)|_{\MCM\,R}\big]\;.
  \end{displaymath}
  
  It remains to show the equality $\sigma(\Xi) = \Im\K_1^{\mathrm{B}}(i)$. By the definition \eqref{sigma} of $\sigma$ this is tantamount to showing that $\K_1^{\mathrm{B}}(\jmath) \eta_{E^{\mathrm{op}}} \theta_{E^\mathrm{op}}(\Xi)= \K_1^{\mathrm{B}}(f_M)^{-1}(\Im\K_1^{\mathrm{B}}(i))$. As $e_M$ is a quasi-inverse of $f_M$, see \prpref{e-f}, we have
  \mbox{$\K_1^{\mathrm{B}}(f_M)^{-1} = \K_1^{\mathrm{B}}(e_M)$}, and hence we need to show the equality
\begin{equation}
  \label{eq:desired-identity}
  \K_1^{\mathrm{B}}(\jmath) \eta_{E^{\mathrm{op}}} \theta_{E^\mathrm{op}}(\Xi) \,=\, \K_1^{\mathrm{B}}(e_M)(\Im\K_1^{\mathrm{B}}(i))\;.
\end{equation}

By \dfnref{Xi}, the group $\Xi$ is generated by all elements of the form
  \begin{displaymath}
    \xi_{j,\alpha} := \textstyle{
    (\det_E\tilde{\alpha})
    (\det_E\tilde{\beta}_{j,\alpha})^{-1}
    (\det_E\tilde{\gamma}_{j,\alpha})
    } \in  E^*_{\mathrm{ab}}
  \end{displaymath}
  for $j \in \{1,\ldots,t\}$ and \mbox{$\alpha \in \Aut_R(M_j)$}; here $\beta_{j,\alpha} \!\in\!
  \Aut_R(X_j)$ and $\gamma_{j,\alpha} \!\in\! \Aut_R(\tau(M_j))$ are choices of automorphisms such that
  the diagram \eqref{j-alpha-diagram} is commutative. It follows from \lemref{det-T} that
  \begin{displaymath}
    \xi_{j,\alpha} =
    \textstyle{
    (\det_{E^\mathrm{op}}\tilde{\alpha}^T)
    (\det_{E^\mathrm{op}}\tilde{\beta}_{j,\alpha}^T)^{-1}
    (\det_{E^\mathrm{op}}\tilde{\gamma}_{j,\alpha}^T) \in (E^{\mathrm{op}})^*_{\mathrm{ab}}\;.
    }
  \end{displaymath}
  By \dfnref{det} the homomorphism $\det_{E^\mathrm{op}}$ is the inverse of $\theta_{E^\mathrm{op}}$, and consequently the group $\theta_{E^\mathrm{op}}(\Xi)$ is generated by the elements
  \begin{displaymath}
    \xi'_{j,\alpha} := \theta_{E^\mathrm{op}}(\xi_{j,\alpha}) =
    \tilde{\alpha}^T
    (\tilde{\beta}_{j,\alpha}^T)^{-1}
    \tilde{\gamma}_{j,\alpha}^T \in \K_1^{\mathrm{C}}(E^\mathrm{op})\;.
  \end{displaymath}
  Thus $\eta_{E^{\mathrm{op}}}\theta_{E^\mathrm{op}}(\Xi)$ is generated by the elements
$\xi''_{j,\alpha} := \eta_{E^{\mathrm{op}}}(\xi'_{j,\alpha}) \in \K_1^{\mathrm{B}}(\proj(E^{\mathrm{op}}))$, and it follows from \lemref{eta} that
  \begin{align*}
    \xi''_{j,\alpha} &= \big[\Hom_R(M,M_j),\Hom_R(M,\alpha)\,\big]
    - \big[\Hom_R(M,X_j),\Hom_R(M,\beta_{j,\alpha})\,\big] \\
    &\phantom{=} \quad
    + \big[\Hom_R(M,\tau(M_j)),\Hom_R(M,\gamma_{j,\alpha})\,\big]\;.
  \end{align*}
  Thus, the group $\K_1^{\mathrm{B}}(\jmath)\eta_{E^{\mathrm{op}}}\theta_{E^\mathrm{op}}(\Xi)$ on the left-hand side in 
  \eqref{desired-identity} is generated by the elements $\K_1^{\mathrm{B}}(\jmath)(\xi''_{j,\alpha})$. Note that $\K_1^{\mathrm{B}}(\jmath)(\xi''_{j,\alpha})$ is nothing but $\xi''_{j,\alpha}$ viewed as an element in $\K_1^{\mathrm{B}}(\mod(E^{\mathrm{op}}))$. We have reached the following conclusion:
  \begin{quote}
The group $\K_1^{\mathrm{B}}(\jmath)\eta_{E^{\mathrm{op}}}\theta_{E^\mathrm{op}}(\Xi)$ is generated by the elements $\xi''_{j,\alpha}$, where $j$ ranges over $\{1,\ldots,t\}$ and $\alpha$ over all automorphisms~of~$M_j$.
  \end{quote}

  To give a useful set of generators of the group $\K_1^{\mathrm{B}}(e_M)(\Im\K_1^{\mathrm{B}}(i))$ on the right-hand side in \eqref{desired-identity}, recall from
  \thmref[Theorems~]{Yos1} and \thmref[]{Yos2} that every element in 
  $\mathcal{Y}$ has finite length and that the simple objects in
  $\mathcal{Y}$ are, up to isomorphism, exactly the functors
  $F_1,\ldots,F_t$.  Thus, by 
  \cite[(proof of) thm.~3.1.8(2)]{MR1282290} the group
  $\K_1^{\mathrm{B}}(\mathcal{Y})$ is generated by all elements of the form
  $[F_j,\varphi]$, where $j \in \{1,\ldots,t\}$ and $\varphi$ is an
  auto\-morphism of $F_j$. It follows that the group $\Im\K_1^{\mathrm{B}}(i)$ is generated by the elements $\K_1^{\mathrm{B}}(i)([F_j,\varphi])$. Note that $\K_1^{\mathrm{B}}(i)([F_j,\varphi])$ is nothing but $[F_j,\varphi]$ viewed as an element in $\K_1^{\mathrm{B}}(\mod(\MCM\,R))$. By definition of the functor $e_M$, see \prpref[Prop.~]{e-f}, one has
  \begin{displaymath}
    \lambda_{j,\varphi} := \K_1^{\mathrm{B}}(e_M)([F_j,\varphi]) = [F_jM,\varphi_M]\;.
  \end{displaymath}
  We have reached the following conclusion:
  \begin{quote}
The group $\K_1^{\mathrm{B}}(e_M)(\Im\K_1^{\mathrm{B}}(i))$ is generated by the elements $\lambda_{j,\varphi}$, where $j$ ranges over $\{1,\ldots,t\}$ and $\varphi$ over all automorphisms of $F_j$.
  \end{quote}

  With the descriptions of the generators $\xi''_{j,\alpha}$ and $\lambda_{j,\varphi}$ at hand, we are now in a position to prove the identity \eqref{desired-identity}. 

Consider an arbitrary generator $\xi''_{j,\alpha}$ in the group $\K_1^{\mathrm{B}}(\jmath)\eta_{E^{\mathrm{op}}}\theta_{E^\mathrm{op}}(\Xi)$. Recall from \thmref{Yos2} that there is an exact sequence in
  $\mod(\MCM\,R)$,
  \begin{displaymath}
    0 \longrightarrow \Hom_R(-,\tau(M_j))
    \longrightarrow \Hom_R(-,X_j)
    \longrightarrow \Hom_R(-,M_j)
    \longrightarrow F_j
    \longrightarrow 0\;.
  \end{displaymath}
  Thus, the commutative diagram \eqref{j-alpha-diagram} in $\MCM\,R$
  induces a commutative diagram in $\mod(\MCM\,R)$ with exact row(s),
  \begin{displaymath}
    \begin{gathered}
      \xymatrix@R=5ex@C=4.5ex{ 0 \ar[r] & \Hom_R(-,\tau(M_j))
        \ar[d]_-{\cong}^-{\Hom_R(-,\gamma_{j,\alpha})} \ar[r] &
        \Hom_R(-,X_j) \ar[d]_-{\cong}^-{\Hom_R(-,\beta_{j,\alpha})}
        \ar[r] & \Hom_R(-,M_j) \ar[d]_-{\cong}^-{\Hom_R(-,\alpha)}
        \ar[r] &
        F_j \ar@{-->}[d]_-{\cong}^-{\varphi} \ar[r] & 0\;\phantom{,} \\
        0 \ar[r] & \Hom_R(-,\tau(M_j)) \ar[r] & \Hom_R(-,X_j) \ar[r] &
        \Hom_R(-,M_j) \ar[r] & F_j \ar[r] & 0\;,  }
    \end{gathered}
  \end{displaymath}  
  where $\varphi$ is the uniquely determined natural
  endotransformation of $F_j$ that makes this diagram commutative.
  Note that $\varphi$ is an automorphism by the Five Lemma, and thus $[F_j,\varphi]$ is a well-defined element in $\K_1^{\mathrm{B}}(\mod(\MCM\,R))$. The diagram above is an exact sequence in the loop category $\Omega(\mod(\MCM\,R))$, see \resref{loop-category} and \resref{K1B}, so in the group $\K_1^{\mathrm{B}}(\mod(\MCM\,R))$ there is an equality:
  \begin{align*}
    [F_j,\varphi] &= \big[\Hom_R(-,M_j),\Hom_R(-,\alpha)\,\big]
    - \big[\Hom_R(-,X_j),\Hom_R(-,\beta_{j,\alpha})\,\big] \\
    &\phantom{=} \quad
    + \big[\Hom_R(-,\tau(M_j)),\Hom_R(-,\gamma_{j,\alpha})\,\big]\;.
  \end{align*}
  Applying the homomorphism $\K_1^{\mathrm{B}}(e_M)$ to this equality, we get $\lambda_{j,\varphi}=\xi''_{j,\alpha}$. These arguments show that every generator $\xi''_{j,\alpha}$ has the form $\lambda_{j,\varphi}$ for some $\varphi$, and hence the inclusion "$\subseteq$" in \eqref{desired-identity} is established.

Conversely, consider an arbitrary generator  $\lambda_{j,\varphi}$ in the group $\K_1^{\mathrm{B}}(e_M)(\Im\K_1^{\mathrm{B}}(i))$. As the category $\MCM\,R$ is a Krull--Schmidt variety in the
  sense of Auslander \cite[II, \S2]{MR0349747}, it follows by
  \mbox{\cite[II, prop.~2.1(b,c)]{MR0349747}} and \mbox{\cite[I,
    prop.~4.7]{MR0349747}} that \mbox{$\Hom_R(-,M_j) \twoheadrightarrow F_j$}
  is a projective cover in $\mod(\MCM\,R)$ in the sense of
  \dfnref{cover}. In particular, $\varphi$ lifts to a natural
  transformation $\psi$ of $\Hom_R(-,M_j)$, which must be an
  automorphism by \lemref{cover}. Thus we have a commutative diagram
  in $\mod(\MCM\,R)$,
  \begin{displaymath}
    \xymatrix@R=5ex@C=4.5ex{
      \Hom_R(-,M_j) \ar@{-->}[d]_-{\psi}^-{\cong} \ar@{->>}[r]
      & F_j\;\phantom{.} \ar@<-3pt>[d]_-{\cong}^-{\varphi} \\
      \Hom_R(-,M_j) \ar@{->>}[r] & F_j\;.
    }
  \end{displaymath}
  As the Yoneda functor \mbox{$y_M \colon \MCM\,R \to
    \mod(\MCM\,R)$} is fully faithful, see \cite[lem. (4.3)]{yos},
  there exists a unique automorphism $\alpha$ of $M_j$ such that $\psi
  = \Hom_R(-,\alpha)$.  For this particular $\alpha$, the arguments
  above show that $\lambda_{j,\varphi}=\xi''_{j,\alpha}$. Thus every generator $\lambda_{j,\varphi}$ has the form $\xi''_{j,\alpha}$ for some $\alpha$, and hence the inclusion "$\supseteq$" in \eqref{desired-identity} holds.
\end{proof}

\begin{obs}
  \label{obs:rho}
  For any commutative noetherian local ring $R$, there is an
  iso\-morphism \smash{\mbox{$\rho_R \colon R^*
      \stackrel{\cong}{\longrightarrow} \K_1^{\mathrm{B}}(\proj\,R)$}}
  given by the composite of
  \begin{displaymath}
    \smash{\xymatrix{
      R^* \ar[r]_-{\cong}^-{\theta_R} & \K_1^{\mathrm{C}}(R)
      \ar[r]^-{\eta_R}_-{\cong} & \K_1^{\mathrm{B}}(\proj\,R)\;.
    }}
  \end{displaymath}
  The first map is described in \resref{discussion}; it is an
  isomorphism by Srinivas \cite[exa.~(1.6)]{MR1382659}. The second
  isomorphism is discussed in \resref{free}. Thus, $\rho_R$ maps $r
  \in R^*$ to $[R,r1_R]$.
\end{obs}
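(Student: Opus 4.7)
The plan is to verify that each of the two constituent maps $\theta_R$ and $\eta_R$ is an isomorphism and then to trace the image of $r \in R^*$ through the composite in order to confirm the formula $\rho_R(r) = [R, r 1_R]$. There is really nothing more to do than to combine results already recorded in the paper.

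For the first map, since $R$ is commutative local, it is in particular commutative semilocal, and the paragraph immediately preceding \dfnref{det} already treats this case: for any commutative semilocal ring $A$ one has $[A^*,A^*]=\{1\}$, so $\theta_A$ coincides with the surjective Whitehead determinant $\vartheta_A$, and the ordinary determinant homomorphisms $\det_n \colon \GL_n(A) \to A^*$ assemble into a homomorphism $\det_A \colon \K_1^{\mathrm{C}}(A) \to A^*$ with $\det_A\theta_A = 1_{A^*}$. Since $\theta_A$ is surjective, this forces $\theta_R$ to be an isomorphism. (Alternatively one may invoke Srinivas \cite[exa.~(1.6)]{MR1382659} directly.) That $\eta_R \colon \K_1^{\mathrm{C}}(R) \to \K_1^{\mathrm{B}}(\proj\,R)$ is an isomorphism for every unital ring is recorded in \resref{free}.

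To obtain the explicit description of $\rho_R(r)$, I would just unwind the definitions: by \eqref{vartheta} the homomorphism $\theta_R$ sends $r \in R^*$ to the class of the $1 \times 1$ matrix $(r) \in \GL_1(R) \subseteq \GL(R)$ in $\K_1^{\mathrm{C}}(R)$, and by the explicit description of $\eta_A$ in \resref{free} this class is in turn sent to $[R, r 1_R] \in \K_1^{\mathrm{B}}(\proj\,R)$, since the matrix $(r)$, viewed as an automorphism of the free left $R$-module $R$ acting by right multiplication, is just the $R$-linear endomorphism $r 1_R$ (the ring $R$ being commutative). I do not anticipate any genuine obstacle; the observation is a pure bookkeeping consequence of the isomorphisms and the explicit formulas already supplied by the paper.
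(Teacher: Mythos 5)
Your proposal is correct and follows exactly the same route the paper does: it verifies that $\theta_R$ is an isomorphism by the commutative-semilocal discussion preceding \dfnref{det} (or Srinivas), verifies that $\eta_R$ is an isomorphism by \resref{free}, and then unwinds the two explicit formulas to land on $[R,r1_R]$. The only thing worth noting is that you make explicit the small point that right multiplication by $r$ agrees with $r1_R$ because $R$ is commutative, which the paper leaves implicit.
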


We are finally in a position to prove the main result.

\begin{proof}[Proof of \thmref{1}]
  By \prpref{K1Q-is-K1B} we can identify $\K_1(\mod\,R)$ with the group $\K_1^{\mathrm{B}}(\mod\,R)$. Recall that $i$ and $r$ denote the inclusion and restriction functors from the localization sequence \eqref{MCM-proj-ir}.
  By the relations that define $\K_1^{\mathrm{B}}(\mod\,R)$, see
  \resref{K1B}, there is a homomorphism \mbox{$\pi_0 \colon \Aut_R(M)
    \to \K_1^{\mathrm{B}}(\mod\,R)$} given by \mbox{$\alpha \mapsto
    [M,\alpha]$}. Since $\K_1^{\mathrm{B}}(\mod\,R)$ is abelian,
  $\pi_0$ induces a homomorphism $\pi$, which is displayed as the upper horizontal map in the following diagram,
  \begin{equation}
    \label{eq:pi-sigma}
    \begin{gathered}
    \xymatrix@C=7ex{ 
      \Aut_R(M)_{\mathrm{ab}} \ar[d]_-{\sigma}^-{\cong} \ar[r]^-{\pi} &
      \K_1^{\mathrm{B}}(\mod\,R) \ar[d]^-{\K_1^{\mathrm{B}}(f_R)}_-{\cong} \\
      \K_1^{\mathrm{B}}(\mod(\MCM\,R)) \ar[r]^-{\K_1^{\mathrm{B}}(r)} & 
      \K_1^{\mathrm{B}}(\mod(\proj\,R))\;.
    }
    \end{gathered}
  \end{equation}
  Here $\sigma$ is the isomorphism from \prpref{sigma}, and the isomorphism $\K_1^{\mathrm{B}}(f_R)$ is induced by the equivalence $f_R$ from \obsref{M-is-A}. The diagram \eqref{pi-sigma} is commutative, indeed, $\K_1^{\mathrm{B}}(r)\sigma$ and $\K_1^{\mathrm{B}}(f_R)\pi$ both map \mbox{$\alpha \in \Aut_R(M)_{\mathrm{ab}}$} to the class
  \begin{displaymath}
    \big[\Hom_R(-,M)|_{\proj\,R}\,,\, \Hom_R(-,\alpha)|_{\proj\,R}\big]\;.
  \end{displaymath}
  By \lemref{K1Br-surjective} the homomorphism $\K_1^{\mathrm{B}}(r)$ is surjective, and hence so is $\pi$.  Exactness of the sequence in \prpref{K1Q-is-K1B}(b) and commutativity of the diagram
  \eqref{pi-sigma} show that \mbox{$\Ker\pi =\sigma^{-1}(\Im\K_1^{\mathrm{B}}(i))$}. Therefore \prpref{sigma} implies that there is an equality $\Ker \pi = \Xi$, and it follows that $\pi$ induces an isomorphism,
  \begin{displaymath}
    \widehat{\pi} \colon \Aut_R(M)_{\mathrm{ab}}/\Xi
    \stackrel{\cong}{\longrightarrow} \K_1^{\mathrm{B}}(\mod\,R)\;.
  \end{displaymath}
  This proves the first assertion in \thmref{1}. 

  To prove the second assertion, let $\mathrm{inc} \colon \proj\,R \to \mod\,R$ denote the inclusion functor.
Note that the Gersten--Sherman transformation identifies the homomorphisms $\K_1(\mathrm{inc})$ and
  $\K_1^{\mathrm{B}}(\mathrm{inc})$; indeed $\zeta_{\proj\,R}$ is an isomorphism by \thmref{Gersten-transformation-1} and $\zeta_{\mod\,R}$ is an isomorphism by \prpref{K1Q-is-K1B}(a). Thus, we
  must show that $\K_1^{\mathrm{B}}(\mathrm{inc})$ can be identified
  with the homomorphism \mbox{$\lambda \colon R^* \to
    \Aut_R(M)_{\mathrm{ab}}/\Xi$} given by \mbox{$r \mapsto r1_R
    \oplus 1_{M'}$} (recall that we have written $M = R \oplus M'$).
To this end, consider the isomorphism
  \smash{\mbox{$\rho_R \colon R^* \to \K_1^{\mathrm{B}}(\proj\,R)$}}
  from \obsref{rho} given by \mbox{$r \mapsto [R,r1_R]$}. The fact
  that $\K_1^{\mathrm{B}}(\mathrm{inc})$ and $\lambda$ are isomorphic
  maps now follows from the diagram,
  \begin{equation*}
    \xymatrix@C=3pc{ 
      R^* \ar[d]^-{\cong}_-{\rho_R} \ar[r]^-{\lambda} & 
      \Aut_R(M)_{\mathrm{ab}}/\Xi \ar[d]^-{\widehat{\pi}}_-{\cong} \\
      \K_1^{\mathrm{B}}(\proj\,R) \ar[r]^-{\K_1^{\mathrm{B}}(\mathrm{inc})} & 
      \K_1^{\mathrm{B}}(\mod\,R)\;,
    }
  \end{equation*}
  which is commutative.  Indeed, for $r \in R^*$ one has
  \begin{displaymath}
    (\widehat{\pi}\lambda)(r) = [M,r1_R \oplus 1_{M'}] =
    [R,r1_R]+[M',1_{M'}] = [R,r1_R] =
    (\K_1^{\mathrm{B}}(\mathrm{inc})\rho_R)(r)\;,
  \end{displaymath}
  where the penultimate equality is by \resref{neutral}.
\end{proof}

\section{Abelianization of Automorphism Groups}
\label{sec:Aut-ab}

To apply \thmref{1}, one must compute $\Aut_R(M)_\mathrm{ab}$,
i.e.~the abelianization of the automorphism group of the
representation generator $M$.  In \prpref{Aut-ab-exa} we compute
$\Aut_R(M)_\mathrm{ab}$ for the $R$-module \mbox{$M=R \oplus \m$},
which is a representation generator for $\MCM\,R$ if $\m$ happens to
be the only non-free indecomposable maximal Cohen--Macaulay module
over $R$. Specific examples of rings for which this is the case will be studied in
\secref{examples}.  Throughout this section, $A$ denotes any ring.

\begin{dfn}
  \label{dfn:row-operations}
  Let $N_1,\ldots,N_s$ be $A$-modules, and set
  \mbox{$N=N_1\oplus\cdots\oplus N_s$}. We view elements in $N$ as
  column vectors.

  For \mbox{$\varphi \in \Aut_A(N_i)$} we denote by
  $d_i(\varphi)$ the automorphism of $N$ which has as its
  diagonal
  \smash{$1_{N_1},\ldots,1_{N_{i-1}},\varphi,1_{N_{i+1}},\ldots,1_{N_s}$}
  and $0$ in all other entries.

  For \mbox{$i \neq j$} and \mbox{$\mu \in \Hom_A(N_j,N_i)$} we
  denote by $e_{ij}(\mu)$ the automorphism of $N$ with diagonal
  \mbox{$1_{N_1},\ldots,1_{N_s}$}, and whose only non-trivial
  off-diagonal entry is $\mu$ in position $(i,j)$.
\end{dfn}

\begin{lem}
  \label{lem:e-commutator}
  Let \mbox{$N_1,\ldots,N_s$} be $A$-modules and set
  \mbox{$N=N_1\oplus\cdots\oplus N_s$}. If~\mbox{$\,2 \in A$} is a unit,
  if \mbox{$i \neq j$}, and if \mbox{$\mu \in \Hom_A(N_j,N_i)$} then
  $e_{ij}(\mu)$ is a commutator in $\Aut_A(N)$.
\end{lem}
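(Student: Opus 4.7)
The plan is to exhibit $e_{ij}(\mu)$ explicitly as a single commutator of the form $[d_i(\varphi),e_{ij}(\mu)]$ by exploiting the fact that conjugating an elementary automorphism by a diagonal automorphism rescales the off-diagonal entry in a controlled way.

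First, I would record the two bookkeeping identities that are immediate from matrix multiplication in the block form determined by the decomposition $N=N_1\oplus\cdots\oplus N_s$:
\begin{align*}
  e_{ij}(\mu)\,e_{ij}(\nu) &= e_{ij}(\mu+\nu), \\
  d_i(\varphi)\,e_{ij}(\mu)\,d_i(\varphi)^{-1} &= e_{ij}(\varphi\circ\mu),
\end{align*}
for $\varphi\in\Aut_A(N_i)$ and $\mu,\nu\in\Hom_A(N_j,N_i)$. The first identity follows because only the $(i,i)$- and $(i,j)$-entries of the first factor contribute to the $(i,j)$-entry of the product; the second is a straightforward row/column scaling argument.

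The key observation is that since $2$ is a unit in $A$, the endomorphism $2\cdot 1_{N_i}$ is an automorphism of $N_i$ with inverse $2^{-1}\cdot 1_{N_i}$, so $\varphi:=2\cdot 1_{N_i}\in\Aut_A(N_i)$. I would then compute
\[
  [d_i(\varphi),e_{ij}(\mu)]
  \,=\, d_i(\varphi)\,e_{ij}(\mu)\,d_i(\varphi)^{-1}\,e_{ij}(\mu)^{-1}
  \,=\, e_{ij}(\varphi\circ\mu)\,e_{ij}(-\mu)
  \,=\, e_{ij}\bigl((\varphi-1_{N_i})\circ\mu\bigr)
  \,=\, e_{ij}(\mu),
\]
using the two identities above together with $(\varphi-1_{N_i})\circ\mu = 2\mu-\mu = \mu$. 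This realizes $e_{ij}(\mu)$ as a commutator in $\Aut_A(N)$, completing the proof.

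There is no real obstacle here; the only subtlety is making sure that $2\cdot 1_{N_i}$ is genuinely invertible, which is precisely what the hypothesis $2\in A^*$ guarantees, and that the indexing in the conjugation formula yields $\varphi\mu$ rather than $\mu\varphi$ (so one must conjugate by $d_i$, not by $d_j$).
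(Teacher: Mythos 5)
Your proof is correct and takes essentially the same approach as the paper: exhibit $e_{ij}(\mu)$ as a single explicit commutator of one elementary automorphism and one diagonal automorphism, using the conjugation identity and the invertibility of $2$. The paper uses the slightly different identity $e_{ij}(\mu)=[\,e_{ij}(\tfrac{\mu}{2}),\,d_j(-1_{N_j})\,]$ (conjugation by $d_j$ scales the off-diagonal entry on the right, and the unit $2$ is used to halve $\mu$ rather than to invert $2\cdot 1_{N_i}$), but this is a cosmetic variant of the same idea.
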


\begin{proof}
  The commutator of $\varphi$ and $\psi$ in $\Aut_A(N)$ is
  $[\varphi,\psi]=\varphi\psi\varphi^{-1}\psi^{-1}$. It is easily
  verified that
  $e_{ij}(\mu)=[e_{ij}(\frac{\mu}{2}),d_j(-1_{N_j})]$ if $i
  \neq j$.
\end{proof}

The idea in the proof above is certainly not new. It appears, for
example, already in Litoff \cite[proof of thm.~2]{MR0068541} in the
case $s=2$. \new{Of course, if $s \geqslant 3$ then $e_{ij}(\mu)$ is a commutator even without the assumption that $2$ is a unit; see e.g.~\cite[lem.~2.1.2(c)]{MR1282290}.}

\begin{lem}
  \label{lem:factors}
  Let $X$ and $Y$ be non-isomorphic $A$-modules with local
  endomorphism rings.  Let \mbox{$\varphi, \psi \in \End_A(X)$} and
  assume that $\psi$ factors through $Y$.  Then one has \mbox{$\psi
    \notin \Aut_A(X)$}. Futhermore, \mbox{$\varphi \in \Aut_A(X)$} if
  and only if \mbox{$\varphi + \psi \in \Aut_A(X)$}.
\end{lem}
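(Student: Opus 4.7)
The plan is to use two standard facts about local endomorphism rings: first, a module with local endomorphism ring is indecomposable (and nonzero); second, in any local ring the set of non-units coincides with the unique maximal ideal, and this set is closed under addition.

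For the first assertion, I will argue by contradiction. Suppose $\psi \in \Aut_A(X)$ and write a factorization $\psi = \beta\alpha$ with $\alpha \colon X \to Y$ and $\beta \colon Y \to X$. Then $(\psi^{-1}\beta)\alpha = 1_X$, so $\alpha$ is a split monomorphism and $X$ is isomorphic to a direct summand of $Y$. Since $\End_A(Y)$ is local, $Y$ is indecomposable and nonzero; likewise $X \neq 0$, so this forces $X \cong Y$, contradicting the hypothesis. Hence $\psi \notin \Aut_A(X)$.

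For the second assertion, let $\mathfrak{m}$ denote the maximal ideal of the local ring $\End_A(X)$, i.e.\ the set of non-units. By the first part, $\psi \in \mathfrak{m}$. Since $\mathfrak{m}$ is closed under addition, $\varphi \in \mathfrak{m}$ if and only if $\varphi + \psi \in \mathfrak{m}$; equivalently, $\varphi$ is a unit (i.e.\ an automorphism of $X$) if and only if $\varphi + \psi$ is a unit.

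I expect no real obstacle here; the main point is to recall the elementary fact that a split monomorphism into an indecomposable nonzero module must be an isomorphism, which handles the first half, after which the second half is immediate from the additive closure of the maximal ideal of a local ring.
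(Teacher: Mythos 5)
Your proof is correct and follows essentially the same route as the paper. The only cosmetic difference is that you argue the first factor $\alpha\colon X\to Y$ is a split monomorphism, whereas the paper argues the second factor $\psi''\colon Y\to X$ is a split epimorphism; both conclude via indecomposability of $Y$ (and $X\neq 0$) that $X\cong Y$, and the second assertion is handled identically via the additive closure of non-units in the local ring $\End_A(X)$.
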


\begin{proof}
  Write \mbox{$\psi=\psi''\psi'$} with \mbox{$\psi'\colon X \to Y$}
  and \mbox{$\psi'' \colon Y \to X$}. If $\psi$ is an automorphism,
  then $\psi''$ is a split epimorphism and hence an isomorphism as $Y$
  is indecomposable. This contradicts the assumption that $X$ and $Y$
  are not isomorphic. The second assertion now follows as $\Aut_A(X)$
  is the set of units in the local ring $\End_A(X)$.
\end{proof}

\begin{prp}
  \label{prp:End}
  Let $N_1,\ldots,N_s$ be pairwise non-isomorphic $A$-modules with
  local endomorphism rings.  An endomorphism
  \begin{displaymath}
    \alpha = (\alpha_{ij}) \in
    \End_A(N_1\oplus\cdots\oplus N_s)
    \quad \text{with} \quad
    \alpha_{ij} \in
    \Hom_A(N_j,N_i)
  \end{displaymath}
  is an automorphism if and only if
  $\alpha_{11},\alpha_{22},\ldots,\alpha_{ss}$ are automorphisms.

  Furthermore, every $\alpha$ in $\Aut_A(N)$ can be written as a
  product of automorphisms of the form $d_i(\cdot)$ and
  $e_{ij}(\cdot)$, cf.~\dfnref{row-operations}.
\end{prp}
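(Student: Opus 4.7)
The plan is to prove both parts of the proposition simultaneously by inducting on $s$, with the ``if'' direction emerging from an explicit Gauss-elimination-type factorization.

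I would first handle the ``only if'' direction. Write $\beta = \alpha^{-1} = (\beta_{ij})$ and read off from $\alpha\beta = 1$ the identity $1_{N_i} = \alpha_{ii}\beta_{ii} + \sum_{k\neq i} \alpha_{ik}\beta_{ki}$ in $\End_A(N_i)$. Each summand $\alpha_{ik}\beta_{ki}$ with $k\neq i$ factors through $N_k$, and $N_k \not\cong N_i$ by hypothesis. Applying \lemref{factors} once for each $k\neq i$ (peeling off one summand at a time from the automorphism $1_{N_i}$), I conclude that $\alpha_{ii}\beta_{ii} \in \Aut_A(N_i)$. Since $\End_A(N_i)$ is local, its non-units form a two-sided ideal, so the product $\alpha_{ii}\beta_{ii}$ being a unit forces both factors to be units; hence $\alpha_{ii}\in\Aut_A(N_i)$.

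For the ``if'' direction together with the factorization, I would induct on $s$. For $s=1$ there is nothing to prove. For the inductive step, assume every $\alpha_{ii}$ is an automorphism. I apply left-multiplication by $e_{i1}(-\alpha_{i1}\alpha_{11}^{-1})$ for $i=2,\ldots,s$ to clear the first column below the $(1,1)$ entry, then right-multiplication by $e_{1j}(-\alpha_{11}^{-1}\alpha_{1j})$ for $j=2,\ldots,s$ to clear the first row to the right of the $(1,1)$ entry. This yields a block-diagonal form
\begin{displaymath}
  L\,\alpha\, R \;=\; \begin{pmatrix} \alpha_{11} & 0 \\ 0 & \alpha'' \end{pmatrix},
\end{displaymath}
where $L$ is a product of $e_{i1}(\cdot)$'s, $R$ is a product of $e_{1j}(\cdot)$'s, and $\alpha''$ has entries $\alpha''_{ij} = \alpha_{ij} - \alpha_{i1}\alpha_{11}^{-1}\alpha_{1j}$ for $i,j\geqslant 2$. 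The key observation is that for each $i \geqslant 2$ the correction term $\alpha_{i1}\alpha_{11}^{-1}\alpha_{1i}$ factors through $N_1$, which is not isomorphic to $N_i$, so \lemref{factors} gives $\alpha''_{ii}\in\Aut_A(N_i)$. By the induction hypothesis applied to $N_2\oplus\cdots\oplus N_s$, the matrix $\alpha''$ is an automorphism and factors as a product of $d_i(\cdot)$'s and $e_{ij}(\cdot)$'s (for $i,j\geqslant 2$). Writing
\begin{displaymath}
  \alpha \;=\; L^{-1}\, d_1(\alpha_{11})\,\begin{pmatrix} 1_{N_1} & 0 \\ 0 & \alpha'' \end{pmatrix}\, R^{-1},
\end{displaymath}
and noting that $L^{-1}$ and $R^{-1}$ are themselves products of elementary matrices of the required form, I obtain both the invertibility of $\alpha$ and the desired factorization.

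The main obstacle is bookkeeping in the elimination step: I must check that clearing the first column does not destroy the invertibility of the remaining diagonal entries, and this is precisely where \lemref{factors} enters to show $\alpha''_{ii}\in\Aut_A(N_i)$. Aside from that, all ingredients are elementary matrix manipulations combined with the local-ring fact that non-units form a two-sided ideal.
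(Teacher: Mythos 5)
Your proposal is correct and follows essentially the same route as the paper's proof: both directions use \lemref{factors} in combination with the locality of $\End_A(N_i)$, and the ``if'' direction plus factorization is established by the same Gauss-elimination induction with left/right multiplication by $e_{i1}(\cdot)$ and $e_{1j}(\cdot)$. The only cosmetic difference is in the ``only if'' step, where you peel off the off-diagonal terms from $1_{N_i}=\sum_j\alpha_{ij}\beta_{ji}$ one at a time via \lemref{factors}, whereas the paper observes directly that in a local ring at least one summand must be a unit and then invokes \lemref{factors} to rule out the $j\neq i$ terms; both reach $\alpha_{ii}\beta_{ii}\in\Aut_A(N_i)$ and then conclude by locality.
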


\begin{proof}
  ``Only if'': Assume that \mbox{$\alpha = (\alpha_{ij})$} is an
  automorphism with inverse \mbox{$\beta = (\beta_{ij})$} and let $i
  =1,\ldots,s$ be given. In the local ring $\End_A(N_i)$ one
  has \smash{\mbox{$1_{N_i}=\sum_{j=1}^s\alpha_{ij}\beta_{ji}$}}, and
  hence one of the terms $\alpha_{ij}\beta_{ji}$ must be an
  automorphism. As $\alpha_{ij}\beta_{ji}$ is not an automorphism for
  $j \neq i$, see \lemref{factors}, it follows that
  $\alpha_{ii}\beta_{ii}$ is an automorphism. \new{In particular, $\alpha_{ii}$ has a right inverse and $\beta_{ii}$ has a left inverse, and since the ring $\End_A(N_i)$ is local this means that $\alpha_{ii}$ and $\beta_{ii}$ are both automorphisms.} 

  ``If'': By induction on \mbox{$s\geqslant 1$}. The assertion is trivial for
  \mbox{$s=1$}. Now let \mbox{$s>1$}. Assume that
  $\alpha_{11},\alpha_{22},\ldots,\alpha_{ss}$ are automorphisms.
  Recall the notation from \dfnref[]{row-operations}.  By composing
  $\alpha$ with $e_{s1}(-\alpha_{s1}\alpha_{11}^{-1}) \cdots
  e_{31}(-\alpha_{31}\alpha_{11}^{-1})
  e_{21}(-\alpha_{21}\alpha_{11}^{-1})$ from the left and with
  $e_{12}(-\alpha_{11}^{-1}\alpha_{12})
  e_{13}(-\alpha_{11}^{-1}\alpha_{13}) \cdots
  e_{1s}(-\alpha_{11}^{-1}\alpha_{1s})$ from the right, one gets an
  endomorphism of the form
  \begin{displaymath}
    \alpha' = 
    \left(\!
      \begin{array}{c|c}
        \alpha_{11} & 0 \\
        \hline
        0 & \beta
      \end{array}
      \!\right)
      = d_1(\alpha_{11})
    \left(\!
      \begin{array}{c|c}
        1_{N_1} & 0 \\
        \hline
        0 & \beta
      \end{array}
      \!\right),
  \end{displaymath}
  where \mbox{$\beta \in \End_A(N_2 \oplus \cdots \oplus N_s)$} is an
  \mbox{$(s-1)\times(s-1)$} matrix with diagonal entries given by
  \mbox{$\alpha_{jj}-\alpha_{j1}\alpha_{11}^{-1}\alpha_{1j}$} for
  \mbox{$j=2,\ldots,s$}. By applying \lemref{factors} to the
  si\-tu\-a\-tion
  \mbox{$\varphi=\alpha_{jj}-\alpha_{j1}\alpha_{11}^{-1}\alpha_{1j}$}
  and \mbox{$\psi=\alpha_{j1}\alpha_{11}^{-1}\alpha_{1j}$}, it follows
  that the diagonal entries in $\beta$ are all automorphisms. By the
  induction hypothesis, $\beta$ is now an automorphism and can be
  written as a product of automorphisms of the form
  \smash{$d_i(\cdot)$} and \smash{$e_{ij}(\cdot)$}. Consequently, the
  same is true for \smash{$\alpha'$}, and hence also for $\alpha$.
\end{proof}

\begin{cor}
  \label{cor:Aut-ab}
  Assume that \mbox{$2 \in A$} is a unit and let $N_1,\ldots,N_s$ be
  pairwise non-isomorphic $A$-modules with local endomorphism rings.
  The homomorphism,
  \begin{displaymath}
    \Delta \colon \Aut_A(N_1) \times \cdots \times \Aut_A(N_s) 
    \longrightarrow \Aut_A(N_1 \oplus \cdots \oplus N_s)\;,
  \end{displaymath}
  given by $\Delta(\varphi_1,\ldots,\varphi_s) =
  d_1(\varphi_1)\cdots d_s(\varphi_s)$, induces a surjective homomorphism,
  \begin{displaymath}
    \Delta_\mathrm{ab} \colon \Aut_A(N_1)_\mathrm{ab} \oplus \cdots 
    \oplus \Aut_A(N_s)_\mathrm{ab}
    \longrightarrow \Aut_A(N_1 \oplus \cdots \oplus N_s)_\mathrm{ab}\;.
  \end{displaymath}
\end{cor}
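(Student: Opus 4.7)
The plan is to verify three things in sequence: first, that $\Delta$ is a group homomorphism (so that the assertion about an induced map on abelianizations makes sense); second, that $\Delta$ factors through the abelianization of its source; and third, that the induced map $\Delta_\mathrm{ab}$ is surjective. All the real work has already been carried out in \prpref{End} and \lemref{e-commutator}, so each step should be brief.

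First I would observe that each $d_i\colon\Aut_A(N_i)\to\Aut_A(N)$ is itself a group homomorphism, and that for $i\neq j$ the images of $d_i$ and $d_j$ commute elementwise, since the two block-diagonal automorphisms act as the identity on each other's supports. Combining these two facts with a direct computation gives $\Delta((\varphi_i\psi_i)_i)=\Delta((\varphi_i)_i)\Delta((\psi_i)_i)$, so $\Delta$ is indeed a homomorphism. Composing $\Delta$ with the canonical surjection $\Aut_A(N)\twoheadrightarrow\Aut_A(N)_\mathrm{ab}$ produces a homomorphism from a finite direct product of groups to an abelian group; by the universal property of abelianization (and the fact that abelianization of a finite direct product is the direct sum of abelianizations) this factors uniquely through $\Aut_A(N_1)_\mathrm{ab}\oplus\cdots\oplus\Aut_A(N_s)_\mathrm{ab}$, yielding $\Delta_\mathrm{ab}$.

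For surjectivity, let $\alpha\in\Aut_A(N)$ be arbitrary. By the second assertion of \prpref{End}, $\alpha$ can be written as a product of factors each of the form $d_i(\varphi)$ or $e_{ij}(\mu)$. The hypothesis that $2\in A$ is a unit enters precisely here: by \lemref{e-commutator}, every $e_{ij}(\mu)$ is a commutator in $\Aut_A(N)$, hence its image in $\Aut_A(N)_\mathrm{ab}$ is trivial. Consequently the class of $\alpha$ in $\Aut_A(N)_\mathrm{ab}$ coincides with the class of a product of $d_i(\cdot)$'s, and this class plainly lies in the image of $\Delta_\mathrm{ab}$. There is no genuine obstacle here: the corollary is essentially a repackaging of \prpref{End} together with \lemref{e-commutator}, and the hypothesis on $2$ is used only through the latter.
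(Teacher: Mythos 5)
Your argument is correct and follows the same route as the paper: the decomposition from \prpref{End} and the commutator observation from \lemref{e-commutator} are exactly the paper's two ingredients. The only difference is that you spell out why $\Delta$ is a homomorphism and why it factors through the direct sum of abelianizations, which the paper takes as given in the statement of the corollary.
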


\begin{proof}
  By \prpref{End} every element in $\Aut_A(N_1 \oplus \cdots \oplus
  N_s)$ is a product of automorphisms of the form $d_i(\cdot)$ and
  $e_{ij}(\cdot)$. As \mbox{$2 \in A$} is a unit,
  \lemref{e-commutator} yields that every element of the form
  $e_{ij}(\cdot)$ is a commutator; thus in $\Aut_A(N_1 \oplus \cdots
  \oplus N_s)_\mathrm{ab}$ every element is a product of elements of
  the form $d_i(\cdot)$, so $\Delta_\mathrm{ab}$ is surjective.
\end{proof}

\new{As noted above, \lemref{e-commutator}, and consequently also \corref{Aut-ab}, holds without the assumption that \mbox{$2 \in A$} is a unit provided that $s \geqslant 3$.}

In the following, we write $[\,\cdot\,]_\m \colon R \twoheadrightarrow R/\m=k$ for the quotient homomorphism.

\begin{prp}
  \label{prp:Aut-ab-exa}
  Let $(R,\m,k)$ be any commutative local ring such that \mbox{$2 \in
    R$} is a unit. Assume that $\m$ is not isomorphic to $R$ and that
  the endomorphism ring $\End_R(\m)$ is commutative and local. There is an
  isomorphism of abelian groups,
  \begin{displaymath}
    \delta \colon \Aut_R(R \oplus \m)_\mathrm{ab}
    \stackrel{\cong}{\longrightarrow}
    k^* \oplus \Aut_R(\m)\;,
  \end{displaymath}
  given by
  \begin{displaymath}
    \begin{pmatrix}
      \alpha_{11} & \alpha_{22} \\
      \alpha_{21} & \alpha_{22}
    \end{pmatrix}
    \longmapsto
    \big([\alpha_{11}(1)]_\m,\alpha_{11}\alpha_{22}-\alpha_{21}\alpha_{12}\big)\;.
   \end{displaymath}
\end{prp}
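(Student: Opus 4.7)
My plan is to define $\delta$ as a group homomorphism on $\Aut_R(R \oplus \m)$ via the stated formula, show it factors through the abelianization, and then verify that the induced map is an isomorphism.

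First I verify well-definedness. The first component $[\alpha_{11}(1)]_\m$ lies in $k^*$ because $\alpha_{11} \in R^*$ by \prpref{End}. For the second component, $\alpha_{21}\alpha_{12}$ factors through $R$, so $\m\not\cong R$ combined with \lemref{factors} puts this endomorphism in the radical of the local ring $\End_R(\m)$; since $\alpha_{11}\alpha_{22} \in \Aut_R(\m)$, the difference is a unit. A crucial ancillary fact is that $\phi(\m) \subseteq \m$ for every $\phi \in \Hom_R(\m, R)$: otherwise $\phi(m) = u \in R^*$ would split the composition $R \xrightarrow{\cdot m} \m \xrightarrow{u^{-1}\phi} R$ as the identity, producing an $R$-summand of the indecomposable module $\m$ and forcing $\m \cong R$.

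Next I show $\delta$ is a group homomorphism. The first component is multiplicative because the cross term $\alpha_{12}(\beta_{21}(1))$ lies in $\m$ by the ancillary fact. For the second component I expand the ``determinant'' $D(\alpha\beta) = (\alpha\beta)_{11}(\alpha\beta)_{22} - (\alpha\beta)_{21}(\alpha\beta)_{12}$ inside the commutative ring $\End_R(\m)$ and compare with $D(\alpha)D(\beta)$; multiplicativity hinges on the identity $x\phi(y) = y\phi(x)$ for $x, y \in \m$ and $\phi \in \Hom_R(\m, R)$, which follows from $\phi(xy) = x\phi(y) = y\phi(x)$ by $R$-linearity. Since $k^* \oplus \Aut_R(\m)$ is abelian, $\delta$ descends to $\Aut_R(R \oplus \m)_{\mathrm{ab}}$; surjectivity is immediate, because given $(\bar u, v)$ one lifts $\bar u$ to $u \in R^*$ and computes $\delta(\overline{d_1(u) d_2(u^{-1}v)}) = (\bar u, v)$.

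The main obstacle is injectivity. By \corref{Aut-ab}, every class in $\Aut_R(R\oplus\m)_{\mathrm{ab}}$ has the form $\overline{d_1(u) d_2(v)}$; if its $\delta$-image is trivial then $u \in 1+\m$ and $v = u^{-1}$, so the whole question reduces to showing that $d_1(1+m) d_2((1+m)^{-1})$ is a product of commutators for every $m \in \m$. Using the henselian hypothesis of \stpref{assumptions} together with $2 \in R^*$, Hensel's lemma applied to the polynomial $x^2 + x - m$ yields $n \in \m$ with $n + n^2 = m$. A direct matrix calculation shows that the commutator $[e_{12}(\iota), e_{21}(\phi_n)]$, where $\iota: \m \hookrightarrow R$ is the inclusion and $\phi_n: R \to \m$ is multiplication by $n$, equals the matrix with upper-left entry $1+m$ and lower-right entry $1 - n \cdot 1_\m$. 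Applying the Gaussian decomposition proved inside \prpref{End} to this matrix exhibits it as $e_{21}(\cdot) \cdot d_1(1+m) d_2((1+m)^{-1}) \cdot e_{12}(\cdot)$; rearranging and invoking \lemref{e-commutator} (which requires $2 \in R^*$) to realize each $e_{ij}$ as a commutator expresses $d_1(1+m) d_2((1+m)^{-1})$ itself as a product of commutators, yielding injectivity and completing the proof.
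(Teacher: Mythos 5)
Your proof follows the paper's structure closely up through surjectivity — the ancillary fact that every $\phi\in\Hom_R(\m,R)$ lands in $\m$, the reduction to showing the ``determinant'' into $\End_R(\m)$ is multiplicative, and the reduction via \corref{Aut-ab} to diagonal matrices are all the same. (Note though that your aside that multiplicativity ``hinges on $x\phi(y)=y\phi(x)$'' understates what is used: the general commutativity of $\End_R(\m)$, which is a standing hypothesis, is what makes the $2\times 2$ determinant multiplicative; the paper simply observes that co-restriction gives a ring map into $\mathrm{M}_2(\End_R(\m))$ and invokes that directly.)

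The genuine gap is in the injectivity step. You invoke ``the henselian hypothesis of \stpref{assumptions}'' to apply Hensel's lemma to $x^2+x-m$, but \prpref{Aut-ab-exa} is stated for \emph{any} commutative local ring with $2\in R^*$, $\m\not\cong R$, and $\End_R(\m)$ local and commutative — none of the hypotheses of \stpref[]{assumptions} are assumed there, so henselianity is simply not available. For instance $R=\mathbb{Z}_{(5)}$ is a local ring with $2$ a unit in which $x^2+x-5$ has no root, so your construction of $n$ with $n+n^2=m$ can fail. The workaround is both simpler and hypothesis-free: after reducing to the diagonal matrix $d_1(r)d_2(r^{-1}1_\m)$ with $r\in 1+\m$, apply the Whitehead-lemma identity to write it as a product of four matrices of the form $e_{ij}(\cdot)$ — this works because $r-1\in\m$ and $r^{-1}-1\in\m$ give honest maps $R\to\m$, while $\iota$ and $-r^{-1}\iota$ are maps $\m\to R$. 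Then \lemref{e-commutator} (which only needs $2\in R^*$) shows each $e_{ij}$ is a commutator, and you are done. Your own argument already ends by Gaussian-decomposing your commutator and appealing to \lemref{e-commutator} to absorb the off-diagonal factors, so the detour through Hensel's lemma gains nothing even when it is available.
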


\begin{proof}
  First note that the image of any homomorphism \mbox{$\alpha \colon
    \m \to R$} is contained~in~$\m$. Indeed if \mbox{$\Im \alpha
    \nsubseteq \m$}, then \mbox{$u=\alpha(a)$} is a unit for some
  \mbox{$a \in \m$}, and thus \mbox{$\alpha(u^{-1}a) = 1$}. It follows
  that $\alpha$ is surjective, and hence a split epimorphism as $R$ is
  free.  Since $\m$ is indecomposable, $\alpha$ must be an
  isomorphism, which is a contradiction.

  Therefore, given an endomorphism,
  \begin{displaymath}
    \begin{pmatrix}
      \alpha_{11} & \alpha_{12} \\
      \alpha_{21} & \alpha_{22}
    \end{pmatrix}
    \in \End_R(R \oplus \m) = 
    \begin{pmatrix}
      \Hom_R(R,R) & \Hom_R(\m,R) \\
      \Hom_R(R,\m) & \Hom_R(\m,\m) \\
    \end{pmatrix},
  \end{displaymath} 
  we may by (co)restriction view the entries $\alpha_{ij}$ as elements
  in the endomorphism ring $\End_R(\m)$. As this ring is assumed to be
  commutative, the determinant map
  \begin{displaymath}
    \End_R(R \oplus \m) \longrightarrow \End_R(\m)
    \qquad \text{given by} \qquad
    (\alpha_{ij}) \longmapsto
    \alpha_{11}\alpha_{22}-\alpha_{21}\alpha_{12}
  \end{displaymath}
  preserves multiplication.  If \mbox{$(\alpha_{ij}) \in \Aut_R(R
    \oplus \m)$}, then \prpref{End} implies that \mbox{$\alpha_{11}
    \in \Aut_R(R)$} and \mbox{$\alpha_{22} \in \Aut_R(\m)$}, and thus
  \mbox{$\alpha_{11}\alpha_{22} \in \Aut_R(\m)$}.  By applying
  \lemref{factors} to
  \mbox{$\varphi=\alpha_{11}\alpha_{22}-\alpha_{21}\alpha_{12}$} and
  \mbox{$\psi=\alpha_{21}\alpha_{12}$} we get \mbox{$\varphi \in
    \Aut_R(\m)$}, and hence the determinant map is a group
  homomorphism $\Aut_R(R \oplus \m) \to \Aut_R(\m)$.

  The map \mbox{$\Aut_R(R \oplus \m) \to k^*$} defined by
  \mbox{$(\alpha_{ij}) \mapsto [\alpha_{11}(1)]_\m$} is also a group
  homomorphism. Indeed, entry $(1,1)$ in the product
  $(\alpha_{ij})(\beta_{ij})$ is
  \mbox{$\alpha_{11}\beta_{11}+\alpha_{12}\beta_{21}$}. Here
  $\alpha_{12}$ is a homomorphism \mbox{$\m \to R$}, and hence
  $\alpha_{12}\beta_{21}(1) \in \m$ by the arguments in the beginning
  of the proof. Consequently one has
  \begin{displaymath}
    [(\alpha_{11}\beta_{11}+\alpha_{12}\beta_{21})(1)]_\m =
    [(\alpha_{11}\beta_{11})(1)]_\m = [\alpha_{11}(1)\beta_{11}(1)]_\m =
    [\alpha_{11}(1)]_\m[\beta_{11}(1)]_\m\;.
  \end{displaymath}

  These arguments and the fact that the groups $k^*$ and $\Aut_R(\m)$
  are abelian show that the map $\delta$ described in the proposition
  is a well-defined group homomorphism. Evidently, $\delta$ is
  surjective; indeed, for $[r]_\m \in k^*$ and $\varphi \in
  \Aut_R(\m)$ one has
  \begin{displaymath}
    \delta 
    \begin{pmatrix}
      r1_R & 0 \\
      0 & r^{-1}\varphi
    \end{pmatrix}
    = ([r]_\m,\varphi)\;.
  \end{displaymath}
  To show that $\delta$ is injective, assume that $\alpha \in \Aut_R(R
  \oplus \m)_\mathrm{ab}$ with $\delta(\alpha) = ([1]_\m,1_\m)$.  By
  \corref{Aut-ab} we can assume that $\alpha = (\alpha_{ij})$ is a
  diagonal matrix. We write $\alpha_{11} = r1_R$ for some unit $r \in
  R$. Since one has $\delta(\alpha) = ([r]_\m,r\alpha_{22})$ we
  conclude that $r \in 1+\m$ and $\alpha_{22} = r^{-1}1_\m$, that is,
  $\alpha$ has the form
  \begin{displaymath}
    \alpha = 
    \begin{pmatrix}
      r1_R & 0 \\
      0 & r^{-1}1_{\m}
    \end{pmatrix} 
    \qquad \text{with} \qquad r \in 1+\m\;.
  \end{displaymath}
  Thus, proving injectivity of $\delta$ amounts to showing that every
  automorphism $\alpha$ of the form above belongs to the commutator
  subgroup of $\Aut_R(R \oplus \m)$. As \mbox{$r-1 \in \m$} the map
  $(r-1)1_R$ gives a homomorphism \mbox{$R \to \m$}.  Since
  \mbox{$r(r^{-1}-1) = 1-r \in \m$} and $r \notin \m$, it follows that
  \mbox{$r^{-1}-1 \in \m$}.  Thus $(r^{-1}-1)1_R$ gives another
  homomorphism \mbox{$R \to \m$}.  If \mbox{$\iota \colon \m
    \hookrightarrow R$} denotes the inclusion, then one has\footnote{\
    The identity comes from the standard proof of Whitehead's lemma;
    see e.g.~\lemcite[(1.4)]{MR1382659}.}
  \begin{align*}
    \begin{pmatrix}
      r1_R & 0 \\
      0 & r^{-1}1_{\m}
    \end{pmatrix}
    &=
    \begin{pmatrix}
      1_R & 0 \\
      (r^{-1}-1)1_R & 1_\m
    \end{pmatrix}
    \!
    \begin{pmatrix}
      1_R & \iota \\
      0 & 1_\m
    \end{pmatrix}
    \!
    \begin{pmatrix}
      1_R & 0 \\
      (r-1)1_R & 1_\m
    \end{pmatrix}
    \!
    \begin{pmatrix}
      1_R & -r^{-1}\iota \\
      0 & 1_\m
    \end{pmatrix}.
  \end{align*}
  The right-hand of this equality is a product of matrices of the form
  $e_{ij}(\cdot)$, and since $2 \in R$ is a unit the desired
  conclusion now follows from \lemref{e-commutator}.
\end{proof}

\section{Examples} \label{sec:examples}

We begin with a trivial example.

\begin{exa}
  \label{exa:trivial}
  If $R$ is regular, then there are isomorphisms,
  \begin{displaymath}
    \K_1(\mod\,R) \cong \K_1(\proj\,R) \cong \K_1^{\mathrm{C}}(R) \cong R^*\,.
  \end{displaymath}
  The first isomorphism is by Quillen's resolution theorem \cite[\S4
  thm.~3]{MR0338129}, the second one is mentioned in \resref{Quillen},
  and the third one is well-known; see
  e.g.~\cite[exa.~(1.6)]{MR1382659}. \thmref{1} confirms this result,
  indeed, as $M=R$ is a representation generator for $\MCM\,R =
  \proj\,R$ one has $\Aut_R(M)_\mathrm{ab} = R^*$. As there are no
  Auslander--Reiten sequences in this case, the subgroup $\Xi$ is
  generated by the empty set, so $\Xi=0$.
\end{exa}

We now illustrate how \thmref{1} applies to compute $\K_1(\mod\,R)$
for the ring \mbox{$R=k[X]/(X^2)$}.  The answer is well-known to be
$k^*$, indeed, for any commutative artinian local ring $R$ with
residue field $k$ one has \mbox{$\K_1(\mod\,R) \cong k^*$} by
\cite[\S5 cor.~1]{MR0338129}.

\begin{exa}
  \label{exa:dual-numbers}
  \textsl{Let \mbox{$R=k[X]/(X^2)$} be the ring of dual numbers over a
    field $k$ with \mbox{$\operatorname{char}(k) \neq 2$}. Denote by
    \,\mbox{$\mathrm{inc} \colon \proj\,R \to \mod\,R$} the inclusion
    functor. The ho\-mo\-morphism $\K_1(\mathrm{inc})$ may be identified
    with the map,
  \begin{displaymath}
    \mu \colon R^* \longrightarrow k^*
    \qquad \text{given by} \qquad
    a+bX \longmapsto a^2\;.
  \end{displaymath}}
\end{exa}

\begin{proof}
  The maximal ideal \mbox{$\m=(X)$} is the only
  non-free indecomposable maximal Cohen--Macaulay $R$-module, so $M=R \oplus \m$
  is a representation generator for $\MCM\,R$; see \eqref{M}. There is an isomorphism
  \mbox{$k \to \End_R(\m)$} of $R$-algebras given by \mbox{$a \mapsto
    a1_\m$}, in particular, $\End_R(\m)$ is commutative.  Via this
  isomorphism, $k^*$ corresponds to $\Aut_R(\m)$. The
  Auslander--Reiten sequence ending in $\m$ is
  \begin{displaymath}
      0 \longrightarrow \m \stackrel{\iota}{\longrightarrow} R
      \stackrel{X}{\longrightarrow} \m \longrightarrow 0\;,
  \end{displaymath}
  where $\iota$ is the inclusion. The Auslander--Reiten homomorphism
  \smash{$\Upsilon = \binom{-1}{2} \colon \mathbb{Z} \to
    \mathbb{Z}^2$} is injective, so \thmref{1} can be applied.  Note
  that for every $a1_\m \in \Aut_R(\m)$, where $a \in k^*$, there is a
  commutative diagram,
  \begin{displaymath}
    \xymatrix{
      0 \ar[r] & \m \ar[d]^-{a1_\m}_-{\cong} \ar[r]^-{\iota} & R
      \ar[d]^-{a1_R}_-{\cong} 
      \ar[r]^-{X} & 
      \m \ar[d]^-{a1_\m}_-{\cong} \ar[r] & 0\;\phantom{.}
      \\
      0 \ar[r] & \m \ar[r]^-{\iota} &
      R \ar[r]^-{X} & \m \ar[r] & 0\;. 
    }
  \end{displaymath}
  Applying the tilde construction \resref{tilde} to the automorphisms
  $a1_\m$ and $a1_R$ one gets
  \begin{displaymath}
    \widetilde{a1_\m} = 
    \begin{pmatrix}
      1_R & 0 \\
      0 & a1_\m
    \end{pmatrix}
    \qquad \text{and} \qquad
    \widetilde{a1_R} = 
    \begin{pmatrix}
      a1_R & 0 \\
      0 & 1_\m
    \end{pmatrix};    
  \end{displaymath}
  see \exaref{diag}. In view of \dfnref{Xi} and
  \rmkref{remark-to-definition-Xi}, the subgroup $\Xi$ of $\Aut_R(R
  \oplus \m)_\mathrm{ab}$ is therefore generated by all elements of
  the form
  \begin{displaymath}
    \xi_a := 
    (\widetilde{a1_\m})(\widetilde{a1_R})^{-1}(\widetilde{a1_\m}) = 
    \begin{pmatrix}
      a^{-1}1_R & 0 \\
      0 & a^21_\m      
    \end{pmatrix}
    \qquad \text{where} \qquad
    a \in k^*\;.
  \end{displaymath}
  Denote by $\omega$ the composite of the isomorphisms,
  \begin{displaymath}
    \xymatrix{
    \Aut_R(R \oplus \m)_\mathrm{ab} \ar[r]^-{\delta}_-{\cong} &
    k^* \oplus \Aut_R(\m) \ar[r]_-{\cong} & k^* \oplus k^*\;,
    }
  \end{displaymath}
  where $\delta$ is the isomorphism from \prpref{Aut-ab-exa}. As
  $\omega(\xi_a) = (a^{-1},a)$ we get that $\omega(\Xi) = \{(a^{-1},a)
  \,|\, a \in k^*\}$ and thus $\omega$ induces the first group
  isomorphism below,
  \begin{displaymath}
    \xymatrix@C=2.4pc{
    \Aut_R(R \oplus \m)_\mathrm{ab}/\Xi \mspace{1mu}
    \ar[r]^-{\overline{\omega}}_-{\cong} & 
    \mspace{1mu} (k^* \oplus k^*)/\omega(\Xi) \mspace{1mu}
    \ar[r]^-{\chi}_-{\cong} & \mspace{1mu} k^*\;;
    }
  \end{displaymath}
  the second isomorphism is induced by the surjective homomorphism
  \mbox{$k^* \oplus k^* \to k^*$}, given by \mbox{$(b,a) \mapsto ba$},
  whose kernel is exactly $\omega(\Xi)$. In view of \thmref{1} and the
  isomorphisms $\overline{\omega}$ and $\chi$ above, it follows that
  $\K_1(\mod\,R) \cong k^*$.

  \thmref{1} asserts that $\K_1(\mathrm{inc})$ may be identified with
  the homomorphism
  \begin{displaymath}
    \lambda \colon R^* \longrightarrow \Aut_R(R \oplus \m)_{\mathrm{ab}}/\Xi
    \qquad \text{given by} \qquad
    r \longmapsto
    \begin{pmatrix}
      r1_R & 0 \\
      0 & 1_\m
    \end{pmatrix}.
  \end{displaymath}
  It remains to note that the isomorphism $\chi\overline{\omega}$ identifies
  $\lambda$ with the homomorphism~$\mu$ described in the example,
  indeed, one has $\chi\overline{\omega}\lambda = \mu$.
\end{proof}

\exaref{dual-numbers} shows that for \mbox{$R=k[X]/(X^2)$} the
canonical homomorphism,
\begin{displaymath}
  \xymatrix@C=3.5pc{
  R^* \cong \K_1(\proj\,R)
  \ar[r]^-{\K_1(\mathrm{inc})} & \K_1(\mod\,R) \cong k^*
  },
\end{displaymath}
is not an isomorphism. It turns out that if $k$ is algebraically
closed with characteristic zero, then there exists a non-canonical
isomorphism between $R^*$ and $k^*$.

\begin{prp}
  \label{prp:char}
  Let \mbox{$R=k[X]/(X^2)$} where $k$ is an algebraically closed field
  with characteristic $p \geqslant 0$. The following assertions hold.
  \begin{prt}
  \item If $p>0$, then the groups $R^*$ and $k^*$ are not isomorphic.
  \item If $p=0$, then there exists a (non-canonical) group isomorphism
    $R^* \cong k^*$.
  \end{prt}
\end{prp}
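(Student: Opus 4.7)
The plan is to first reduce both parts to a structural decomposition of $R^*$. Setting $\m = (X) \subset R$, the quotient homomorphism $R^* \twoheadrightarrow k^*$, $a+bX \mapsto a$, is split by the inclusion $k^* \hookrightarrow R^*$; and since $X^2 = 0$, the identity $(1+bX)(1+cX) = 1+(b+c)X$ shows that $1+\m \cong (k,+)$ via $1+bX \mapsto b$. This gives $R^* \cong k^* \times (k,+)$ as abelian groups, and both assertions become statements about comparing $k^*$ with $k^* \times (k,+)$.

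For part (a) I would argue by $p$-torsion. In characteristic $p>0$, the factor $(k,+)$ is a nonzero $\mathbb{F}_p$-vector space and so contributes elements of order $p$ to $R^*$, whereas $k^*$ has trivial $p$-torsion: the Frobenius identity $(a-1)^p = a^p - 1$ shows that $a^p = 1$ forces $a=1$. Hence no isomorphism $R^* \cong k^*$ can exist.

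For part (b) I plan to appeal to the structure theorem for divisible abelian groups. In characteristic $0$, $(k,+)$ is a $\mathbb{Q}$-vector space and $k^*$ is divisible (every $a \in k^*$ admits an $n$-th root in $k$ since $k$ is algebraically closed). Writing $k^* \cong \mu(k) \oplus V$, where $\mu(k)$ is the countable torsion subgroup of roots of unity and $V = k^*/\mu(k)$ is a $\mathbb{Q}$-vector space, reduces the desired isomorphism $R^* \cong k^*$ to the $\mathbb{Q}$-vector space identity $V \oplus (k,+) \cong V$.

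The only real obstacle is therefore a cardinality computation for $\dim_\mathbb{Q} V$ and $\dim_\mathbb{Q}(k,+)$, which I would split by the size of $k$. When $k$ is uncountable, both dimensions equal $|k|$, using that an uncountable $\mathbb{Q}$-vector space has dimension equal to its cardinality and that $|V| = |k^*| = |k|$ (since $|\mu(k)| \leq \aleph_0$). When $k$ is countable, fixing any embedding $k \hookrightarrow \mathbb{C}$ and invoking unique factorization in $\mathbb{Q}$ shows that the rational primes $2,3,5,\ldots$ remain $\mathbb{Q}$-linearly independent in $V$, giving $\dim_\mathbb{Q} V = \aleph_0 = \dim_\mathbb{Q}(k,+)$. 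In either case the common dimension is infinite, so $V \oplus (k,+) \cong V$ by cardinal arithmetic, and pulling this back through the decomposition yields a (non-canonical) isomorphism $R^* \cong k^*$.
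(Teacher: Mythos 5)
Your proof is correct, and it follows the paper's overall strategy: decompose $R^* \cong k^* \oplus k^+$ via $a+bX \mapsto (a,b/a)$, then compare. The one genuinely different step is in part (a). The paper proves the stronger statement that \emph{no} group homomorphism $k^* \to k^* \oplus k^+$ is surjective: since $k$ is algebraically closed every $x\in k^*$ is a $p$-th power, and by Frobenius the $k^+$-component of any homomorphism is killed by $p$, hence vanishes identically. You instead compare $p$-torsion subgroups directly: $k^*$ has none (again by Frobenius, $(a-1)^p = a^p - 1$), while $k^+\neq 0$ is all $p$-torsion, so $R^*$ has non-trivial $p$-torsion. Your version isolates an invariant that distinguishes the groups, which is arguably cleaner; the paper's version gives the slightly stronger non-surjectivity conclusion for free. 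In part (b) both arguments are essentially the same (split $k^*$ into its torsion subgroup plus a $\mathbb{Q}$-vector space and match $\mathbb{Q}$-dimensions, which is a cardinality count). You supply an explicit witness, the $\mathbb{Q}$-linear independence of the rational primes in $k^*/\mu(k)$, for the claim that the torsion-free part of $k^*$ is infinite-dimensional, which the paper leaves to the reader (``it is not hard to see that $|J|$ must be infinite''); the uncountable case is handled identically in both.
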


\begin{proof}
  There is a group isomorphism $R^* \to k^* \oplus k^+$ given by $a+bX
  \mapsto (a,b/a)$, where $k^+$ denotes the underlying abelian group
  of the field $k$.

  \proofof{(a)} Let \mbox{$\varphi=(\varphi_1,\varphi_2) \colon k^*
    \to k^* \oplus k^+$} be any group homomorphism.  As $k$ is
  algebraically closed, every element in $x \in k^*$ has the form $x =
  y^p$ for some $y \in k^*$. Therefore \mbox{$\varphi(x) = \varphi(y^p) =
    \varphi(y)^p = (\varphi_1(y),\varphi_2(y))^p =
    (\varphi_1(y)^p,p\varphi_2(y)) = (\varphi_1(x),0)$}, which shows
  that $\varphi$ is not surjective.

  \proofof{(b)} Since $p=0$ the abelian group $k^+$ is divisible and
  torsion free. Therefore \mbox{$k^+ \cong \mathbb{Q}^{(I)}$} for some
  index set $I$. There exist algebraic field extensions of
  $\mathbb{Q}$ of any finite degree, and these are all contained in
  the algebraically closed field $k$. Thus \mbox{$|I| =
    \dim_\mathbb{Q}k$} must be infinite, and it follows that $|I| =
  |k|$.

  The abelian group $k^*$ is also divisible, but it has torsion. Write
  $k^* \cong T \oplus (k^*/T)$, where \mbox{$T = \{x \in k^* \,|\,
    \exists\, n \in \mathbb{N} \colon x^n=1 \}$} is the torsion
  subgroup of $k^*$. For the divisible torsion free abelian group
  $k^*/T$ one has \mbox{$k^*/T \cong \mathbb{Q}^{(J)}$} for some index
  set $J$. It is not hard to see that $|J|$ must be infinite, and
  hence $|J| = |k^*/T|$. As $|T| = \aleph_0$ it follows that $|k| =
  |k^*| = \aleph_0 + |J| = |J|$.

  Since $|J|=|k|=|I|$ one gets \mbox{$k^* \cong T \oplus
  \mathbb{Q}^{(J)} \cong T \oplus \mathbb{Q}^{(J)} \oplus
  \mathbb{Q}^{(I)} \cong k^* \oplus k^+$}.
\end{proof}

The artinian ring \mbox{$R=k[X]/(X^2)$} from \exaref{dual-numbers} has
length $\ell=2$ and this power is also involved in the description of
the homomorphism $\mu = \K_1(\mathrm{inc})$.  The next result shows that this is no coincidence.
As \prpref{artinian} might be well-known to experts, and since we do not really need it, we do not give a proof. 

\begin{prp}
  \label{prp:artinian}
  Let $(R,\m,k)$ be a commutative artinian local ring of
  length~$\ell$. The group homomorphism \mbox{$R^* \cong
    \K_1(\proj\,R) \to \K_1(\mod\,R) \cong k^*$} induced by the
  inclusion \mbox{$\mathrm{inc} \colon \proj\,R \to \mod\,R$} is the
  composition of the homomorphisms,
  \begin{displaymath}
    \xymatrix{
      R^* \ar[r]^-{\pi} & k^* \ar[r]^-{(\cdot)^\ell} & k^*\;,
    }
  \end{displaymath}
  where \mbox{$\pi \colon R \twoheadrightarrow R/\m=k$} is the
  canonical quotient map and $(\cdot)^\ell$ is the $\ell$'th power.
\end{prp}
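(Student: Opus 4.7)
The plan is to compute inside Bass' $\K_1$. Both $\zeta_{\proj R}$ and $\zeta_{\mod R}$ are isomorphisms---the former by \thmref{Gersten-transformation-1} and the latter by \thmref{Gersten-transformation-2}, since $\mod R$ is a length category (as $R$ is artinian) with the unique simple object $k$ up to isomorphism---so one may identify $\K_1(\mathrm{inc})$ with $\K_1^{\mathrm{B}}(\mathrm{inc})$. As in \obsref{rho}, the isomorphism $R^* \cong \K_1^{\mathrm{B}}(\proj R)$ sends $r \mapsto [R, r\cdot 1_R]$; by a parallel devissage argument (cf.~the proof of \thmref{Gersten-transformation-2}), an isomorphism $k^* \cong \K_1^{\mathrm{B}}(\mod R)$ sends $a \mapsto [k, a\cdot 1_k]$. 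So it suffices to compute the image of $[R, r\cdot 1_R]$ in $\K_1^{\mathrm{B}}(\mod R)$.

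The key observation is that any composition series
$$0 = R_0 \subsetneq R_1 \subsetneq \cdots \subsetneq R_\ell = R$$
with $R_i/R_{i-1} \cong k$ is automatically stable under $r \cdot 1_R$, since each $R_i$ is an $R$-submodule. Thus $r \cdot 1_R$ restricts to an automorphism of each $R_i$ and descends to an automorphism of each quotient $R_i/R_{i-1} \cong k$; because the $R$-action on $k$ factors through $\pi$, this induced automorphism is multiplication by $\pi(r)$. The resulting diagrams are short exact sequences in the loop category $\Omega(\mod R)$, so additivity of $\K_1^{\mathrm{B}}$ (see \resref{K1B}) gives
$$[R_i, r\cdot 1_{R_i}] \;=\; [R_{i-1}, r\cdot 1_{R_{i-1}}] + [k, \pi(r)\cdot 1_k]$$
in $\K_1^{\mathrm{B}}(\mod R)$, and induction on $i$ (starting from $[R_0, 1_{R_0}] = 0$, cf.~\resref{neutral}) yields $[R, r\cdot 1_R] = \ell \cdot [k, \pi(r)\cdot 1_k]$.

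Under the identifications above this is precisely $\pi(r)^\ell \in k^*$, as required. The only step deserving care is the verification that the devissage isomorphism $k^* \cong \K_1^{\mathrm{B}}(\mod R)$ really does send $a$ to $[k, a \cdot 1_k]$; this follows from \resref{free} applied to the division ring $k$ together with the equivalence $\add k \simeq \proj k$ via $\Hom_R(k,-)$, which identifies the $R$-module $k$ with the free $k$-module of rank one and $a \cdot 1_k$ with the scalar matrix $(a) \in \GL_1(k)$. The additivity step using the composition series is then standard once one notes the (perhaps initially surprising) fact that every $R$-submodule of $R$ is automatically invariant under $r \cdot 1_R$, so no choice of filtration compatible with the automorphism need be made.
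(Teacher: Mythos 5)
The paper deliberately omits a proof of this proposition, remarking only that it ``might be well-known to experts'' and is not needed for the main results; there is therefore no paper proof to compare against. Your argument is correct and complete, and it is worth recording.

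The structure is sound: $\zeta_{\proj\,R}$ is an isomorphism by \thmref{Gersten-transformation-1}, and $\zeta_{\mod\,R}$ is an isomorphism by \thmref{Gersten-transformation-2} because $\mod\,R$ for artinian local $R$ is a length category with $k$ as its unique simple object, so the computation can be carried out in $\K_1^{\mathrm{B}}$. The identification \mbox{$R^* \cong \K_1^{\mathrm{B}}(\proj\,R)$}, \mbox{$r \mapsto [R,r1_R]$}, is \obsref{rho}; the identification \mbox{$k^* \cong \K_1^{\mathrm{B}}(\mod\,R)$}, \mbox{$a \mapsto [k,a1_k]$}, follows from Bass' devissage (as in the proof of \thmref{Gersten-transformation-2}) combined with the equivalence $\add_R\,k \simeq \proj\,k$ and \resref{free} for the field $k$. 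The central observation --- that every $R$-submodule $N \subseteq R$ is an ideal, hence satisfies $rN = N$ for a unit $r$, so that \emph{any} composition series $0 = R_0 \subsetneq \cdots \subsetneq R_\ell = R$ is automatically a filtration by objects of the loop category $\Omega(\mod\,R)$ compatible with $r1_R$ --- is exactly what makes the inductive telescoping
\begin{displaymath}
[R,r1_R] \;=\; \sum_{i=1}^{\ell}\,[R_i/R_{i-1},\,\overline{r1}] \;=\; \ell\cdot[k,\pi(r)1_k]
\end{displaymath}
work without any choice. Each step uses only the additivity relation in $\G(\Omega(\mod\,R))$ from \resref{K1B} and the neutrality of $[R_0,1_{R_0}]$ from \resref{neutral}, and the induced automorphism on each subquotient is $\pi(r)1_k$ because the $R$-action on a simple module factors through $R/\m = k$. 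Note that this direct devissage-plus-filtration argument is genuinely more elementary than invoking the machinery of \thmref{1}: it needs neither the Auslander--Reiten theory nor the semilocal-endomorphism-ring results, and it also recovers the special case \mbox{$R = k[X]/(X^2)$} worked out in \exaref{dual-numbers} (where \mbox{$\ell = 2$} and \mbox{$\pi(a+bX) = a$}) with far less effort than the paper's route through the tilde construction.
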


Our next example is a non-artinian ring, namely the simple curve
singularity of type ($A_2$) studied by e.g.~Herzog \cite[Satz~1.6]{Herzog} and
Yoshino \cite[prop.~(5.11)]{yos}.

\begin{exa}
  \label{exa:simple-curve-singularity}
  \textsl{Let \mbox{$R=k[\hspace*{-1.4pt}[T^2,T^3]\hspace*{-1.4pt}]$}
    where $k$ is an al\-ge\-bra\-i\-cal\-ly closed field with
    \mbox{$\operatorname{char}(k) \neq 2$}.  Denote by
    \,\mbox{$\mathrm{inc} \colon \proj\,R \to \mod\,R$}
    the inclusion functor.  The homomorphism $\K_1(\mathrm{inc})$ may
    be identified with the inclusion map,
    \begin{displaymath} 
      \mu \colon R^*=k[\hspace*{-1.4pt}[T^2,T^3]\hspace*{-1.4pt}]^* 
      \hookrightarrow
      k[\hspace*{-1.4pt}[T]\hspace*{-1.4pt}]^*\;.
    \end{displaymath}}
\end{exa}

\begin{proof}
  The maximal ideal $\m =
  (T^2,T^3)$ is the only non-free indecomposable maximal Cohen--Macaulay
  $R$-module, so $M=R \oplus \m$ is a representation generator for
  $\MCM\,R$; see~\eqref{M}. Even though $T$ is not an element in 
  $R=k[\hspace*{-1.4pt}[T^2,T^3]\hspace*{-1.4pt}]$, multiplication by
  $T$ is a well-defined endomorphism of $\m$. Thus there is a
  ring homomorphism,
  \begin{displaymath}
    \chi \colon k[\hspace*{-1.4pt}[T]\hspace*{-1.4pt}] \longrightarrow
    \End_R(\m) 
    \qquad \text{given by} \qquad h \longmapsto h1_\m\;.
  \end{displaymath}
  It is not hard to see that $\chi$ is injective. To prove that it is surjective, i.e.~that one has \mbox{$\End_R(\m)/k[\hspace*{-1.4pt}[T]\hspace*{-1.4pt}]=0$}, note that there is a short exact sequence of $R$-modules, 
  \begin{displaymath}
    0 \longrightarrow k[\hspace*{-1.4pt}[T]\hspace*{-1.4pt}]/R 
    \longrightarrow \End_R(\m)/R
    \longrightarrow \End_R(\m)/k[\hspace*{-1.4pt}[T]\hspace*{-1.4pt}]
    \longrightarrow 0\;.
  \end{displaymath}
  To see that \mbox{$\End_R(\m)/k[\hspace*{-1.4pt}[T]\hspace*{-1.4pt}]=0$}, it suffices to argue that the $R$-module $\End_R(\m)/R$ is simple. As noted in the beginning of the proof of \prpref{Aut-ab-exa}, the inclusion $\m \hookrightarrow R$ induces an isomorphism $\End_R(\m) \cong \Hom_R(\m,R)$,
so by applying $\Hom_R(-,R)$ to the short exact sequence $0 \to \m \to R \to k \to 0$, it follows that 
  \begin{displaymath}
     \End_R(\m)/R \cong \mathrm{Ext}_R^1(k,R)\;.
  \end{displaymath}
  The latter module is isomorphic to $k$ since $R$ is a $1$-dimensional Gorenstein ring.

  Note that via the isomorphism $\chi$, the group \mbox{$k[\hspace*{-1.4pt}[T]\hspace*{-1.4pt}]^*$} corresponds to $\Aut_R(\m)$.
  
  The Auslander--Reiten sequence ending in $\m$ is
  \begin{displaymath}
    \xymatrix@C=8ex{
      0 \ar[r] & \m \ar[r]^-{(1 \ \, -T)^t}
      & R \oplus \m \ar[r]^-{(T^2 \ \, T)} & \m \ar[r] & 0\;.
    }
  \end{displaymath}
 Since the Auslander--Reiten homomorphism
  \smash{$\Upsilon = \binom{-1}{1} \colon \mathbb{Z} \to
    \mathbb{Z}^2$} is injective, \thmref{1} can be applied. 
  We regard elements in $R \oplus \m$ as column vectors.  Let $\alpha
  = h1_\m \in \Aut_R(\m)$, where \mbox{$h \in
    k[\hspace*{-1.4pt}[T]\hspace*{-1.4pt}]^*$}, be given. Write $h =
  f+gT$ for some $f \in R^*$ and $g \in R$. It is straight\-forward to
  verify that there is a commutative diagram,
  \begin{displaymath}
    \xymatrix@C=4.5pc{
      0 \ar[r] & \m \ar[d]_{\cong}^-{\gamma \,=\, (f-gT)1_\m} \ar[r]^-{(1 \ \, -T)^t}
      & R \oplus \m \ar[d]_{\cong}^-{\text{\scriptsize 
      $\beta =\! 
      \begin{pmatrix}
        f \!\! & \!\! g \\
        gT^2 \!\! & \!\! f        
      \end{pmatrix}
      $}}
 \ar[r]^-{(T^2 \ \, T)} & \m \ar[d]_{\cong}^-{\alpha \,=\, (f+gT)1_\m} \ar[r] & 0\;\phantom{.} \\
      0 \ar[r] & \m \ar[r]_-{(1 \ \, -T)^t}
      & R \oplus \m \ar[r]_-{(T^2 \ \, T)} & \m \ar[r] & 0\;.
    }
  \end{displaymath}
  Note that $\beta$ really is an automorphism; indeed, its inverse is
  given by
  \begin{displaymath}
    \beta^{-1} = (f^2-g^2T^2)^{-1}
      \begin{pmatrix}
        f \!\! & \!\! -g \\
        -gT^2 \!\! & \!\! f        
      \end{pmatrix}.
  \end{displaymath}
  We now apply the tilde construction \resref{tilde} to 
  $\alpha$, $\beta$, and $\gamma$; by \exaref{diag} we get:
  \begin{displaymath}
    \tilde{\alpha} = 
    \begin{pmatrix}
      1 & 0 \\
      0 & f+gT
    \end{pmatrix}
    \,, \quad
    \tilde{\beta} = \beta
    , \quad \text{and} \quad
    \tilde{\gamma} = 
    \begin{pmatrix}
      1 & 0 \\
      0 & f-gT
    \end{pmatrix}.
  \end{displaymath}
  In view of \dfnref{Xi} and \rmkref{remark-to-definition-Xi}, the
  subgroup $\Xi$ of $\Aut_R(R \oplus \m)_\mathrm{ab}$ is therefore
  generated by all the elements
  \begin{displaymath}
    \xi_h := \tilde{\alpha}\tilde{\beta}^{-1}\tilde{\gamma}
    = (f^2-g^2T^2)^{-1}
    \begin{pmatrix}
      f & -g(f-gT) \\
      -gT^2(f+gT) & f(f^2-g^2T^2)
    \end{pmatrix}.
  \end{displaymath}
  
  Denote by $\omega$ the composite of the isomorphisms,
  \begin{displaymath}
    \xymatrix@C=3pc{
    \Aut_R(R \oplus \m)_\mathrm{ab} \ar[r]^-{\delta}_-{\cong} &
    k^* \oplus \Aut_R(\m) \ar[r]^-{1\, \oplus\, \chi^{-1}}_-{\cong} & 
    k^* \oplus k[\hspace*{-1.4pt}[T]\hspace*{-1.4pt}]^*\;,
    }
  \end{displaymath}
  where $\delta$ is the isomorphism from \prpref{Aut-ab-exa}.  Note
  that $\delta(\xi_h) = ([f]_\m,1_\m) = (h(0),1_\m)$ and hence
  $\omega(\xi_h) = (h(0),1)$. It follows that $\omega(\Xi) = k^*
  \oplus \{1\}$ and thus $\omega$ induces a group
  isomorphism,
  \begin{displaymath}
    \overline{\omega} \colon
    \Aut_R(R \oplus \m)_\mathrm{ab}/\Xi \mspace{1mu}
    \stackrel{\cong}{\longrightarrow}
    (k^* \oplus k[\hspace*{-1.4pt}[T]\hspace*{-1.4pt}]^*)/\omega(\Xi) =
    \mspace{1mu} k[\hspace*{-1.4pt}[T]\hspace*{-1.4pt}]^*\;.
  \end{displaymath}
  In view of this isomorphism, \thmref{1} shows that
  \mbox{$\K_1(\mod\,R) \cong
    k[\hspace*{-1.4pt}[T]\hspace*{-1.4pt}]^*$}.  \thmref{1} also
  asserts that $\K_1(\mathrm{inc})$ may be identified with the
  homomorphism
  \begin{displaymath}
    \lambda \colon R^* \longrightarrow \Aut_R(R \oplus \m)_{\mathrm{ab}}/\Xi
    \qquad \text{given by} \qquad
    f \longmapsto
    \begin{pmatrix}
      f1_R & 0 \\
      0 & 1_\m
    \end{pmatrix}.
  \end{displaymath}
  It remains to note that the isomorphism $\overline{\omega}$ identifies
  $\lambda$ with the inclusion map $\mu$ described in the example,
  indeed, one has $\overline{\omega}\lambda = \mu$.
\end{proof}

\section*{Acknowledgements}

It is a pleasure to thank Peter J{\o}rgensen without whom this paper
could not have been written. However, Peter did not wish to be a
coauthor of this manuscript. We are also grateful to Marcel
B{\"o}kstedt and Charles A. Weibel for
valuable input on the Gersten--Sherman transformation, and to
Christian U.~Jensen for pointing out \prpref{char}. \new{Finally, we thank Viraj Navkal and the anonymous referee for their thoughtful comments and for making us aware of the paper \cite{Navkal}.}

\def\cprime{$'$}
  \newcommand{\arxiv}[2][AC]{\mbox{\href{http://arxiv.org/abs/#2}{\sf arXiv:#2
  [math.#1]}}}
  \newcommand{\oldarxiv}[2][AC]{\mbox{\href{http://arxiv.org/abs/math/#2}{\sf
  arXiv:math/#2
  [math.#1]}}}\providecommand{\MR}[1]{\mbox{\href{http://www.ams.org/mathscinet-getitem?mr=#1}{#1}}}
  \renewcommand{\MR}[1]{\mbox{\href{http://www.ams.org/mathscinet-getitem?mr=#1}{#1}}}
\providecommand{\bysame}{\leavevmode\hbox to3em{\hrulefill}\thinspace}
\providecommand{\MR}{\relax\ifhmode\unskip\space\fi MR }
% \MRhref is called by the amsart/book/proc definition of \MR.
\providecommand{\MRhref}[2]{%
  \href{http://www.ams.org/mathscinet-getitem?mr=#1}{#2}
}
\providecommand{\href}[2]{#2}

%\bibliographystyle{amsplain} 
%\bibliography{+references}

\end{document}